\newtheorem{thm}{Theorem}[section] 
\newtheorem*{thm*}{Theorem}
\newtheorem{conj}[thm]{Conjecture}
\newtheorem*{conj*}{Conjecture}
\newtheorem{cor}[thm]{Corollary}
\newtheorem{defn}[thm]{Definition}
\newtheorem{exmpl}[thm]{Example}
\newtheorem{lem}[thm]{Lemma}
\newtheorem{prop}[thm]{Proposition}
\newtheorem{rem}[thm]{Remark}
\newtheorem{ques}[thm]{Question}
\renewcommand{\Pr}{\mathbb{P}}
\newcommand{\Z}{\mathbb{Z}}
\newcommand{\R}{\mathbb{R}}
\newcommand{\C}{\mathbb{C}}
\newcommand{\F}{\mathbb{F}}
\newcommand{\id}{\mathrm{id}}
\newcommand{\sg}[1]{\left\langle #1\right\rangle }
\newcommand{\set}[1]{\left\{ #1\right\}}
\newcommand{\sub}{\subseteq}
\newcommand{\normal}{\unlhd}
\newcommand{\stab}{\operatorname{stab}}
\newcommand{\eps}{\varepsilon}
\newcommand{\Ber}{\mathrm{Ber}}
\newcommand{\suchthat}{\mid}
\newcommand{\mul}[1]{#1^{\times}}
\newcommand{\tr}{\operatorname{tr}}
\newcommand{\mixlen}{\ell_{\mathrm{mix}}}
\newcommand\floor[1]{\left\lfloor #1\right\rfloor}
\newcommand{\Sym}[1]{\mathrm{Sym}({#1})}
\newcommand{\Alt}[1]{\mathrm{Alt}({#1})}
\newcommand{\Dih}[1]{\mathrm{Dih}({#1})}
\newcommand{\Cox}[2]{\mathsf{#1}_{#2}}
\def\eg{{e.g. }}
\newcommand\Cref[1]{{Corollary~\ref{#1}}}
\newcommand\Cjref[1]{{Conjecture~\ref{#1}}}
\newcommand\Eref[1]{{Example~\ref{#1}}}
\newcommand\Lref[1]{{Lemma~\ref{#1}}}
\newcommand\Pref[1]{{Proposition~\ref{#1}}}
\newcommand\Qref[1]{{Question~\ref{#1}}}
\newcommand\Rref[1]{{Remark~\ref{#1}}}
\newcommand\Sref[1]{{Section~\ref{#1}}}
\newcommand\Tref[1]{{Theorem~\ref{#1}}}
\newcommand\M[1][d]{{\operatorname{M}_{#1}}}
\newcommand\GL[1][d]{{\operatorname{GL}_{#1}}}
\newcommand\PGL[1][d]{{\operatorname{PGL}_{#1}}}
\newcommand\SL[1][d]{{\operatorname{SL}_{#1}}}
\newcommand\PSL[1][d]{{\operatorname{PSL}_{#1}}}
\newcommand\Zent{{\operatorname{Z}}}
\newcommand\wt{{\omega}}
\newcommand\card[1]{{\left|{#1}\right|}}
\newcommand\dimcol[2]{{[{#1}\,{:}\,{#2}]}}
\newcommand\HS{{\operatorname{HS}}}
\newcommand\normali{{\,\lhd\,}}
\newcommand\innprod[2]{{\left<#1,#2\right>}}
\newcommand\relem[2]{{\left(#2\!:\,#1\right)}}
\def\One{{\boldsymbol{1}}}
\def\twogen{{$\set{2}$-generated}}
\title{Mixability of finite groups}
\author{Gideon Amir}
\address{Bar-Ilan University, Ramat Gan 5290002, Israel}
\email{gideon.amir@biu.ac.il}
\author{Guy Blachar}
\address{Bar-Ilan University, Ramat Gan 5290002, Israel}
\email{guy.blachar@gmail.com}
\author{Subhajit Ghosh}
\address{Indian Institute of Technology Madras, Chennai 600036, India}
\email{gsubhajit@alum.iisc.ac.in}
\author{Uzi Vishne}
\address{Bar-Ilan University, Ramat Gan 5290002, Israel}
\email{vishne@math.biu.ac.il}
\thanks{UV and GB acknowledge support by an Israel Science Foundation grant \#1994/20. GA acknowledges support by an Israel Science Foundation grant \#957/20. SG acknowledges support from an Inspire Faculty Fellow research grant (IFA 23 MA 198).}
\keywords{Mixability, random subproducts, lazy products}
\subjclass[2020]{20P05, 60B15.}
\begin{document}

\begin{abstract}
Say that a finite group $G$ is mixable if a product of random elements, each chosen independently from two options, can distribute uniformly on $G$. We present conditions and obstructions to mixability. We show that $2$-groups, the symmetric groups, the simple alternating groups, several matrix and sporadic simple groups, and most finite Coxeter groups, are mixable. We also provide bounds on the mixing length of such groups.
\end{abstract}
\maketitle

\section{Introduction}\label{intro}

Card shuffling, and more generally the study of mixing times of random walks on groups, has attracted considerable attention in the past decades. This has led to the development of powerful tools for the analysis of Markov chains on groups, including connections to representation theory, Fourier analysis, and spectral graph theory.

In the current paper we introduce a new notion of group mixing, which we call \emph{mixability} of groups. We say that a finite group~$G$ is {\nobreak\textbf{mixable}} if there
are pairs of elements $a_{i}^{(0)}, a_{i}^{(1)} \in G$, and independent Bernoulli variables $\eps_1,\dots,\eps_k$, such that the product $a_{1}^{(\eps_1)}\cdots a_{k}^{(\eps_k)}$ distributes uniformly on $G$.
We further define the \textbf{mixing length} of~$G$, denoted $\mixlen(G)$, to be the minimal length of such a random product. Similar to random walks, here the randomness comes from choosing the steps at random. We note immediately that by conjugating, we may assume each $a_{i}^{(0)} = 1$ (see \Rref{samedef}).
Writing $a_i = a_i^{(1)}$, the steps become fixed and the randomness of the product
$g_1^{\eps_1}\cdots g_k^{\eps_k}$
comes from choosing whether or not to take each step.

\medskip

There are numerous works on mixing a group via random walks. This goes back to a work of Erd\H{o}s and R\'{e}nyi~\cite{ErdosRenyi65}, who studied random products of the above form where $\eps_1,\dots,\eps_k$ are assumed to be i.i.d.\ $\Ber(\frac{1}{2})$ random variables. Their work shows that when $G$ is abelian and one chooses the elements $g_1,\dots,g_k$ at random, then the distribution of the product $g_1^{\eps_1} \cdots g_k^{\eps_k}$ is approximately uniform once $k\ge 2\log_2\left|G\right|+O(1)$. We adopt their terminology, and refer to products of the form $g_1^{\eps_1}\cdots g_k^{\eps_k}$ as \textbf{random subproducts}, even when the variables $\eps_1,\dots,\eps_k$ are not uniform. Approximation of the distribution is not without flaws; for example it was noticed in \cite{UV98} that a group can be mixed in this sense with some conjugacy classes never touched. What determines the nature of mixability is that we ask the distribution of the random subproduct to be {\it{precisely}} uniform on~$G$.

Our work is largely inspired by a paper of Angel and Holroyd \cite{AH}. Their paper studies mixings of the symmetric group $\Sym{n}$ by ``lazy transpositions'', i.e., the mixability and mixing length of $\Sym{n}$ when the elements $g_1,\dots,g_k$ are taken to be transpositions. The authors present several mixings of $\Sym{n}$ of length $\binom{n}{2}$ using transpositions, and show that this is optimal if one uses only adjacent transpositions. A paper of Groenland, Johnston, Radcliffe and Scott~\cite{GroenlandJohnstonRadcliffeScott22} improves the above upper bound to $\le\frac{2}{3}\binom{n}{2}+O(n\log n)$, which is the best upper bound known so far. The only known lower bound on the length of such a mixing is $\log_2\left|\Sym{n}\right|\approx n\log_2 n$, coming from a trivial counting argument.

A refinement of the mixability of $\Sym{n}$ has also been studied by Janzer, Johnson and Leader~\cite{JanzerJohnsonLeader23}. They investigate the length of partial mixings of $\Sym{n}$ using lazy transpositions, where the mixing only needs to mix the action of $\Sym{n}$ on $k$-tuples rather than be a random uniform permutation.\medskip

In the current paper, we are concerned with the mixability of arbitrary finite groups. Our main question is the following:

\begin{ques}\label{ques:main}
  Which finite groups are mixable?
\end{ques}

We show in \Sref{sec:basic-defs} that the class of mixable groups is closed under tanking quotients and extensions. We demonstrate throughout the paper many examples of mixable groups, including the following:

\begin{thm*}
  The following families of finite groups are mixable:
  \begin{enumerate}
    \item\label{item:main-thm1} $2$-groups (\Cref{cor:2-groups}).
    \item\label{item:main-thm2} $\Sym{n}$ (\Eref{exmpl:Sn-mix}).
    \item\label{item:main-thm3} $\Alt{n}$ for $n\ge 5$ (\Cref{cor:alternating}).
        \item\label{item:main-thm6} All finite irreducible Coxeter groups, with the possible exception of those of types $\Cox{E}{6}$, $\Cox{E}{7}$ and $\Cox{E}{8}$ (\Tref{thm:coxeter}).
    \item\label{item:main-thm4}
    \begin{enumerate}
        \item $\PSL[2](\F_{2^n})$ for all $n\ge 1$ (\Tref{thm:PSL2-F2n}).
        \item $\SL[d](\F_q)$, $\PSL[d](\F_q)$ and $\PGL[d](\F_q)$ if $q-1$ is a power of $2$ (\Cref{cor:matrix-all-d}).
    \item $\GL[d](\F_q)$ if and only if $q-1$ is a power of $2$ (\Cref{cor:GLd}).
    \end{enumerate}
    \item\label{item:main-thm5} The Mathieu groups $M_{11},M_{12},M_{22},M_{23},M_{24}$ (\Cref{cor:mathieu}) and the Higman--Sims group $\HS$ (\Cref{cor:higman-sims}).
  \end{enumerate}
\end{thm*}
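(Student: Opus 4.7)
My plan is to prove each family in turn, relying on a common closure machinery where it applies and on direct constructions otherwise. The key framework, established in \Sref{sec:basic-defs}, is that mixability is closed under quotients and group extensions: if $N \normal G$ with both $N$ and $G/N$ mixable, then so is $G$, with $\mixlen(G) \le \mixlen(N) + \mixlen(G/N)$. This reduces composite groups to ``building blocks,'' but fails for simple groups, which demand explicit mixing sequences.

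For item (\ref{item:main-thm1}), the $2$-groups, I would iterate the extension closure along a composition series with factors $\Z/2\Z$. Each such factor is mixable via a single $\Ber(1/2)$ step, yielding $\mixlen(G) \le \log_2 |G|$, matching the trivial entropy lower bound. For item (\ref{item:main-thm2}), I would construct a mixing of $\Sym{n}$ via an explicit sequence of lazy transpositions, in the spirit of Angel--Holroyd: inductively build uniformity on $\Sym{k}$ from uniformity on $\Sym{k-1}$ by appending a short sequence of carefully weighted Bernoulli transpositions $(j,k)$ whose aggregate effect uniformly distributes the image of $k$. For item (\ref{item:main-thm3}), the extension closure fails because $\Alt{n}$ is simple for $n\ge 5$; I would instead construct a direct mixing using $3$-cycles, adapting the $\Sym{n}$ construction with a sign-correcting device to stay inside $\Alt{n}$, and mixing the $n$ cosets of $\Alt{n-1}$ in $\Alt{n}$.

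For item (\ref{item:main-thm6}), the Coxeter groups, I would proceed by classification. Type $\Cox{A}{n-1}$ is $\Sym{n}$. Types $\Cox{B}{n}$ and $\Cox{D}{n}$ are (or sit as index-$2$ subgroups of) $(\Z/2\Z)^n \rtimes \Sym{n}$, an extension of a mixable $2$-group by a mixable symmetric group. The dihedral groups $\Cox{I}{2}(m)$ are extensions of a cyclic group by $\Z/2\Z$, resolved case by case in $m$. The small exceptional types $\Cox{H}{3}, \Cox{H}{4}, \Cox{F}{4}, \Cox{G}{2}$ admit explicit or computer-aided constructions; $\Cox{E}{6}, \Cox{E}{7}, \Cox{E}{8}$ remain open. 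For item (\ref{item:main-thm4}), the matrix groups, the hypothesis that $q-1$ is a power of $2$ makes the torus $\F_q^{\times}$ a mixable $2$-group, and the Weyl group is a symmetric group; I would combine these via a Bruhat-type decomposition with a tailored construction for the unipotent part. For $\PSL[2](\F_{2^n})$ a cleaner approach uses the action on $\mathbb{P}^1(\F_{2^n})$ and the Borel--opposite decomposition. The ``only if'' in the $\GL[d]$ characterization follows because the determinant map $\GL[d](\F_q)\to\F_q^{\times}$ is a surjection, so mixability of $\GL[d](\F_q)$ would imply mixability of $\F_q^{\times}$; but cyclic mixable groups must have $2$-power order.

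For item (\ref{item:main-thm5}), I would leverage the highly transitive permutation actions of the Mathieu groups on $11$--$24$ points to transport an $\Sym{n}$-style construction into the group via suitable involutions; for $\HS$, the construction would be more delicate and likely computer-assisted via a chain of known subgroups. The main obstacle I expect is item (\ref{item:main-thm4}): Bruhat cells are not subgroups, so the extension closure does not apply directly, and one must carefully mix across cells while tracking coset uniformity, especially when the unipotent radical is a $p$-group for odd $p$ and hence not itself mixable. The simple-group cases in items (\ref{item:main-thm3}) and (\ref{item:main-thm5}) present a secondary difficulty: they require genuinely direct constructions without any normal-subgroup reduction to exploit.
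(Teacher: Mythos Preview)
Your overall architecture matches the paper's: extension closure handles the composite cases, and explicit constructions are needed for the simple ones. Items (\ref{item:main-thm1}) and (\ref{item:main-thm2}) are essentially what the paper does.

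There is, however, a genuine gap in your treatment of item (\ref{item:main-thm3}). You propose to mix the $n$ cosets of $\Alt{n-1}$ in $\Alt{n}$ and then appeal to mixability of $\Alt{n-1}$. But this induction has no base: $\Alt{4}$ is \emph{not} mixable (it surjects onto $\Alt{3}\cong\Z/3\Z$), so the case $n=5$ collapses. The paper sidesteps this by mixing the action of $\Alt{n}$ on ordered pairs, whose point stabilizer is $\Alt{n-2}$; the trick is that $\Alt{n-2}$ sits inside a copy of $\Sym{n-2}\le\Alt{n}$, and $\Sym{n-2}$ is already known to be mixable. This lets one invoke \Pref{prop:H-and-G/H} with $H=\Sym{n-2}$ rather than with the non-mixable $\Alt{n-2}$ or $\Alt{n-1}$.

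A similar issue lurks in item (\ref{item:main-thm6}) for the dihedral groups $\Cox{I}{2}(m)$ with $m$ odd: the cyclic normal subgroup $\Z/m\Z$ is not mixable, so extension closure does not apply, and ``case by case in $m$'' is not a proof. The paper handles this via representation theory: all irreducibles of $\Dih{m}$ have dimension $\le 2$, and every potentially mixable $2$-dimensional representation is shown to be mixable in at most three steps (\Cref{cor:mix-2-dim-real}, \Pref{prop:abelian-Z2}). Your outline gives no mechanism for this.

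For items (\ref{item:main-thm4}) and (\ref{item:main-thm5}) your approach diverges from the paper's, and you correctly flag the Bruhat-cell obstacle without resolving it. The paper's unifying tool here is \Cref{cor:2-transitive}: if $G$ acts $2$-transitively on $X$ and a point stabilizer is mixable, then $G$ is mixable. For $\PSL[d](\F_q)$ this reduces the problem to the parabolic $\F_q^{d-1}\rtimes(\GL[d-1](\F_q)/\mu_d(\F_q)I)$, which is handled by induction on $d$ together with \Pref{prop:mix-spe-semi}; for the Mathieu groups and $\HS$ the stabilizers are $\Alt{6}$, $\PSL[3](\F_4)$, or smaller Mathieu groups, all previously established. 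No computer assistance is used. Your Bruhat proposal might be made to work, but as written it is a plan rather than an argument, and the odd-characteristic unipotent radical (a non-mixable $p$-group) is precisely the obstruction the paper's $2$-transitivity machinery is designed to avoid.
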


Conversely, we show:

\begin{thm*}[{\Tref{thm:odd-order}}]
  A finite group with a nontrivial quotient of odd order is not mixable.
\end{thm*}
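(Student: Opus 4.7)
My plan is to reduce to the case of a nontrivial finite group of odd order and then derive a contradiction via a determinant computation in the complex group algebra. Since mixability descends to quotients (as established earlier in the excerpt), it suffices to show that a nontrivial finite group $H$ with $|H|$ odd is not mixable: if $G$ has such a quotient $H$ and $G$ were mixable, so would $H$ be.

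Suppose toward contradiction that $H$ is mixable, witnessed by elements $g_1, \dots, g_k \in H$ and independent Bernoullis $\eps_i \sim \Ber(p_i)$ such that $g_1^{\eps_1}\cdots g_k^{\eps_k}$ is uniform on $H$. I would encode the distribution as
\[
\mu := \prod_{i=1}^k \bigl((1-p_i)\cdot 1 + p_i \cdot g_i\bigr) \in \C[H],
\]
so that mixability is the statement $\mu = \frac{1}{|H|}\sum_{h\in H} h$. Choose any nontrivial irreducible representation $\rho\colon H \to \GL[d](\C)$, which exists because $|H|>1$; applying $\rho$ annihilates the right-hand side (by orthogonality of characters), giving
\[
\prod_{i=1}^k \bigl((1-p_i)I + p_i\rho(g_i)\bigr) = 0.
\]

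The crux is to show each factor $M_i := (1-p_i)I + p_i\rho(g_i)$ is invertible, contradicting vanishing of the product. Since $|H|$ is odd, every $g_i$ has odd order, so the eigenvalues of the unitary matrix $\rho(g_i)$ are odd roots of unity; in particular, none equals $-1$. For any such eigenvalue $\lambda$, the factor $(1-p_i) + p_i\lambda$ in $\det M_i$ vanishes only if $\lambda = (p_i-1)/p_i$, which for $p_i \in (0,1]$ is a non-positive real; combined with $|\lambda|=1$ this forces $\lambda=-1$, a contradiction. (The boundary cases $p_i \in \{0,1\}$ give $M_i = I$ or $M_i = \rho(g_i)$, both invertible.) Hence $M_i$ is invertible for every $i$, the product is invertible and nonzero, and the contradiction is reached. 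The main insight — really the only nontrivial part of the argument — is choosing the right framework: work in $\C[H]$ and use that odd-order elements admit no eigenvalue $-1$ under any complex representation; the range $p_i \in [0,1]$ then confines any zero eigenvalue-combination to the real axis, and oddness of $|H|$ closes the door.
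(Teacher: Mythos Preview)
Your argument is correct. It is, however, not the proof the paper gives at this point: the paper's primary proof (in Section~3) is purely elementary and avoids representation theory entirely. There one assumes a minimal-length mixing sequence, lets $\mu$ be the (non-uniform) distribution of the product of the first $k-1$ factors, and uses the recursion $(1-p)\mu(g)+p\,\mu(gg_k^{-1})=\frac{1}{|H|}$ to show that the sign of $\mu(ag_k^{-m})-\frac{1}{|H|}$ alternates with $m$; since $g_k$ has odd order this yields a contradiction. Your approach instead passes to $\C[H]$, applies a nontrivial irreducible representation, and observes that each factor $(1-p_i)I+p_i\rho(g_i)$ is invertible because no eigenvalue of $\rho(g_i)$ can be $-1$ when $|H|$ is odd. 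This is exactly the alternative route the paper later records in Section~5 (see \Lref{lem:reps-sing} and \Cref{cor:reps--1-ev}), so you have independently found their second proof. The elementary proof has the virtue of being self-contained and not invoking Maschke/Schur; your representation-theoretic proof, on the other hand, immediately generalizes to the stronger statement that in \emph{any} mixing sequence the first and last steps must use elements of even order with probability $\tfrac12$, which the paper also extracts from this viewpoint.
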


We prove mixability by several techniques. The mixability of $2$-groups follows directly by considering their composition series. We introduce a notion of mixability for group actions that generalizes group mixability, and use it to present explicit mixings of the symmetric and alternating groups.

Next, we study mixability using representation theory. A finite group~$G$ is mixable if and only if in the group algebra $\C[G]$, the central idempotent $\frac{1}{\left|G\right|}\sum_{g\in G}g$ can be decomposed as a product of the form
\[
    \frac{1}{\left|G\right|}\sum_{g\in G}g = \prod_{i=1}^k((1-p_i)e_G+p_ig_i)
\]
for some $g_1,\dots,g_k\in G$ and $0\le p_1,\dots,p_k\le 1$ (see \Sref{sec:rep-theory} for further details). Special decompositions of this idempotent have previously been used to study questions of similar flavour, such as in the work of Diaconis and Shahshahani \cite{DiaconisShahshahani86}. This approach provides an alternate proof of \Tref{thm:odd-order} (see \Cref{cor:reps--1-ev} and the discussion afterwards), as well as to study the mixability of groups with low-dimensional representations, such as the dihedral groups $\Dih{n}$. We also investigate the mixability of finite Coxeter groups using their classification and a combination of the above approaches.\medskip

We show that when a group acts $2$-transitively on a set, and the point stabilizer is mixable, then the whole group is mixable (\Cref{cor:2-transitive}). We use this to prove that $\SL[d](\F_q)$ and $\PSL[d](\F_q)$ are mixalbe when $q-1$ is a power of~$2$, as well as the Mathieu groups and the Higman--Sims group.

We conjecture that \Tref{thm:odd-order} is the only obstruction to mixability:

\begin{conj*}[{\Cjref{conj:mix-quotients}}]\label{conj:main-conj}
  A finite group $G$ is mixable if and only if it has no nontrivial quotients of odd order.
\end{conj*}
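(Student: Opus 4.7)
Only the ``if'' direction requires work, as \Tref{thm:odd-order} already supplies the converse. My plan is to induct on $\left|G\right|$. Let $G$ have no nontrivial quotient of odd order. For any proper nontrivial normal subgroup $N\normal G$, the quotient $G/N$ also has no nontrivial quotient of odd order (any such would lift to one of $G$) and is mixable by induction; hence, using that mixability is closed under extensions, it suffices to make $N$ mixable too. The catch is that $N$ need not inherit the hypothesis: $N$ may have odd-order quotients even when $G$ does not, so the reduction cannot simply iterate.

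To make the reduction bite, I would pass to a minimal normal subgroup $N$ of $G$, which is a direct product of copies of a simple group $S$. When $S=C_2$, $N$ is a $2$-group and mixable by \Cref{cor:2-groups}. When $S$ is non-abelian simple, $\left|S\right|$ is even by Feit--Thompson, and the problem reduces to showing that every non-abelian finite simple group is mixable; a direct product of such is then mixable by extension closure. My plan here is a case-by-case attack using the classification of finite simple groups, extending the tools of the paper (the $2$-transitive stabilizer method \Cref{cor:2-transitive}, explicit subproducts built from involutions, and the representation-theoretic idempotent decompositions of \Sref{sec:rep-theory}) to the remaining sporadic groups, the exceptional Lie-type groups, and the classical groups in odd characteristic, where $q-1$ is not a power of $2$ and new ideas beyond \Cref{cor:matrix-all-d} would be required.

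The principal obstacle is the remaining case: every minimal normal subgroup $N$ of $G$ is elementary abelian of odd prime order. Here extension-closure fails outright, since $N\cong(\Z/p\Z)^k$ is itself not mixable. One must instead construct the decomposition $\frac{1}{\left|G\right|}\sum_{g\in G}g=\prod_i((1-p_i)e_G+p_ig_i)$ in $\C[G]$ globally. The assumption that $G$ is generated by the $G$-conjugates of a Sylow $2$-subgroup translates, by an argument dual to \Cref{cor:reps--1-ev}, into the statement that every nontrivial irreducible $\C$-representation of $G$ admits some element whose image has $-1$ in its spectrum. I would try to exploit involutions acting nontrivially on $N$ by conjugation---their linearized action on each irreducible $G$-constituent of $N$ should realize $-1$ as an eigenvalue---to annihilate $N$-supported irreps, while handling irreps trivial on $N$ via the mixability of $G/N$. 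The main difficulty is to show that these two partial mixings can be assembled into a single coherent random subproduct killing every nontrivial irrep of $G$ simultaneously; I expect this step to require either a new structural result on groups with no nontrivial odd quotient, or a probabilistic/approximation argument in the group algebra that upgrades the pointwise spectrum data into a global factorization.
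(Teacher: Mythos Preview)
This statement is posed in the paper as a \emph{conjecture}, not a theorem; the paper does not prove it and explicitly leaves it open. There is thus no paper proof to compare against, and your proposal is a research plan rather than a proof --- as you yourself acknowledge in the final paragraphs.

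As a strategy outline, your reduction via a minimal normal subgroup $N$ is correct: when $N$ is a $2$-group or a product of non-abelian simples, extension closure (\Cref{cor:N-and-G/N}) plus induction reduces the question to mixability of all non-abelian finite simple groups, and you correctly identify the remaining obstruction when every minimal normal subgroup is elementary abelian of odd order. Both of these are genuinely open; the paper does not resolve them either. Your proposed global attack on the odd-socle case via the eigenvalue $-1$ criterion is exactly the content of \Pref{prop:equiv-conj} and \Cjref{conj:mix-representations}, and the ``assembling into a single coherent random subproduct'' step you flag as the main difficulty is precisely where the paper stops.

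The paper packages the reduction differently. Rather than splitting on the isomorphism type of a minimal normal subgroup, \Sref{sec:struc} constructs for any $2'$-simple $G$ a subnormal series whose successive quotients are \twogen\ (generated by involutions), and thereby shows the conjecture is equivalent to ``every finite \twogen\ group is mixable'' (\Cjref{newconj}). This reformulation folds your two obstacles into a single class --- a \twogen\ group may well have odd elementary abelian socle, e.g.\ $\Sym{3}$ or $\Alt{5}$ --- so it does not isolate your third case separately, but it does not eliminate it either. Neither your approach nor the paper's yields a proof; the conjecture remains open.
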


In particular, we conjecture that the finite simple groups are all mixable.
Evidence supporting this conjecture comes from analyzing low-dimensional real representations of groups.\medskip

Other than studying which groups are mixable, we are also interested in estimating the mixing length of the mixable groups. For any finite group $G$ the entropy bound $$\mixlen(G)\ge\log_2\left|G\right|,$$ follows from any random subproduct of length $k$ being supported on at most $2^k$ values (\Rref{rem:entropy-bound}). We show that this bound is tight for $2$-groups (\Cref{cor:2-groups}), and that it is tight for $\Sym{n}$ and $\Alt{n}$ up to a factor of $\frac{3}{2}$ (see \Tref{cor:mixlen-sym} and \Cref{cor:alternating}). These examples lead us to ask:

\begin{ques}\label{ques:mixlen-bound}
  Is the bound $\mixlen(G)\ge\log_2\left|G\right|$ tight up to some universal constant? That is, does there exist a constant $C>0$ such that, for any finite mixable group $G$, one has $\mixlen(G)\le C\log_2\left|G\right|$?
\end{ques}

One interesting case is the mixing length of the dihedral groups~$\Dih{n}$. We show that if $n=2^tm$ with $m$ odd, then $\mixlen(\Dih{n})\le t+m$, by studying their representations (see \Cref{cor:dihedral}). As the trivial lower bound is $t+\log_2 m+1$, this is a good test case for \Qref{ques:mixlen-bound}.

\subsection*{Structure of the paper.}

In \Sref{sec:basic-defs} we study basic properties of mixability of groups and group actions. We use these ideas in \Sref{sec:basic-examples} to show that $2$-groups are mixable, while nontrivial groups of odd order are not. We then deduce that an abelian group is mixable if and only if it is a $2$-group.

\Sref{sec:perm-groups} is dedicated to study the symmetric and alternating groups. We show that they are mixable (other than $\Alt{3}$ and $\Alt{4}$) by providing an explicit mixing sequence, and that their mixing length is close to optimal.

We study the connection of mixability to representation theory in \Sref{sec:rep-theory}, showing that it is enough to check the mixability of each irreducible representation of the group. In \Sref{sec:struc} we define a group to be $2'$-simple if it has no odd order quotients, and \twogen\ if it is generated by involutions. We prove that
a finite group $G$ is $2'$-simple if and only if it has a subnormal series with \twogen\ quotients. This allows us to reduce \Cjref{conj:mix-quotients} to \twogen\ groups, which is, admittedly, a very large family, including by Feit-Thompson all simple groups.
In \Sref{sec:8} we provide evidence for this conjecture by studying low-dimensional real representations. In \Sref{sec:coxeter} we show that most finite Coxeter groups are mixable, combining the above ideas and following the classification of finite Coxeter groups.

The proof that $\Sym{n}$ is mixable motivates an approach to mixability of $2$-transitive groups, which we study in \Sref{sec:2-transitive}.
We apply our criterion to several examples of finite simple groups. We conclude by studying classical examples, namely matrix groups and the small sporadic simple groups in \Sref{sec:10}~and~\ref{sec:11}.

\section{Mixability of groups and of group actions}\label{sec:basic-defs}

We now define the notions of mixability for finite groups and for group actions, and study their basic properties.

\begin{defn}\label{def:mixability}
  Let $G$ be a finite group.
  \begin{enumerate}
    \item A \textbf{random subproduct} in $G$ is a product $g_1^{\eps_1}\cdots g_k^{\eps_k}$, where $g_1,\dots,g_k\in G$ are fixed elements, and $\eps_1,\dots,\eps_k$ are independent Bernoulli random variables $\eps_i\sim\Ber(p_i)$.
    \item We say that $G$ is \textbf{mixable} if there exists a random subproduct $g_1^{\eps_1}\cdots g_k^{\eps_k}$ in $G$ that is distributed uniformly on $G$. In this case, we say that $\relem{g_1}{p_1},\dots,\relem{g_k}{p_k}$ is a \textbf{mixing sequence} of~$G$.
    \item We define the \textbf{mixing length} of $G$, denoted $\mixlen(G)$, as the minimal length of a mixing sequence of $G$.
  \end{enumerate}
\end{defn}

We sometimes refer to a mixing sequence as a mixing algorithm for~$G$, i.e., sequentially applying the elements $g_1,\dots,g_n$ with probabilities $p_1,\dots,p_n$.

In the introduction we defined mixability slightly differently, via random choice between pairs of elements.
\begin{rem}\label{samedef}
Mixability via pairs and mixability via random subproducts are equivalent notions, with the same length function.  Indeed if $x_i$ is chosen by a Bernoulli variable $\eps_i$ from $\set{a_i,b_i}$, then
\begin{eqnarray*}
x_1\cdots x_n & = & (b_1a_1^{-1})^{\eps_1}(a_1b_2a_2^{-1}a_1^{-1})^{\eps_2}
(a_1a_2b_3a_3^{-1}a_2^{-1}a_1^{-1})^{\eps_3} \cdots \\
& & \qquad \cdot (a_1\cdots a_{n-1}b_na_n^{-1}\cdots a_1^{-1})^{\eps_n}a_1\cdots a_n,
\end{eqnarray*}
which is a random subproduct times a constant element.
\end{rem}

\begin{rem}\label{rem:entropy-bound}
  For any finite group $G$ we have $\mixlen(G)\ge\log_2\left|G\right|$, since the support of any random subproduct of length $k$ is of size at most $2^k$.
\end{rem}

We give two examples of mixable groups with explicit mixing sequences: cyclic $2$-groups and the symmetric groups.

\begin{exmpl}\label{exmpl:Z-mod-pow2}
  The cyclic group $\Z/2^d\Z$ is mixable for all $d\ge 1$. Indeed, the following is a mixing sequence of $\Z/2^d\Z$:
  \[
\relem{1}{{\textstyle\frac{1}{2}}}, \relem{2}{{\textstyle\frac{1}{2}}}, \relem{4}{\textstyle\frac{1}{2}}, \dots, \relem{2^{d-1}}{\textstyle\frac{1}{2}}.
  \]
\end{exmpl}

\begin{exmpl}\label{exmpl:Sn-mix}
  As shown by Angel and Holroyd \cite{AH}, and was already observed by Hall, Puder and Sawin \cite[Remark~4.7]{HallPuderSawin18}, the symmetric group $\Sym{n}$ is mixable for all $n\ge 1$. Indeed, the following algorithm mixes $\Sym{n}$ inductively:
  \begin{enumerate}
      \item Mix $\Sym{n-1}\le \Sym{n}$.
      \item Mix the position of $n$ by applying $(i-1,i)$ with probability $\frac{i-1}{i}$ for $i=n,n-1,\dots,2$.
  \end{enumerate}
  The induced mixing sequence is
  \[
    \relem{(1,2)}{{\textstyle\frac{1}{2}}},\relem{(2,3)}{\textstyle\frac{2}{3}},
    \relem{(1,2)}{{\textstyle\frac{1}{2}}},\relem{(3,4)}{\textstyle\frac{3}{4}},
    \relem{(2,3)}{{\textstyle\frac{2}{3}}},\relem{(1,2)}{\textstyle\frac{1}{2}},\dots
  \]
  One easily sees that the length of this mixing is $\binom{n}{2}$. In \Sref{sec:perm-groups} we will show that there exists a mixing of $\Sym{n}$ of length $\frac{3}{2}\log_2(n!)+\frac{1}{2}n$ which uses involutions rather than only transpositions.
\end{exmpl}

We extend the notion of mixability to group actions:

\begin{defn}
  Let $G$ be a finite group acting transitively on a set~$X$. We say that the action of $G$ on $X$ is \textbf{mixable} if there exists a random subproduct $g_1^{\eps_1}\cdots g_k^{\eps_k}$ in $G$ such that $g_1^{\eps_1}\cdots g_k^{\eps_k}x_0$ is distributed uniformly on $X$, where $x_0 \in X$ is some fixed point. In this case, we say that $\relem{g_1}{p_1},\dots,\relem{g_k}{p_k}$ is a \textbf{mixing sequence} of the action with respect to $x_0$. The minimal length of a mixing sequence is the \textbf{mixing length} of the action.
\end{defn}

By transitivity, if an action of $G$ on $X$ is mixable with respect to some $x_0\in X$, then it is mixable with respect to every $x\in X$. Indeed, let~$x\in X$, and fix $g\in G$ such that $gx_0=x$. If $g_1^{\eps_1}\cdots g_k^{\eps_k}x_0$ is uniform on $X$, then $(gg_1g^{-1})^{\eps_1}\cdots (gg_kg^{-1})^{\eps_k}x$ is also uniform on $X$. Thus, mixability is a property of the group action, regardless of the base point.

This notion indeed generalizes the definition of mixability of groups:

\begin{rem}
    A group $G$ is mixable if and only if the regular action of~$G$ on itself is mixable.
\end{rem}

For the converse, we begin with the following lemma:

\begin{lem}\label{lem:act-of-uniform}
  Suppose that $G$ acts transitively on a set $X$. If $\boldsymbol{g}$ is a random variable distributed uniformly on $G$, then for any $x\in X$, the random variable $\boldsymbol{g}x$ is distributed uniformly on $X$.
\end{lem}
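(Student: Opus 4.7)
The plan is to invoke the orbit–stabilizer theorem directly. Fix $x \in X$, and for each target $y \in X$ I want to compute $\Pr(\boldsymbol{g}x = y)$ and show it equals $\frac{1}{|X|}$, independently of $y$.

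First I would identify the preimage $S_y = \set{g \in G \suchthat gx = y}$. By transitivity, since $y$ lies in the single $G$-orbit of $x$, there is at least one $h \in G$ with $hx = y$, and then $S_y = h\cdot\stab(x)$ is a left coset of the point stabilizer. In particular $\card{S_y} = \card{\stab(x)}$, the same value for every $y \in X$. Using uniformity of $\boldsymbol{g}$, we get
\[
    \Pr(\boldsymbol{g}x = y) \;=\; \frac{\card{S_y}}{\card{G}} \;=\; \frac{\card{\stab(x)}}{\card{G}}.
\]

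Next I would apply the orbit–stabilizer theorem: since the action is transitive, the orbit of $x$ is all of $X$, so $\card{G} = \card{X}\cdot\card{\stab(x)}$. Substituting yields $\Pr(\boldsymbol{g}x = y) = 1/\card{X}$ for every $y$, which is exactly the statement that $\boldsymbol{g}x$ is uniform on $X$.

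There is essentially no obstacle here; the only thing to be mindful of is making sure the statement is phrased for an arbitrary $x \in X$ (not just a preferred basepoint), but this follows immediately because the argument uses only transitivity and the orbit–stabilizer identity, both of which are symmetric in the choice of $x$.
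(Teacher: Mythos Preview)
Your proof is correct and follows essentially the same approach as the paper: identify the preimage of $y$ as the coset $h\cdot\stab_G(x)$, use uniformity to get $\card{\stab_G(x)}/\card{G}$, and finish with the orbit--stabilizer theorem together with transitivity. The paper's argument is the same computation condensed into a single displayed line.
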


\begin{proof}
  Let $y\in X$. By transitivity, there exists some $h\in G$ such that $hx=y$, and thus
  \[
    \Pr(\boldsymbol{g}x=y)=\Pr(\boldsymbol{g}\in h\cdot\stab_G(x))=\frac{\left|\stab_G(x)\right|}{\left|G\right|}=\frac{1}{\left|Gx\right|}=\frac{1}{\left|X\right|},
  \]
  where the last equality also follows from transitivity.
\end{proof}

As a direct consequence,

\begin{cor}
  If $G$ is mixable, then any transitive action of $G$ is mixable.
\end{cor}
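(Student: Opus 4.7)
The plan is to simply combine the mixability of $G$ with the preceding \Lref{lem:act-of-uniform}. Assume $G$ is mixable, so there is a mixing sequence $\relem{g_1}{p_1},\dots,\relem{g_k}{p_k}$ for $G$, meaning that the random subproduct $\boldsymbol{g} := g_1^{\eps_1}\cdots g_k^{\eps_k}$ distributes uniformly on $G$. Suppose $G$ acts transitively on a set $X$, and fix any base point $x_0\in X$.

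The claim is that the very same sequence $\relem{g_1}{p_1},\dots,\relem{g_k}{p_k}$ is a mixing sequence for the action with respect to $x_0$. Indeed, the random element $\boldsymbol{g}x_0 \in X$ is obtained by letting the uniform random variable $\boldsymbol{g}$ act on the fixed point $x_0$. By \Lref{lem:act-of-uniform}, this is uniformly distributed on $X$, which is exactly the definition of a mixing sequence of the action.

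There is no real obstacle here; the corollary is an immediate consequence of the lemma, and no new construction is needed. One small point worth mentioning is that this shows more: the mixing length of any transitive action of $G$ is at most $\mixlen(G)$, although of course a shorter mixing sequence for the action may exist in general (for instance if $\left|X\right|<\left|G\right|$, the entropy bound permits shorter sequences).
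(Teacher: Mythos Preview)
Your proof is correct and matches the paper's approach exactly: the corollary is stated there as a direct consequence of \Lref{lem:act-of-uniform}, with no separate proof given. Your additional observation that $\mixlen(G,X)\le\mixlen(G)$ is also correct and in the spirit of the surrounding results.
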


We turn to study the relation between mixability of a group and the mixability of its subgroups and quotients. We will later show (\Tref{thm:odd-order}) that $\Alt{3}$ is not mixable, even though it is a subgroup of the mixable group $\Sym{3}$. This shows that mixability does not pass to subgroups, and not even to normal subgroups.\medskip

To study quotient groups, we use the following:

\begin{prop}
  Let $G$ be a group, and let $N\normali G$ be a normal subgroup. Then the group $G/N$ is mixable if and only if the natural action of $G$ on $G/N$ is mixable, in which case $\mixlen(G/N)=\mixlen(G,G/N)$.
\end{prop}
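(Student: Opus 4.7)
The plan is to set up a natural bijective correspondence between mixing sequences of $G/N$ and mixing sequences of the action of $G$ on $G/N$ (with respect to the base point $x_0 = N$), preserving both length and Bernoulli probabilities. Both directions will rely on a single observation: for the natural action, $g \cdot N = gN$, and hence for any $g_1, \ldots, g_k \in G$,
\[
    g_1^{\eps_1} \cdots g_k^{\eps_k} \cdot N \;=\; (g_1^{\eps_1} \cdots g_k^{\eps_k})\, N \;=\; (g_1 N)^{\eps_1} \cdots (g_k N)^{\eps_k},
\]
the last equality holding because the quotient map $\pi\colon G \twoheadrightarrow G/N$ is a homomorphism and commutes with taking the $\eps_i$-th power. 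Thus the distribution of the random subproduct in $G$ applied to $N$ coincides exactly with the distribution of the corresponding random subproduct in $G/N$.

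For the forward direction, suppose $\relem{\bar{g}_1}{p_1}, \ldots, \relem{\bar{g}_k}{p_k}$ is a mixing sequence for $G/N$. Lift each $\bar{g}_i$ to an arbitrary preimage $g_i \in G$. By the displayed identity, $g_1^{\eps_1} \cdots g_k^{\eps_k} \cdot N$ is distributed uniformly on $G/N$, so $\relem{g_1}{p_1}, \ldots, \relem{g_k}{p_k}$ is a mixing sequence of the action with respect to $N$. This gives $\mixlen(G, G/N) \le \mixlen(G/N)$.

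For the reverse direction, by the paragraph following the definition of mixability for actions we may assume the mixing of the action is with respect to the base point $x_0 = N$. If $\relem{g_1}{p_1}, \ldots, \relem{g_k}{p_k}$ is such a mixing, then by the same displayed identity the random subproduct $(g_1 N)^{\eps_1} \cdots (g_k N)^{\eps_k}$ in $G/N$ is uniformly distributed, so $\relem{g_1 N}{p_1}, \ldots, \relem{g_k N}{p_k}$ is a mixing sequence of $G/N$. This gives $\mixlen(G/N) \le \mixlen(G, G/N)$, and combining the two inequalities yields both the iff and the equality of mixing lengths.

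I do not expect any real obstacle here; the only mildly delicate point is justifying the freedom to choose $x_0 = N$ as the base point in the reverse direction, which is exactly the base-point independence statement proved immediately after the definition of mixable actions.
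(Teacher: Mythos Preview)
Your proof is correct and follows essentially the same approach as the paper: both hinge on the identity $(g_1^{\eps_1}\cdots g_k^{\eps_k})N=(g_1N)^{\eps_1}\cdots(g_kN)^{\eps_k}$, from which the equivalence (and the equality of mixing lengths) is immediate. The paper's version is slightly more compressed, simply noting that the two mixability conditions are the same statement read from the two sides of this identity, while you spell out the lifting and projecting explicitly; but the content is identical.
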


\begin{proof}
  First note that for any $g_1,\dots,g_k\in G$ and Bernoulli random variables $\eps_1,\dots,\eps_k$,
  \[
    (g_1^{\eps_1}\cdots g_k^{\eps_k})N=(g_1N)^{\eps_1}\cdots(g_kN)^{\eps_k}.
  \]
  The group $G/N$ is mixable if and only if there is some choice of $g_1,\dots,g_k$ and $\eps_1,\dots,\eps_k$ such that the right hand side is uniform on~$G/N$, whereas the action of $G$ on $G/N$ is mixable (with respect to $N$) if and only if there is some choice $g_1,\dots,g_k$ and $\eps_1,\dots,\eps_k$ so the left hand side is uniform on $G/N$. Therefore, these notions are equivalent.
\end{proof}

\begin{cor}\label{cor:quotient}
Let $N\normali G$ be a normal subgroup of~$G$.
If $G$ is mixable, then $G/N$ is mixable. Furthermore, $\mixlen(G/N)\le\mixlen(G)$.
\end{cor}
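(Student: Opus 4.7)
The plan is to combine the two immediately preceding results: the corollary stating that any transitive action of a mixable group is mixable, and the proposition identifying mixability of $G/N$ with mixability of the natural action of $G$ on $G/N$ (with equal mixing lengths).

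Concretely, I would argue as follows. Suppose $G$ is mixable, so there is a mixing sequence $\relem{g_1}{p_1},\dots,\relem{g_k}{p_k}$ of $G$ with $k=\mixlen(G)$, meaning $g_1^{\eps_1}\cdots g_k^{\eps_k}$ is distributed uniformly on $G$. The coset action of $G$ on $G/N$ is transitive, so by \Lref{lem:act-of-uniform} applied to $X=G/N$ and $x=N$, the random element $(g_1^{\eps_1}\cdots g_k^{\eps_k})N$ is distributed uniformly on $G/N$. This exhibits $\relem{g_1}{p_1},\dots,\relem{g_k}{p_k}$ as a mixing sequence for the action of $G$ on $G/N$ with respect to the base point $N$, and hence $\mixlen(G,G/N)\le k=\mixlen(G)$. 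Invoking the preceding proposition, which identifies $\mixlen(G/N)$ with $\mixlen(G,G/N)$, we conclude that $G/N$ is mixable and $\mixlen(G/N)\le\mixlen(G)$.

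There is essentially no obstacle here; the statement is a formal consequence of the structural results just proved. The only thing to note is that one can equivalently give a completely direct one-line argument: the image of a mixing sequence under the quotient homomorphism is a mixing sequence of $G/N$, because $(g_1N)^{\eps_1}\cdots(g_kN)^{\eps_k}=(g_1^{\eps_1}\cdots g_k^{\eps_k})N$ and the image of a uniform distribution on $G$ under the surjection $G\twoheadrightarrow G/N$ is uniform on $G/N$.
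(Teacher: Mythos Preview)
Your proof is correct and is exactly the argument the paper has in mind: the corollary is stated without proof precisely because it follows immediately from the preceding proposition (identifying $\mixlen(G/N)$ with $\mixlen(G,G/N)$) together with \Lref{lem:act-of-uniform} applied to the coset action. Your alternative one-line version via the quotient homomorphism is equally valid and amounts to the same observation.
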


For the converse, we use the following:

\begin{prop}\label{prop:H-and-G/H}
  Let $G$ be a finite group, and let $H\le G$ be a subgroup of $G$. If $H$ is mixable and the action of $G$ on $G/H$ is mixable, then~$G$ is mixable and
  \[
    \mixlen(G)\le\mixlen(H)+\mixlen(G,G/H).
  \]
\end{prop}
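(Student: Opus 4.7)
The plan is to concatenate the two given mixing sequences, putting the coset mixing first and the subgroup mixing second, and show that the resulting random subproduct of length $\mixlen(H) + \mixlen(G, G/H)$ distributes uniformly on $G$.

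Concretely, let $\relem{g_1}{p_1}, \dots, \relem{g_k}{p_k}$ be a mixing sequence of length $k = \mixlen(G, G/H)$ for the action of $G$ on $G/H$ with respect to the base point $x_0 = H$, and let $\relem{h_1}{q_1}, \dots, \relem{h_\ell}{q_\ell}$ be a mixing sequence of length $\ell = \mixlen(H)$ for $H$. Letting all Bernoulli variables be mutually independent, set
\[
\boldsymbol{g} = g_1^{\eps_1}\cdots g_k^{\eps_k}, \qquad \boldsymbol{h} = h_1^{\delta_1}\cdots h_\ell^{\delta_\ell},
\]
viewed as elements of $G$. I claim that $\boldsymbol{g}\boldsymbol{h}$ is uniform on $G$, which exhibits the concatenated sequence $\relem{g_1}{p_1}, \dots, \relem{g_k}{p_k}, \relem{h_1}{q_1}, \dots, \relem{h_\ell}{q_\ell}$ as a mixing sequence of $G$ of length $k+\ell$, giving the asserted bound.

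The key observation is that $\boldsymbol{g}H$ is exactly the random element of $G/H$ produced by applying the coset mixing sequence to the base point $H$, so by assumption $\boldsymbol{g}H$ is uniform on $G/H$. Meanwhile $\boldsymbol{h}$ is uniform on $H$ and independent of $\boldsymbol{g}$. A short conditioning calculation then gives, for any fixed $g_0 \in G$,
\[
\Pr(\boldsymbol{g}\boldsymbol{h} = g_0) = \sum_{g': g'H = g_0H} \Pr(\boldsymbol{g} = g') \, \Pr(\boldsymbol{h} = g'^{-1}g_0) = \Pr(\boldsymbol{g}H = g_0 H)\cdot \frac{1}{|H|} = \frac{1}{|G|},
\]
where we used that $g'^{-1} g_0 \in H$ for each $g'$ in the sum and that $\boldsymbol{h}$ is uniform on $H$.

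There is no real obstacle here; the only point that requires care is the ordering. Because $\boldsymbol{g}H$ is a \emph{left} coset, one must multiply in the order $\boldsymbol{g}\boldsymbol{h}$ (coset mixing first) so that $\boldsymbol{g}\boldsymbol{h} \in \boldsymbol{g}H$ and left multiplication by $\boldsymbol{g}$ becomes a bijection from $H$ onto $\boldsymbol{g}H$, which is what enables the conditional-uniformity step above.
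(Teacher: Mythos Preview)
Your proof is correct and matches the paper's argument essentially line for line: both concatenate a mixing sequence for the action of $G$ on $G/H$ (with respect to the base point $H$) with a mixing sequence for $H$, and verify that $\boldsymbol{g}\boldsymbol{h}$ is uniform on $G$ via the same conditioning computation $\Pr(\boldsymbol{g}\boldsymbol{h}=a)=\frac{1}{|H|}\Pr(\boldsymbol{g}\in aH)=\frac{1}{|G|}$. Your remark about the necessity of the ordering $\boldsymbol{g}\boldsymbol{h}$ is a nice clarification but does not differ from the paper's approach.
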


\begin{proof}
  Take $g_1,\dots,g_k\in G$ and Bernoulli random variables $\eps_1,\dots,\eps_k$ such that $\boldsymbol{g}=g_1^{\eps_1}\cdots g_k^{\eps_k}$ mixes the action of $G$ on~$G/H$. Therefore the random coset $\boldsymbol{g}H$ is uniform on the set $G/H$. By assumption,~$H$ is mixable as well, so there are $h_1,\dots,h_m\in H$ and independent Bernoulli random variables $\delta_1,\dots,\delta_m$ such that the random subproduct $\boldsymbol{h}=h_1^{\delta_1}\cdots h_m^{\delta_m}$ is uniform on $H$.

  We claim that the random subproduct $\boldsymbol{g}\boldsymbol{h}$ is uniform on $G$. Indeed, for any $a\in G$,
  \begin{align*}
    \Pr(\boldsymbol{g}\boldsymbol{h}=a) & = \sum_{g\in G}\Pr(\boldsymbol{h}=g^{-1}a)\Pr(\boldsymbol{g}=g) = \sum_{g\in aH}\Pr(\boldsymbol{h}=g^{-1}a)\Pr(\boldsymbol{g}=g) \\
    & = \frac{1}{\left|H\right|}\sum_{g\in aH}\Pr(\boldsymbol{g}=g) = \frac{1}{\left|H\right|}\Pr(\boldsymbol{g}\in aH) = \frac{1}{\left|H\right|}\frac{1}{{[G:H]}}=\frac{1}{\left|G\right|}.
  \end{align*}
\end{proof}

It now follows that:

\begin{cor}\label{cor:N-and-G/N}
  Let $N\normali G$ be a normal subgroup. If $N$ and $G/N$ are mixable, then $G$ is mixable. Furthermore
  \[
    \mixlen(G)\le\mixlen(N)+\mixlen(G/N).
  \]
\end{cor}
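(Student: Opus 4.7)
The plan is to apply the preceding \Pref{prop:H-and-G/H} with $H=N$. That proposition requires two hypotheses: that $H$ is mixable as a group, and that the action of $G$ on $G/H$ is mixable. The first is immediate from our assumption that $N$ is mixable. For the second, I would invoke the proposition appearing just before \Cref{cor:quotient}, which states that when $N\normali G$ is normal, the quotient $G/N$ is mixable as a group if and only if the natural action of $G$ on $G/N$ is mixable, and moreover the two mixing lengths coincide: $\mixlen(G/N)=\mixlen(G,G/N)$. Since $G/N$ is assumed mixable, this gives the mixability of the action on $G/N$ with equal length.

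Combining these ingredients, \Pref{prop:H-and-G/H} yields that $G$ is mixable, with
\[
  \mixlen(G)\le\mixlen(N)+\mixlen(G,G/N)=\mixlen(N)+\mixlen(G/N),
\]
which is exactly the claim. There is essentially no obstacle here: the statement is a mechanical consequence of the two previous results, the only substantive input being the identification of the mixing length of the quotient group with the mixing length of the coset action, which has already been established. Concretely, one can also see the mixing sequence directly: concatenate a mixing sequence of length $\mixlen(G/N)$ of $G$ acting on $G/N$ (with base point $N$) with a mixing sequence of length $\mixlen(N)$ of $N$; the proof of \Pref{prop:H-and-G/H} then shows that the resulting random subproduct is uniform on $G$.
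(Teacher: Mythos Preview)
Your proposal is correct and is precisely the intended argument: the paper presents this corollary with no explicit proof (just ``It now follows that''), since it is exactly the combination of \Pref{prop:H-and-G/H} applied with $H=N$ together with the proposition identifying mixability of $G/N$ with mixability of the action of $G$ on $G/N$ (and the equality $\mixlen(G/N)=\mixlen(G,G/N)$). There is nothing to add.
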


We also note the following refinement, which shows that the mixability of an action passes to quotient spaces.
\begin{prop}\label{prop:G/K-and-G/H}
  Let $G$ be a finite group, and let $K \le H \le G$ be subgroups. If the action of $G$ on $G/K$ is mixable, then the action of $G$ on $G/H$ is mixable, and
  \[
    \mixlen(G,G/H) \le \mixlen(G,G/K).
  \]
\end{prop}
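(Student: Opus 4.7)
The plan is to exploit the canonical $G$-equivariant surjection $\pi\colon G/K\to G/H$ given by $gK\mapsto gH$. Since $K\le H$, this map is well-defined, and its fibers $\pi^{-1}(gH)=\{ghK:h\in H\}$ all have the same cardinality $[H:K]$. Consequently, if a random variable $\boldsymbol{Y}$ is uniform on $G/K$, then $\pi(\boldsymbol{Y})$ is uniform on $G/H$: for any $gH\in G/H$,
\[
    \Pr(\pi(\boldsymbol{Y})=gH)=\frac{|\pi^{-1}(gH)|}{|G/K|}=\frac{[H:K]}{[G:K]}=\frac{1}{[G:H]}.
\]

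Now suppose $\relem{g_1}{p_1},\dots,\relem{g_k}{p_k}$ is a mixing sequence of the action of $G$ on $G/K$ with respect to the base coset $K\in G/K$, of length $k=\mixlen(G,G/K)$. By definition, the random coset
\[
    \boldsymbol{Y}=g_1^{\eps_1}\cdots g_k^{\eps_k}K
\]
is uniform on $G/K$. Applying $\pi$ and using $G$-equivariance, we get
\[
    \pi(\boldsymbol{Y})=g_1^{\eps_1}\cdots g_k^{\eps_k}H,
\]
which by the observation above is uniform on $G/H$. Hence the very same sequence $\relem{g_1}{p_1},\dots,\relem{g_k}{p_k}$ is a mixing sequence for the action of $G$ on $G/H$ with respect to the base coset $H$, yielding $\mixlen(G,G/H)\le k=\mixlen(G,G/K)$.

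There is essentially no obstacle here; the only point requiring care is to verify that $\pi$ has fibers of constant size, which uses $K\le H$ (without this containment $\pi$ is not even well-defined). The argument is really a one-line push-forward of uniform distributions along an equivariant map whose fibers are equinumerous, and it does not even require transitivity on $G/K$ beyond what is already built into the definition of an action on a coset space.
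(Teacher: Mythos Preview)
Your proof is correct and follows essentially the same approach as the paper's: both observe that if $\boldsymbol{g}K$ is uniform on $G/K$ then $\boldsymbol{g}H$ is uniform on $G/H$, so the same mixing sequence works. The paper's version is terser, simply asserting this push-forward fact without spelling out the fiber computation you give.
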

\begin{proof}
  Take $g_1,\dots,g_k\in G$ and Bernoulli random variables $\eps_1,\dots,\eps_k$ such that $\boldsymbol{g}=g_1^{\eps_1}\cdots g_k^{\eps_k}$ mixes the action of $G$ on~$G/K$. In particular, the random coset $\boldsymbol{g}K$ is uniform on the set $G/K$, and thus $\boldsymbol{g}H$ is uniform on the set $G/H$. This shows that the action of $G$ on $G/H$ is mixable.
\end{proof}

Restating Corollaries~\ref{cor:quotient} and~\ref{cor:N-and-G/N}, we proved:
\begin{thm}\label{thevar}
The class of mixable groups is closed under projection and extension.
\end{thm}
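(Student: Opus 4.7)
The plan is essentially a bookkeeping exercise, since the substantive content has already been established in the two immediately preceding corollaries. I will unpack the words \emph{projection} and \emph{extension} in terms of normal subgroups, and then cite the corresponding results.

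For closure under projection, suppose $G$ is mixable and $\pi\colon G\twoheadrightarrow H$ is a surjective homomorphism. Setting $N=\ker\pi\normali G$, we have $H\cong G/N$, so by \Cref{cor:quotient} the group $H$ is mixable. For closure under extensions, suppose we have a short exact sequence $1\to N\to G\to Q\to 1$ with both $N$ and $Q$ mixable. Identifying $N$ with its image in $G$, we get $N\normali G$ and $G/N\cong Q$, so \Cref{cor:N-and-G/N} yields that $G$ is mixable. Together these two statements give exactly the asserted closure properties.

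There is no real obstacle: the only minor point to mention explicitly, if desired, is that the two corollaries additionally record the quantitative bounds $\mixlen(G/N)\le\mixlen(G)$ and $\mixlen(G)\le\mixlen(N)+\mixlen(G/N)$, which make the closure statements effective. Since the theorem as stated is qualitative, a one-line proof citing \Cref{cor:quotient} and \Cref{cor:N-and-G/N} is sufficient.
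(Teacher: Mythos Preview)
Your proposal is correct and matches the paper's own treatment: the theorem is stated as an immediate restatement of \Cref{cor:quotient} and \Cref{cor:N-and-G/N}, and your write-up simply unpacks the words ``projection'' and ``extension'' before citing those two corollaries.
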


We can use \Cref{cor:quotient} and \Cref{cor:N-and-G/N} to establish the mixability of group extensions:

\begin{cor}\label{cor:products}
  Let $G_1$ and $G_2$ be finite groups.
  \begin{enumerate}
    \item $G_1\times G_2$ is mixable if and only if $G_1$ and $G_2$ are mixable.
    \item If $G_1$ and $G_2$ are mixable, then $G_1\rtimes G_2$ is mixable.
    \item If $G_1$ and $G_2$ are mixable, then $G_1\wr G_2$ is mixable.
  \end{enumerate}
\end{cor}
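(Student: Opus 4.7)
The plan is to deduce all three parts from \Cref{cor:quotient} (quotients of mixable groups are mixable) and \Cref{cor:N-and-G/N} (extensions of mixable groups are mixable), by recognizing each construction as a short exact sequence built out of $G_1$ and $G_2$.

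For part~(1), observe that $G_1\times\{1\}$ is a normal subgroup of $G_1\times G_2$ isomorphic to $G_1$, with quotient isomorphic to $G_2$. If both factors are mixable, \Cref{cor:N-and-G/N} immediately yields the mixability of $G_1\times G_2$. Conversely, each projection $G_1\times G_2\to G_i$ is surjective, so \Cref{cor:quotient} applied to the two projections gives the ``only if'' direction, completing the biconditional.

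For part~(2), the semidirect product fits into an exact sequence $1\to G_1\to G_1\rtimes G_2\to G_2\to 1$ by definition, with $G_1$ as the normal subgroup and $G_2$ as the quotient. So \Cref{cor:N-and-G/N} applies directly and gives what we want.

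For part~(3), recall that $G_1\wr G_2 = G_1^{|G_2|}\rtimes G_2$, where $G_2$ acts on the direct power $G_1^{|G_2|}$ by permuting coordinates via its left regular action. Iterating part~(1) shows that the direct power $G_1^{|G_2|}$ is mixable whenever $G_1$ is, and part~(2) then gives mixability of the whole wreath product. There is no real obstacle here beyond fixing the convention for the wreath product; the standard (regular) convention used above makes the argument a pure formality given the two prior corollaries.
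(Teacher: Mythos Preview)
Your proof is correct and follows exactly the approach the paper intends: the corollary is stated immediately after the sentence ``We can use \Cref{cor:quotient} and \Cref{cor:N-and-G/N} to establish the mixability of group extensions,'' and your argument spells out precisely those applications (quotients for the ``only if'' in~(1), extensions for the ``if'' in~(1) and for~(2), and iteration for~(3)).
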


However, mixability does not pass to subdirect products:

\begin{exmpl}[Nonmixable subdirect product of mixable groups]
    For a finite group $G$ and two subgroups $H_1,H_2\le G$, if the actions of~$G$ on $G/H_1$ and on $G/H_2$ are mixable, it does not necessarily follow that the action of $G$ on $G/(H_1\cap H_2)$ is mixable.

    To demonstrate this, we take $G=\Alt{4}$ and the two subgroups $H_1=\sg{(1\,2\,3)}$ and $H_2=\sg{(2\,3\,4)}$. We will show that the actions of $G$ on $G/H_1$ and on $G/H_2$ are mixable. Indeed, $H_1$ and $H_2$ are the stabilizers of $4$ and $1$ respectively in the action of $\Alt{4}$ on $\set{1,2,3,4}$, and this action is mixable (we show this later in \Pref{prop:mix-An-act}, although this can be checked directly). However, $H_1\cap H_2=\set{\id}$, and $\Alt{4}$ is not mixable (see the paragraph before \Pref{prop:mix-An-act}).
\end{exmpl}

\section{Basic examples}\label{sec:basic-examples}

In this section we consider some basic examples where one can prove or disprove mixability with our current techniques. We begin by showing that $2$-groups are mixable, generalizing \Eref{exmpl:Z-mod-pow2}:

\begin{cor}\label{cor:2-groups}
    Every finite $2$-group $G$ is mixable, and $\mixlen(G)=\log_2\left|G\right|$.
\end{cor}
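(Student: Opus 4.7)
The plan is to argue by induction on $|G|$, using the closure of mixable groups under extensions (\Cref{cor:N-and-G/N}).

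The base case is trivial. For the inductive step, suppose $G$ is a nontrivial finite $2$-group. Since $p$-groups have nontrivial center, I can pick a central element of order $2$, generating a central (in particular, normal) subgroup $N\normali G$ with $|N|=2$. Then $N\cong\Z/2\Z$ is mixable with $\mixlen(N)=1$ (e.g.\ by \Eref{exmpl:Z-mod-pow2} with $d=1$), and $G/N$ is a $2$-group of strictly smaller order, so by induction it is mixable with $\mixlen(G/N)=\log_2|G/N|=\log_2|G|-1$. \Cref{cor:N-and-G/N} then gives that $G$ is mixable with
\[
    \mixlen(G)\le\mixlen(N)+\mixlen(G/N)=1+(\log_2|G|-1)=\log_2|G|.
\]
The matching lower bound $\mixlen(G)\ge\log_2|G|$ is just the entropy bound of \Rref{rem:entropy-bound}, so equality holds.

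There is no serious obstacle here: both inputs (existence of a central subgroup of order $2$ and the extension bound on mixing length) are already available. The one minor point to get right is that the bound telescopes exactly without slack, which is why this family achieves the entropy bound on the nose; iterating through a chief series of $G$ with factors of order $2$, one accumulates a total mixing length of exactly the number of factors, namely $\log_2|G|$.
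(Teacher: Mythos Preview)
Your proof is correct and essentially the same as the paper's: both arguments unwind a subnormal series of $G$ with successive quotients of order $2$, apply \Cref{cor:N-and-G/N} repeatedly to accumulate a mixing length of $\log_2|G|$, and invoke the entropy bound for the matching lower bound. The only cosmetic difference is that the paper writes the whole composition series at once, while you phrase it as an induction peeling off one central subgroup of order~$2$ at a time.
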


\begin{proof}
    If $G$ is a group of order $2^t$, then one can write
    \[
        \set{e} =G_t \normali G_{t-1} \normali\cdots\normali G_0=G,
    \]
    where ${[G_i:G_{i+1}]}=2$ for any $0\le i\le t-1$. Since $G_i/G_{i+1}\cong\Z/2\Z$ is mixable for all $i$, it follows that $G$ is mixable with $\mixlen(G)\le t$. The lower bound $\mixlen(G)\ge t$ follows from \Rref{rem:entropy-bound}.
\end{proof}

In contrast to $2$-groups, we show that groups of odd order are not mixable.

\begin{thm}\label{thm:odd-order}
    Any finite group with a nontrivial quotient of odd order is not mixable.
\end{thm}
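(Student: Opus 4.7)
The plan is to chain two reductions and then run a Fourier argument on a cyclic group of odd prime order. First, since mixability passes to quotients by \Cref{cor:quotient}, it suffices to show that every nontrivial finite group $G$ of odd order fails to be mixable. Second, by the Feit--Thompson theorem such a $G$ is solvable, so its abelianization $G/[G,G]$ is a nontrivial abelian group of odd order, and hence surjects onto $\Z/p\Z$ for some odd prime~$p$. Invoking \Cref{cor:quotient} once more reduces the statement to the claim that $\Z/p\Z$ is not mixable for any odd prime~$p$.

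For this last step, suppose for contradiction that $\relem{g_1}{p_1},\dots,\relem{g_k}{p_k}$ is a mixing sequence for $\Z/p\Z$, and set $\omega = e^{2\pi i/p}$. On the one hand, the uniform distribution on $\Z/p\Z$ satisfies $\mathbb{E}[\omega^Y]=0$, since $\sum_{j=0}^{p-1}\omega^{j}=0$. On the other hand, by independence of the $\eps_i$,
\[
\mathbb{E}\!\left[\omega^{\eps_1 g_1+\cdots+\eps_k g_k}\right] \;=\; \prod_{i=1}^{k}\bigl((1-p_i)+p_i\omega^{g_i}\bigr),
\]
so some factor must vanish. Solving gives $\omega^{g_i}=(p_i-1)/p_i$, which is a real number that is at most~$0$ and in particular not equal to~$1$. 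But for odd~$p$ the only real $p$-th root of unity is~$1$ itself, a contradiction.

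The main input beyond our elementary setup is the appeal to Feit--Thompson in producing a quotient of odd prime order: without solvability, an odd-order group could in principle be perfect, leaving no nontrivial one-dimensional character through which to run the factoring argument. This obstacle can alternatively be bypassed by working directly with any nontrivial irreducible matrix representation $\rho$ of $G$: the matrix product $\prod_i\bigl((1-p_i)I+p_i\rho(g_i)\bigr)$ must equal~$0$, hence some factor has zero determinant, forcing $(p_i-1)/p_i$ to be an eigenvalue of $\rho(g_i)$; since those eigenvalues are roots of unity of odd order, the same contradiction arises. This is the flavour of the representation-theoretic proof the paper promises in \Cref{cor:reps--1-ev}.
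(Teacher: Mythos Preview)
Your argument is correct, but it takes a substantially heavier route than the paper's primary proof. The paper does \emph{not} invoke Feit--Thompson at all: after the reduction to a nontrivial odd-order group $G$, it argues directly and elementarily. Assuming a mixing sequence of minimal length~$k$, the distribution $\mu$ of $g_1^{\eps_1}\cdots g_{k-1}^{\eps_{k-1}}$ is non-uniform, so pick $a$ with $\mu(a)>1/|G|$. The single-step relation
\[
\tfrac{1}{|G|}=(1-p)\mu(g)+p\,\mu(gg_k^{-1})
\]
forces the sign of $\mu(a g_k^{-m})-\tfrac{1}{|G|}$ to alternate with~$m$; taking $m=|G|$, which is odd, yields $\mu(a)<1/|G|$, a contradiction. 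No characters, no solvability, just a parity argument along the cyclic subgroup $\langle g_k\rangle$.

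Your main line reaches the same conclusion but at the cost of importing the odd-order theorem purely to manufacture a one-dimensional character. The alternative you sketch at the end---apply any nontrivial irreducible $\rho$, note $\prod_i\bigl((1-p_i)I+p_i\rho(g_i)\bigr)=0$ forces some factor to be singular, hence some $\rho(g_i)$ has eigenvalue $-1$, impossible in odd order---is exactly the paper's second proof (\Lref{lem:reps-sing} and \Cref{cor:reps--1-ev}), and it already works for an arbitrary odd-order group without Feit--Thompson, since every nontrivial group has a nontrivial irreducible representation. So both of the paper's proofs are Feit--Thompson-free; the dependence in your write-up is an artefact of insisting on a \emph{one-dimensional} character rather than any irreducible one.
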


\begin{proof}
    By \Cref{cor:N-and-G/N}, it is enough to prove the theorem for any nontrivial group $G$ of odd order. Suppose to the contrary that there exist $g_1,\dots,g_k\in G$ and independent Bernoulli random variables $\eps_1,\dots,\eps_k$ such that $g_1^{\eps_1}\cdots g_k^{\eps_k}$ is uniform on $G$. We may assume that $k$ is the mixing length of $G$, and thus the distribution $\mu$ of $\boldsymbol{g}=g_1^{\eps_1}\cdots g_{k-1}^{\eps_{k-1}}$ is not uniform on $G$. In particular, there exists some $a\in G$ such that $\mu(a)>\frac{1}{\left|G\right|}$.

    Write $p=\Pr(\eps_k=1)$. Since $\boldsymbol{g}g_k^{\eps_k}$ is uniform on $G$, for all $g\in G$ we have
    \begin{equation}\label{eq:odd-order-eq}
      \frac{1}{\left|G\right|}=(1-p)\mu(g)+p\mu(gg_k^{-1}).
    \end{equation}
    Therefore $\mu(g)>\frac{1}{\left|G\right|}$ if and only if $\mu(gg_k^{-1})<\frac{1}{\left|G\right|}$.

    Since $\mu(a)>\frac{1}{\left|G\right|}$, we have $\mu(ag_k^{-1})<\frac{1}{\left|G\right|}$, which in turn implies $\mu(ag_k^{-2})>\frac{1}{\left|G\right|}$, and so forth. Continuing this process yields $\mu(ag_k^m)>\frac{1}{\left|G\right|}$ for even $m$ and $\mu(ag_k^m)<\frac{1}{\left|G\right|}$ for odd $m$. Taking $m=\left|G\right|$, which is odd, proves that $\mu(a)=\mu(ag_k^{\left|G\right|})<\frac{1}{\left|G\right|}$, in contradiction to our choice of $a$. This concludes the proof.
\end{proof}

We will later see an alternative proof of the theorem using representation theory (see \Cref{quickodd} for a special case, and \Cref{cor:reps--1-ev} and the discussion afterward for arbitrary odd order groups).
\medskip

As a direct corollary of the two results in this section, we can determine which finite abelian groups are mixable:

\begin{cor}\label{cor:abelian}
    A finite abelian group is mixable if and only if it is a $2$-group.
\end{cor}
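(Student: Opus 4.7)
The plan is to derive the corollary as an immediate consequence of the two main results of this section: \Cref{cor:2-groups}, which establishes that every finite $2$-group is mixable, and \Tref{thm:odd-order}, which says that any finite group admitting a nontrivial odd-order quotient fails to be mixable. One direction is already in hand: if the finite abelian group $G$ is a $2$-group, then \Cref{cor:2-groups} directly gives mixability (indeed, with $\mixlen(G) = \log_2 |G|$).

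For the converse, I would suppose that $G$ is a finite abelian group that is \emph{not} a $2$-group, and aim to produce a nontrivial quotient of odd order, at which point \Tref{thm:odd-order} finishes the argument. Since $G$ is abelian, every Sylow subgroup is normal, and the primary decomposition yields an internal direct product
\[
  G \cong G_2 \times G_{\mathrm{odd}},
\]
where $G_2$ is the Sylow $2$-subgroup of $G$ and $G_{\mathrm{odd}}$ is the subgroup of elements of odd order. By hypothesis $G_{\mathrm{odd}}$ is nontrivial, and $G/G_2 \cong G_{\mathrm{odd}}$ is a nontrivial quotient of odd order. \Tref{thm:odd-order} then rules out mixability of $G$.

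There is no real obstacle here; the only ingredient beyond the two cited results is the elementary fact that a finite abelian group splits as a direct product of its $p$-primary components, so producing the odd-order quotient is automatic. The proof can be written in just a few lines.
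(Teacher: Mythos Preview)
Your proof is correct and follows essentially the same approach as the paper: one direction is \Cref{cor:2-groups}, and for the other you exhibit a nontrivial odd-order quotient and invoke \Tref{thm:odd-order}. The paper's version is merely terser, asserting the existence of such a quotient without spelling out the primary decomposition.
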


\begin{proof}
    Any abelian $2$-group is mixable by \Cref{cor:2-groups}, and the rest are not mixable since they have a nontrivial quotient of odd order.
\end{proof}

\section{The symmetric and alternating groups}\label{sec:perm-groups}

The next two cases we turn to are the symmetric groups $\Sym{n}$ and the alternating groups $\Alt{n}$. As mentioned above, the work of Angel and Holroyd shows that the symmetric groups~$\Sym{n}$ are mixable for all $n$. In this section, we show that one can mix~$\Sym{n}$ in at most $\frac{3}{2}\log_2(n!)+\frac{1}{2}n$ steps using arbitrary involutions, which agrees with our lower bound of \Rref{rem:entropy-bound} up to a factor of $2$. We then consider the alternating groups $\Alt{n}$, showing that they are mixable for all $n\ge 5$ in at most $\frac{3}{2}\log_2(n!)+\frac{1}{2}n$ steps, which is again of the same order as the lower bound of \Rref{rem:entropy-bound}.

\subsection{The symmetric groups}

To give an upper bound on the mixing length of $\Sym{n}$, we first show that its natural action on $\set{1,\dots,n}$ can be mixed in at most $2\log_2 n$ steps, and then use an inductive argument to conclude the mixing length bound. We begin with the case where $n$ is a power of $2$:

\begin{lem}\label{lem:sym-power-2-mix}
  The action of $\Sym{2^m}$ on $\set{1,\dots,2^m}$ can be mixed using involutions in $m$ steps.
\end{lem}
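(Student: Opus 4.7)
My plan is to give an explicit construction rather than argue inductively. Identify the set $\{1,\ldots,2^m\}$ with the $m$-dimensional $\F_2$-vector space $V=\F_2^m$, via any bijection. For each $i=1,\ldots,m$, let $g_i\in\Sym{2^m}$ denote the permutation
\[
  g_i\colon v\mapsto v+e_i,
\]
where $e_1,\ldots,e_m$ is the standard basis of $V$. Each $g_i$ is an involution, being a product of $2^{m-1}$ disjoint transpositions that pair bit-strings differing only in the $i$-th coordinate. Moreover, as the $e_i$ pairwise commute under addition, the involutions $g_1,\ldots,g_m$ pairwise commute in $\Sym{2^m}$.

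I claim the mixing sequence $\relem{g_1}{1/2},\relem{g_2}{1/2},\ldots,\relem{g_m}{1/2}$ mixes the action of $\Sym{2^m}$ on $V$ with respect to $x_0=0$. Indeed, let $\eps_1,\ldots,\eps_m$ be independent $\Ber(\tfrac12)$ variables. Using commutativity of the $g_i$'s, the random subproduct acts on $0$ as
\[
  g_1^{\eps_1}g_2^{\eps_2}\cdots g_m^{\eps_m}\cdot 0 \;=\; \eps_1 e_1+\eps_2 e_2+\cdots+\eps_m e_m \;=\; (\eps_1,\eps_2,\ldots,\eps_m)\in V.
\]
Since the coordinates $\eps_i$ are independent uniform bits, the right-hand side is uniformly distributed on $V$, so the action is mixed in exactly $m$ steps by involutions, as desired.

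There is really no obstacle in this argument; the only subtlety is choosing the generators so that applying them in any order has the effect of independently randomizing coordinates. The key structural observation is that $V$ carries a commutative group structure compatible with bit-flipping, so a set of commuting involutions whose corresponding translations form a basis of $V$ does the job. (Equivalently, one can view this inductively: split $\{1,\ldots,2^m\}$ into two halves, use $g_m$ with probability $\tfrac12$ to mix which half contains the image of $x_0$, and recurse on the mixing of $\Sym{2^{m-1}}$ acting on each half simultaneously via $g_1,\ldots,g_{m-1}$.)
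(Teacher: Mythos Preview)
Your proof is correct and is essentially the same as the paper's: both identify $\{1,\ldots,2^m\}$ with length-$m$ bit strings and use the $m$ commuting bit-flip involutions with probability $\tfrac12$ each, so that the image of the base point is a uniformly random bit string. The only difference is notational---you phrase it via translations in $\F_2^m$, while the paper writes out the same permutations explicitly as products of transpositions on $\{1,\ldots,2^m\}$.
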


\begin{proof}
  We think of $\set{1,\dots,2^m}$ as sequences of bits of length $m$, and mix them ``bit by bit''. Formally, we consider the following mixing sequence:
  \begin{itemize}
    \item $\relem{(1,2)(3,4)\cdots(2^m-1,2^m)}{\frac{1}{2}}$;
    \item $\relem{(1,3)(2,4)\cdots(2^m-2,2^m)}{\frac{1}{2}}$;
    \item $\cdots$
    \item $\relem{(1,2^{m-1}+1)(2,2^{m-1}+2)\cdots(2^{m-1},2^m)}{\frac{1}{2}}$.
  \end{itemize}
  If the initial value we start with is $1$, one easily sees that its distribution after $j$ steps is uniform on $\set{1,\dots,2^j}$ for any $1\le j\le m$, which proves the lemma.
\end{proof}

We can now extend the lemma to arbitrary $n$. Let $\wt(n)$ denote the Hamming weight of $n$ (the number of $1$'s in the binary representation). Note that  $\wt(n) \leq \log_2(n)$.

\begin{prop}\label{prop:mix-Sn-act}
  The action of $\Sym{n}$ on $\set{1,\dots,n}$ can be mixed using arbitrary involutions in $\floor{\log_2 n} + \omega(n)-1$ steps.
\end{prop}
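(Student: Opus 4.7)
My plan is to build the mixing in two phases, keyed to the binary expansion of $n$. Write $n = 2^{m_1} + 2^{m_2} + \cdots + 2^{m_s}$ with $m_1 > m_2 > \cdots > m_s \geq 0$, so that $s = \wt(n)$ and $m_1 = \floor{\log_2 n}$. Partition $\set{1,\dots,n}$ into consecutive blocks $B_1,\dots,B_s$ with $\left|B_i\right|=2^{m_i}$, and let $r_i$ denote the smallest element of $B_i$; the base point is $x_0=r_1=1$.

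In the first phase, I distribute the initial unit mass across the blocks using $s-1$ transpositions, so that afterwards the distribution is supported on $\set{r_1,\dots,r_s}$ with $\Pr(r_i)=2^{m_i}/n$. Explicitly, for $j=1,\dots,s-1$ I apply the transposition $(r_1,\,r_{s-j+1})$ with probability $p_j := 2^{m_{s-j+1}}/(2^{m_1}+\cdots+2^{m_{s-j+1}})$. A direct induction shows that just before step $j$ the mass remaining at $r_1$ equals $(2^{m_1}+\cdots+2^{m_{s-j+1}})/n$, so each $p_j$ lies in $[0,1]$ and the target distribution is reached after exactly $s-1$ steps.

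In the second phase, I mix each block internally to uniform using $m_1$ involutions in parallel. For $j=1,\dots,m_1$, set $\sigma_j := \prod_{i \,:\, m_i \geq j} \sigma_j^{(i)}$, where $\sigma_j^{(i)}$ is the $j$-th involution from \Lref{lem:sym-power-2-mix} applied to the block $B_i$ (identified with $\set{1,\dots,2^{m_i}}$ via $r_i\leftrightarrow 1$), and apply $\sigma_j$ with probability $\tfrac{1}{2}$. Since the $\sigma_j^{(i)}$ have pairwise disjoint supports (contained in distinct blocks), $\sigma_j$ is a well-defined involution of $\set{1,\dots,n}$. By \Lref{lem:sym-power-2-mix}, after step $m_i$ the mass on $B_i$---initially $2^{m_i}/n$ concentrated at $r_i$---becomes $1/n$ at each element of $B_i$; and for $j > m_i$ the involution $\sigma_j$ restricts to the identity on $B_i$, preserving this uniformity. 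Summing across blocks yields the uniform distribution on $\set{1,\dots,n}$, and the total length is $(s-1)+m_1 = \wt(n) + \floor{\log_2 n} - 1$.

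I do not foresee a serious obstacle: the construction is explicit and the analysis reduces to routine bookkeeping. The only mildly subtle point is that Phase 2 runs for $m_1$ steps independently of the block size, which is harmless precisely because smaller blocks receive the identity in the later steps and therefore retain the uniformity achieved at their own completion time.
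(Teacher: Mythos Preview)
Your proof is correct and follows essentially the same approach as the paper's: partition $\{1,\dots,n\}$ into dyadic blocks according to the binary expansion of $n$, distribute the initial mass among block representatives using $\wt(n)-1$ transpositions, then mix all blocks in parallel via $\floor{\log_2 n}$ involutions from \Lref{lem:sym-power-2-mix}, padding the smaller blocks with the identity. The only cosmetic difference is that you index the blocks in descending rather than ascending size order (so your base point lies in the largest block rather than the smallest), which reverses the order in which blocks receive their share of the mass in Phase~1 but changes nothing of substance.
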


\begin{proof}
  The idea is to partition $\set{1,\dots,n}$ to blocks such that their sizes are powers of $2$. To mix the action of $\Sym{n}$ on $\set{1,\dots,n}$ starting from the element $1$, we first send it to a random block using transpositions, and then apply the previous lemma to mix each block separately.

  Write $n=\sum_{m\in I}2^m$ for $I\sub\set{1,\dots,\floor{\log_2 n}}$. Arrange the elements of $I$ in an ascending order $m_1<\cdots<m_t=\floor{\log_2 n}$, so that $t = \card{I} = \wt(n)$. Write $n_k=\sum_{j=1}^k 2^{m_j}$ for each $1\le k\le t$ (so that $n_t=n$), and define $B_k=\set{n_{k-1}+1,\dots,n_k}$ for all $1\le k\le t$.

  We define a random subproduct of permutations of $\Sym{n}$ that will send $1$ to a uniform element of $\set{1,\dots,n}$. We will do this in two stages.\medskip

  In stage I, we send $1$ to a random block among $B_1,\dots,B_t$. We do so using the following steps:
  \begin{itemize}
    \item Apply $(1,n_1+1)$ with probability $p_2=\frac{\left|B_2\right|}{n}$.
    \item Apply $(1,n_2+1)$ with probability $p_3=\frac{\left|B_3\right|}{n-\left|B_2\right|}$.
    \item $\cdots$
    \item Apply $(1,n_{t-1}+1)$ with probability $p_t=\frac{\left|B_t\right|}{n-(\left|B_2\right|+\cdots+\left|B_{t-1}\right|)}$.
  \end{itemize}
  Note that after we applied one transposition, the rest do not move the image of $1$.

  Let $\boldsymbol{\sigma}$ denote the random permutation achieved by these steps. Then, for any $2\le k\le t$,
  \[
    \Pr(\boldsymbol{\sigma}(1)\in B_k) = \prod_{j=2}^{k-1} (1-p_j) \cdot p_k =\frac{\left|B_k\right|}{n},
  \]
  which also implies that $\Pr(\boldsymbol{\sigma}(1)\in B_1)=\frac{\left|B_1\right|}{n}$.\medskip

  In stage II, we consider the subgroup $\Sym{2^{m_1}}\times\cdots\times\Sym{2^{m_t}}\le\Sym{n}$ which permutes each block separately. By \Lref{lem:sym-power-2-mix}, the $k$-th block $B_k$ can be mixed in at most $m_k$ steps for all $1\le k\le t$. Since we only care about mixing the blocks separately, and all probabilities involved are~$\frac{1}{2}$, we can apply the different permutations of \Lref{lem:sym-power-2-mix} on all blocks simultaneously. As the different blocks have different mixing lengths, we can extend the shorter mixings by the identity element (though, in fact, any extension would work).

  Let $\boldsymbol{\tau}$ denote the random permutation achieved by the steps of stage~II. We claim that $\boldsymbol{\tau\sigma}(1)$ is uniform on $\set{1,\dots,n}$. Indeed, fix $1\le i\le n$, and let $k$ be the index such that $i\in B_k$. Then
  \[
    \Pr(\boldsymbol{\tau\sigma}(1)=i) = \Pr(\boldsymbol{\tau\sigma}(1)=i\suchthat \boldsymbol{\sigma}(1)\in B_k)\Pr(\boldsymbol{\sigma}(1)\in B_k) = \frac{1}{\left|B_k\right|}\frac{\left|B_k\right|}{n}=\frac{1}{n}.
  \]
  Counting the mixing steps, stage I required $\wt(n)-1$ steps, while stage~II required $m_t=\floor{\log_2 n}$ steps. This concludes the proof.
\end{proof}

Summing up, we obtain an upper bound for the mixing length of $\Sym{n}$:

\begin{thm}\label{cor:mixlen-sym}
  Using involutions, one can achieve
  \[
    \mixlen(\Sym{n})\le \frac{3}{2}\floor{\log_2(n!)} + \frac{1}{2}n.
  \]
\end{thm}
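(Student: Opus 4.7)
The plan is to combine the inductive tool of Proposition~2.8 with the action-mixing bound of Proposition~4.2. Taking $H=\Sym{n-1}$ as the point stabilizer of $n$, the quotient $\Sym{n}/\Sym{n-1}$ is identified with $\{1,\dots,n\}$, so Proposition~2.8 gives
\[
\mixlen(\Sym{n})\le\mixlen(\Sym{n-1})+\mixlen\bigl(\Sym{n},\{1,\dots,n\}\bigr),
\]
and Proposition~4.2 bounds the second term by $\floor{\log_2 n}+\omega(n)-1$ using only involutions. Since the concatenation in the proof of Proposition~2.8 preserves involution status, telescoping from the base case $\mixlen(\Sym{1})=0$ yields a mixing sequence of involutions for $\Sym{n}$ of length at most $\sum_{k=2}^{n}(\floor{\log_2 k}+\omega(k)-1)$.

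It therefore suffices to bound this sum by $\tfrac32\floor{\log_2(n!)}+\tfrac{n}{2}$. Writing $V(n)=\sum_{k=1}^n\floor{\log_2 k}$ and $U(n)=\sum_{k=1}^n\omega(k)$, and observing that the $k=1$ summand $\floor{\log_2 1}+\omega(1)-1$ vanishes, the sum equals $V(n)+U(n)-n$. The crux is the averaged estimate $U(n)\le\tfrac12 V(n)+n$, after which the sum is at most $\tfrac32 V(n)\le\tfrac32\floor{\log_2(n!)}$ (the last inequality because $V(n)$ is an integer with $V(n)\le\log_2(n!)$); this is actually a shade stronger than the theorem's statement. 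The pointwise bound $\omega(k)\le\floor{\log_2 k}+1$ is too weak here---it is tight at $k=2^t-1$ and only yields $\sum\le 2\log_2(n!)$---so an averaging argument is required, and this is the main technical step.

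To prove the averaged estimate $U(n)\le\tfrac12 V(n)+n$, decompose $n=2^M+r$ with $M=\floor{\log_2 n}$ and $0\le r\le 2^M-1$. Using $\omega(2^M+j)=1+\omega(j)$ together with the classical identity $\sum_{k=0}^{2^M-1}\omega(k)=M\cdot 2^{M-1}$ and the closed form $V(n)=Mn-2^{M+1}+M+2$, a direct computation reduces the estimate to the claim
\[
  \widetilde{S}(r):=\sum_{j=0}^{r}\omega(j)\ \le\ \tfrac{M(r+1)}{2},\qquad 0\le r\le 2^M-1.
\]
I would prove this by induction on $M$. For $r<2^{M-1}$ the bound follows directly from the $(M-1)$-case. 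For $r=2^{M-1}+r'$ with $0\le r'\le 2^{M-1}-1$, splitting at $2^{M-1}$ gives $\widetilde{S}(r)=(M-1)2^{M-2}+(r'+1)+\widetilde{S}(r')$; applying the inductive hypothesis to $\widetilde{S}(r')$ yields $\widetilde{S}(r)\le\tfrac{(M-1)(r+1)}{2}+(r'+1)$, and the remaining inequality $(r'+1)\le (r+1)/2$ reduces to $r'+1\le 2^{M-1}$, which holds. This closes the induction and hence the theorem.
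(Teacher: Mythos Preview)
Your proof is correct and follows essentially the same route as the paper: both invoke Proposition~4.2 for the action and telescope via Proposition~2.8, arriving at the sum $\sum_{k=1}^n(\floor{\log_2 k}+\omega(k)-1)=V(n)+U(n)-n$, and both preserve the involution property through the concatenation.

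The only difference lies in how the weight sum $U(n)=\sum_{k=1}^n\omega(k)$ is bounded. The paper uses a one-line bit-counting argument: each of the $\floor{\log_2 n}+1$ bit positions is set in at most $(n+1)/2$ of the integers $1,\dots,n$, giving $U(n)\le\tfrac12(\floor{\log_2 n}+1)(n+1)$, and then finishes with an elementary inequality. Your inductive estimate $U(n)\le\tfrac12 V(n)+n$ is sharper (indeed tight at $n=2^M-1$) and yields the cleaner bound $\mixlen(\Sym{n})\le\tfrac32 V(n)\le\tfrac32\floor{\log_2(n!)}$, eliminating the $+\tfrac{n}{2}$ term entirely. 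So your argument is slightly longer but buys a genuinely stronger conclusion; the paper's argument is shorter but leaves the extra linear term.
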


\begin{proof}
  By \Pref{prop:mix-Sn-act}, the action of $\Sym{n}$ on $\set{1,\dots,n}$ can be mixed within $\floor{\log_2 n} + \wt(n)-1$ steps. Write $m = \floor{\log_2(n)}$, so that $2^m \leq n$. Writing the numbers $1,\dots,n$ in their binary representation, each of the $m+1$ bits involved is equal to $1$ at most $(n+1)/2$ times, so $\sum_{k=1}^n \wt(k) \leq \frac{1}{2}(m+1)(n+1)$.
  The point stabilizer of this action is $\Sym{n-1}$, hence applying \Pref{prop:H-and-G/H} repeatedly we have
\begin{align*}
\mixlen(\Sym{n}) & \leq  \sum_{k=1}^{n} ( \floor{\log_2 k} + \omega(k) - 1) \\
& =  \log_2(n!) + \sum_{k=1}^{n} (\omega(k)-1) \\
& \leq  \log_2(n!) + \frac{1}{2}(m+1)(n+1)-n \\
& =  \log_2(n!) + \frac{1}{2}(m-1)(n+1) + 1 \\
& =  \log_2(n!) + \frac{1}{2}(\floor{\log_2(n)}-1)(n+1) + 1 \\
& \leq  \frac{3}{2}\log_2(n!) + \frac{1}{2}n,
\end{align*}
as required, where the final stage follows from $2 n^{n+1}   \leq 4^nn!$ implying
$$(\log_2(n)-1)(n+1) + 2 \leq \log_2(n!) + n.$$
\end{proof}

\subsection{The alternating groups}

We next consider the mixability of the alternating groups $\Alt{n}$. For $n=3$, the group $\Alt{3}$ is cyclic of order $3$, hence is not mixable by \Tref{thm:odd-order}. When $n=4$, the group $\Alt{4}$ is again not mixable, since it has a quotient isomorphic to $\Alt{3}$. We will prove that $\Alt{n}$ is mixable whenever $n\ge 5$, with a similar strategy to the above.

\begin{prop}\label{prop:mix-An-act}
  For any $n\ge 4$, the action of $\Alt{n}$ on $\set{1,\dots,n}$ is mixable within at most $\floor{\log_2 n}+\omega(n)$ steps.
\end{prop}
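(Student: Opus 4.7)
The plan is to adapt the two-stage argument of \Pref{prop:mix-Sn-act} so that every element chosen lies in $\Alt{n}$. Partition $\set{1,\ldots,n}$ into blocks $B_1,\ldots,B_t$ whose sizes are the distinct powers of two in the binary expansion of $n$, as there. The assumption $n\ge 4$ guarantees $|B_t| = 2^{m_t}\ge 4$, which will give us the auxiliary ``room'' we need.

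In Stage I, I would replace each transposition $(1,n_{k-1}+1)$ of \Pref{prop:mix-Sn-act} by the double transposition $\tau_k = (1,n_{k-1}+1)(a_k,b_k)$, where $a_k,b_k$ are two distinct elements of $B_t\setminus\set{n_{k-1}+1}$ (a set with at least three elements). Each $\tau_k\in\Alt{n}$, and since $1\notin B_t$ and the image of $1$ after any earlier $\tau_j$ lies in some $B_j\neq B_t$, the auxiliary pair $(a_k,b_k)$ does not disturb the tracked position of $1$. With the same probabilities $p_k$ as in \Pref{prop:mix-Sn-act}, these $\omega(n)-1$ steps send $1$ to $n_{k-1}+1\in B_k$ with probability $|B_k|/n$.

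In Stage II, I would apply \Lref{lem:sym-power-2-mix} to all blocks simultaneously. For $j\ge 2$ the combined $j$-th step is a product of $\sum_{k:\,m_k\ge j}2^{m_k-1}$ disjoint transpositions with every summand at least $2$, hence in $\Alt{n}$. The first step is odd precisely when $1\in I$, i.e.\ when there is a (unique) block $B_\ell$ of size $2$. In that case I split the first step into two independent steps of probability $\frac{1}{2}$: step 1a $=(n_{\ell-1}+1,n_{\ell-1}+2)(a,b)$ for some distinct $a,b\in B_t$, and step 1b $=$ the product of \Lref{lem:sym-power-2-mix}'s first involution over all blocks other than $B_\ell$. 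Both are products of an even number of disjoint transpositions. Summing all steps: at most $(\omega(n)-1)+\floor{\log_2 n}+1 = \omega(n)+\floor{\log_2 n}$.

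The point needing verification is that introducing the auxiliary transposition $(a,b)$ in step 1a does not spoil the mixing of $B_t$. Identifying $B_t$ with $\set{0,1,\ldots,2^{m_t}-1}$ so that \Lref{lem:sym-power-2-mix}'s $j$-th involution on $B_t$ becomes XOR with $2^{j-1}$, the cumulative effect on $B_t$ starting from any $x$ is $(a,b)^{\eps_0}(x)\oplus Z$, where $Z$ is the XOR of independent Bernoulli-$\frac{1}{2}$ choices from $\set{0,2^{j-1}}$ for $j=1,\ldots,m_t$, and is therefore uniform on $\set{0,\ldots,2^{m_t}-1}$. Hence $(a,b)^{\eps_0}(x)\oplus Z$ is uniform on $B_t$ regardless of $\eps_0$, $a$, $b$, $x$. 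The size-$2$ block is mixed by step 1a alone, and the other blocks by \Lref{lem:sym-power-2-mix} as in \Pref{prop:mix-Sn-act}; so $1$'s final position is uniform on $\set{1,\ldots,n}$.
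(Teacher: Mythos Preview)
Your proof is correct and follows the same two-stage adaptation as the paper: pair each odd transposition (those of Stage~I and the single odd step in Stage~II coming from a size-$2$ block) with an auxiliary transposition living in the largest block $B_t$, at the cost of at most one extra step. The only difference is ordering in Stage~II: the paper mixes all other blocks first and appends the size-$2$ step last (so the auxiliary $(n-1,n)$ acts on an already uniform $B_t$ and trivially preserves uniformity), whereas you place the size-$2$ step first and then verify that the auxiliary $(a,b)$ is absorbed by the subsequent XOR mixing of $B_t$ --- a slightly longer but equally valid justification.
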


\begin{proof}
  The idea is to adjust the mixing presented in the proof of \Pref{prop:mix-Sn-act} to use only even permutations. There are two places where we use odd permutations. In stage I, we can replace the transposition $(1,n_k+1)$ with $(1,n_k+1)(n-1,n)$. This does not affect the image of $1$, since by the assumption $n\ge 5$, the last block has size $\ge 4$. Therefore the outcome of stage I will remain valid.

  In stage II, all permutations used are even other than when mixing a block of size $2$, where the above proof uses a single transposition. We fix that by first mixing all other blocks in $\floor{\log_2 n}+\omega(n)-1$ steps, and then mixing the block of size $2$ by multiplying the required transposition with the transposition $(n-1,n)$. Note that this does not affect the last block, as it is already mixed.

  Our adjustments added at most $1$ new step to the above steps, hence we proved the proposition.
\end{proof}

\begin{cor}\label{cor:alternating}
  The alternating group $\Alt{n}$ is mixable for all $n\ge 5$, and
  \[
    \log_2(n!)-1\le\mixlen(\Alt{n})\le \frac{3}{2}\floor{\log_2 (n!)} + \frac{1}{2} n.
  \]
\end{cor}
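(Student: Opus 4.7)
The plan is to proceed by strong induction on $n$, mirroring the proof of Theorem~\ref{cor:mixlen-sym}. For the inductive step $n \ge 6$, assuming $\Alt{n-1}$ is mixable, I apply Proposition~\ref{prop:H-and-G/H} with the subgroup $\Alt{n-1}\le\Alt{n}$: the natural action of $\Alt{n}$ on $\{1,\dots,n\}$ has point stabilizer $\Alt{n-1}$ and is mixable in at most $\floor{\log_2 n}+\omega(n)$ steps by Proposition~\ref{prop:mix-An-act}. This yields the recurrence
\[
\mixlen(\Alt{n}) \le \mixlen(\Alt{n-1}) + \floor{\log_2 n} + \omega(n).
\]

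The harder step is the base case $n=5$, since $\Alt{4}$ is not mixable and the chain of reductions cannot be extended one more step along the natural action. To handle $\Alt{5}$, I would switch to a different mixable subgroup: the setwise stabilizer in $\Alt{5}$ of an unordered pair $\{i,j\}\subset\{1,\dots,5\}$. Direct computation shows this stabilizer is the ``diagonal'' copy of $\Sym{3}$ of order~$6$ and index~$10$ (the even elements of $\Sym{\{i,j\}}\times\Sym{\{1,\dots,5\}\setminus\{i,j\}}$), which is mixable in $3$ steps by Example~\ref{exmpl:Sn-mix}. By Proposition~\ref{prop:H-and-G/H} it then suffices to mix the action of $\Alt{5}$ on the $10$ unordered pairs, which I would construct explicitly using involutions of $\Alt{5}$ in the spirit of Proposition~\ref{prop:mix-An-act}, but working with pairs rather than single points. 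A mixing of $\Alt{5}$ of length at most $11$ follows.

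Once the base case is in hand, iterating the recurrence gives
\[
\mixlen(\Alt{n}) \le \mixlen(\Alt{5}) + \sum_{k=6}^{n} (\floor{\log_2 k} + \omega(k)),
\]
and the bookkeeping is identical to that in the proof of Theorem~\ref{cor:mixlen-sym}: using the estimate $\sum_{k=1}^{n}\omega(k)\le \tfrac{1}{2}(\floor{\log_2 n}+1)(n+1)$ and absorbing the constant $\mixlen(\Alt{5})$ into the slack, the claimed upper bound $\tfrac{3}{2}\floor{\log_2(n!)}+\tfrac{1}{2}n$ follows. The lower bound $\log_2(n!)-1=\log_2|\Alt{n}|$ is simply the entropy bound of Remark~\ref{rem:entropy-bound}.

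The main obstacle is the explicit mixing of the $\Alt{5}$-action on $10$ unordered pairs: this does not fall under Proposition~\ref{prop:mix-An-act} (which handles only the point action of $\Alt{n}$), and requires a tailored combinatorial construction leveraging the $2$-transitivity of $\Alt{5}$ on $\{1,\dots,5\}$.
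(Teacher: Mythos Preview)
Your inductive framework is sound, and the subgroup you identify for the base case---the setwise stabilizer of a pair, isomorphic to $\Sym{3}$---is exactly right. But the obstacle you flag at the end is not a real obstacle, and no bespoke construction is needed. To mix the $\Alt{5}$-action on pairs (indeed, the $\Alt{n}$-action on pairs for any $n\ge 5$), mix \emph{ordered} pairs coordinate by coordinate: apply Proposition~\ref{prop:mix-An-act} once for $\Alt{n}$ acting on $\{1,\dots,n\}$ to place the first entry, then again for the point stabilizer $\Alt{n-1}$ acting on the remaining $n-1$ points to place the second. Both calls are legitimate once $n-1\ge 4$. The pointwise stabilizer of an ordered pair is $\Alt{n-2}$, which sits inside the setwise stabilizer $\Sym{n-2}\le\Alt{n}$, so Proposition~\ref{prop:G/K-and-G/H} transfers the mixing to the unordered-pair action at no extra cost.

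This is exactly what the paper does, but \emph{uniformly for every $n\ge 5$} rather than only at the base: instead of inducting along $\Alt{5}\le\Alt{6}\le\cdots$, the paper reduces each $\Alt{n}$ directly to the mixable subgroup $\Sym{n-2}$ via Propositions~\ref{prop:G/K-and-G/H} and~\ref{prop:H-and-G/H}, plugging in the already-established bound for $\mixlen(\Sym{n-2})$ from Theorem~\ref{cor:mixlen-sym}. This sidesteps the separate base case entirely. A further caveat on your route: the bookkeeping is \emph{not} identical to Theorem~\ref{cor:mixlen-sym}, because Proposition~\ref{prop:mix-An-act} costs $\floor{\log_2 k}+\omega(k)$ per step rather than $\floor{\log_2 k}+\omega(k)-1$, so your recurrence accumulates an additional $+1$ at each of the $n-5$ inductive steps. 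The stated upper bound still survives, but establishing it requires a sharper estimate than the inequality used in the $\Sym{n}$ proof; the paper's route incurs only a constant excess over the $\Sym{n}$ sum and so avoids this issue.
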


\begin{proof}
  Consider the action of $\Alt{n}$ on $X=\set{\set{i,j}\suchthat 1\le i<j\le n}$ by applying the permutation pointwise, which is transitive when $n\ge 4$. We first claim that this action is mixable for all $n\ge 5$, in which case it can be mixed within at most $\floor{\log_2(n-1)}+\floor{\log_2 n}+\omega(n-1)+\omega(n)$ steps. Indeed, we can mix the first coordinate by the action of $\Alt{n}$ on $\set{1,\dots,n}$, and the second one by the action of the stabilizer $\Alt{n-1}$, with the above number of steps.

  Next, the stabilizer of the tuple $(n-1,n)$ under the above action is $\Alt{n-2}\le \Alt{n}$, which is contained in a copy of $\Sym{n-2}\le \Alt{n}$. Since the action of $\Alt{n}$ on $X$ is mixable, and $\Sym{n-2}$ is mixable, we may apply \Pref{prop:H-and-G/H} and \Pref{prop:G/K-and-G/H} and deduce that $\Alt{n}$ is mixable and
  \begin{align*}
    \mixlen(\Alt{n}) & \le \mixlen(\Alt{n},X) + \mixlen(\Sym{n-2}) \\
    & \le \floor{\log_2(n-1)}+\floor{\log_2 n}+\omega(n-1)+\omega(n) \\
    & \ \ \ + \sum_{k=1}^{n-2}(\floor{\log_2 k}+\omega(k)-1) \\
    & \le \frac{3}{2}\floor{\log_2 (n!)} + \frac{1}{2}n.
  \end{align*}
\end{proof}

\section{Mixability and representations}\label{sec:rep-theory}

mixability can be interpreted in terms of group representations. We begin with brief preliminaries from representation theory, and then relate it to mixability.

\subsection{Preliminaries}

Recall that any probability measure $\mu$ on $G$ can be viewed as an element $\widetilde{\mu}=\sum_{g\in G}\mu(g)g$ in the group algebra $\R[G]$, with positive coefficients summing up to $1$. For instance, the uniform measure $U_G$ on $G$ corresponds to the central idempotent $\widetilde{U_G}=\frac{1}{\left|G\right|}\sum_{g\in G}g$. Recall further that the convolution of two probability measures $\mu,\nu$ on $G$, given by
\[
    \mu*\nu(x):=\sum_{y\in G}\mu(xy^{-1})\nu(y)
\]
for any $x\in G$, corresponds under this identification to multiplication in the group algebra, i.e.\ $\widetilde{\mu*\nu}=\widetilde{\mu}\cdot\widetilde{\nu}$, which is again a probability measure.

Passing to the group algebra over $\C$, the \textbf{Fourier transform} of an element $f=\sum_{g\in G}\alpha_g g\in\C[G]$ at a representation $\rho\colon G\to\GL[d_{\rho}](\C)$ is the matrix given by
\[
    \widehat{f}(\rho) = \sum_{g\in G}\alpha_g\rho(g) \in \M[d_{\rho}](\C).
\]
Then one has $\widehat{f_1f_2}(\rho)=\widehat{f_1}(\rho)\widehat{f_2}(\rho)$ for all $f_1,f_2\in\C[G]$. Writing $\widehat{G}$ for the set of all irreducible representations of $G$, one can express the coefficients $\set{\alpha_g}$ using the inverse Fourier transform as
\[
    \alpha_g = \frac{1}{\left|G\right|}\sum_{\rho\in\widehat{G}} d_{\rho}\tr(\rho(g^{-1})\widehat{f}(\rho)).
\]

\subsection{Mixability via representations}

As mentioned in the Introduction, we can reformulate mixability as a special decomposition of the idempotent $\widetilde{U_G}=\frac{1}{\left|G\right|}\sum_{g\in G} g$ in the groups algebra $\C[G]$:

\begin{rem}[The Decomposition Criterion]
  A finite group $G$ is mixable if and only if there are elements $g_1,\dots,g_k\in G$ and probabilities $p_1,\dots,p_k\in[0,1]$ such that
  \[
    \prod_{i=1}^k ((1-p_i)e+p_ig_i) = \frac{1}{\left|G\right|}\sum_{g\in G}g.
  \]
\end{rem}

By Maschke's theorem, the group algebra $\C[G]$ decomposes as a direct sum of matrix algebras, corresponding to the irreducible representations of~$G$. It is therefore natural to look for a mixability condition in terms of the representations of $G$.

\begin{defn}
    Let $G$ be a finite group. We say that an irreducible representation $\rho\colon G\to\GL[d](\C)$ of $G$ is \textbf{mixable} if there are elements $g_1,\dots,g_k\in G$ and probabilities $p_1,\dots,p_k\in[0,1]$ such that
    \[
        \prod_{i=1}^k((1-p_i)I+p_i\rho(g_i))=0.
    \]
    In this case we call $\relem{g_1}{p_1},\dots,\relem{g_k}{p_k}$ a \textbf{mixing sequence} of $\rho$.
\end{defn}
More generally, we say that a representation of $G$ is {\bf{mixable}} if its nontrivial irreducible components are mixable. Note that
the trivial representation cannot be mixed because then $(1-p)I+p\rho(g)=1$ for any $g \in G$ and any~$p$.

\begin{thm}\label{thm:rep-mix}
    A finite group $G$ is mixable if and only if the regular representation of $G$ is mixable.
\end{thm}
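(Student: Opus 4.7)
The plan is to translate everything through the Wedderburn/Fourier decomposition $\C[G] \cong \bigoplus_{\rho\in\widehat{G}} \M[d_{\rho}](\C)$, under which the idempotent $\widetilde{U_G}=\frac{1}{|G|}\sum_g g$ corresponds to the tuple with $I_{d_\rho}$ in the trivial-representation coordinate and $0$ in every other coordinate. Since the regular representation contains every irreducible $\rho\in\widehat{G}$, its mixability (as defined just before the theorem) is equivalent to the assertion that every nontrivial irreducible $\rho$ is mixable, i.e., admits its own mixing sequence.

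For the forward direction, suppose $\relem{g_1}{p_1},\dots,\relem{g_k}{p_k}$ is a mixing sequence of $G$, so that $\prod_{i=1}^k((1-p_i)e+p_ig_i)=\widetilde{U_G}$ in $\C[G]$. Applying an irreducible $\rho$ (extended to $\C[G]$), using that $\rho$ is a homomorphism of algebras, I get
\[
    \prod_{i=1}^k((1-p_i)I+p_i\rho(g_i)) = \widehat{\widetilde{U_G}}(\rho),
\]
which equals $0$ whenever $\rho$ is nontrivial. So the same sequence mixes every nontrivial $\rho$, and the regular representation is mixable.

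For the converse, I would use concatenation: given mixing sequences $S_\rho$ for each nontrivial irreducible $\rho\in\widehat{G}$, form the concatenation $S = S_{\rho_1}\cdot S_{\rho_2}\cdots S_{\rho_r}$ over an enumeration of the nontrivial irreducibles. In the matrix algebra $\M[d_\sigma](\C)$, the image of the $\prod_{\relem{g}{p}\in S}((1-p)I+p\sigma(g))$ factors as a product whose $S_\sigma$-block equals $0$, hence the entire product vanishes for every nontrivial $\sigma$. On the trivial representation, each factor $(1-p)+p\cdot 1 = 1$, so the product equals $1$. Matching coordinates in $\bigoplus_\rho \M[d_\rho](\C)$, the corresponding element of $\C[G]$ must be $\widetilde{U_G}$. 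Hence $S$ is a mixing sequence of $G$.

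The only nontrivial ingredient is the observation that concatenation multiplies the matrix products, combined with the fact that a product in $\M[d](\C)$ containing a zero factor is zero; beyond this the argument is a direct unpacking of the Wedderburn decomposition, so I do not expect a real obstacle. The one thing to state carefully is that the notions agree on the trivial representation automatically (every factor acts as $1$), ensuring that the trivial-component coordinate of $\widetilde{U_G}$ is matched with no extra work.
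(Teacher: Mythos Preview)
Your proposal is correct and follows essentially the same approach as the paper: both directions apply irreducible representations to the group-algebra identity, and the converse concatenates per-representation mixing sequences so that each nontrivial $\rho$ sees a zero factor. The only cosmetic difference is that the paper recovers the coefficients of $f=\prod_i((1-p_i)e+p_ig_i)$ via the explicit inverse Fourier transform formula, whereas you phrase the same step as ``matching coordinates'' in the Wedderburn decomposition $\C[G]\cong\bigoplus_\rho \M[d_\rho](\C)$; these are the same computation.
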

Recall that the regular representation decomposes into a direct sum of all the irreducible representations (each $d$-dimensional representation repeated~$d$ times).
\begin{proof}
    If $G$ is mixable, then there exist elements $g_1,\dots,g_k\in G$ and probabilities $p_1,\dots,p_k\in[0,1]$ such that
    \[
        \prod_{i=1}^k ((1-p_i)e+p_ig) = \frac{1}{\left|G\right|}\sum_{g\in G}g.
    \]
    Let $\rho$ be a nontrivial irreducible representation of $G$. Applying $\rho$ on the above equation yields
    \[
        \prod_{i=1}^k ((1-p_i)I+p_i\rho(g)) = \frac{1}{\left|G\right|}\sum_{g\in G}\rho(g).
    \]
    Since $\rho$ is nontrivial and irreducible, it follows that $\sum_{g\in G}\rho(g)=0$, proving that $\rho$ is mixable.

    Conversely, suppose that all nontrivial irreducible representations of~$G$ are mixable. Concatenating all sequences of elements from each nontrivial irreducible representation, we conclude that there exist elements $g_1,\dots,g_k\in G$ and probabilities $p_1,\dots,p_k\in[0,1]$ such that
    \begin{equation}\label{eq:mix-reps-f}
        \prod_{i=1}^k ((1-p_i)I+p_i\rho(g)) = 0
    \end{equation}
    for any nontrivial irreducible representation $\rho$ of $G$.

    Write $f=\prod_{i=1}^k ((1-p_i)e+p_ig)\in\C[G]$; we will prove $f=\frac{1}{\left|G\right|}\sum_{g\in G}g$, which shows that $G$ is mixable. Indeed, for any nontrivial irreducible representation $\rho$ of $G$ we have $\widehat{f}(\rho)=0$ by \eqref{eq:mix-reps-f}. The inverse Fourier transform formula then shows that $f=\sum_{g\in G}\alpha_g g$, where
    \[
        \alpha_g = \frac{1}{\left|G\right|}\sum_{\rho\in\widehat{G}} d_{\rho}\tr(\rho(g^{-1})\widehat{f}(\rho)) = \frac{1}{\left|G\right|}\tr(\One(g^{-1})\widehat{f}(\One)) = \frac{1}{\left|G\right|}
    \]
    ($\One$ denotes the trivial representation of $G$). This concludes the proof.
\end{proof}

For example, we immediately get the following special case of \Tref{thm:odd-order} (which implies it if one cares to apply Feit-Thompson, since every odd-order group has a cyclic quotient).
\begin{cor}\label{quickodd}
A cyclic group of odd order $n$ cannot be mixable.
\end{cor}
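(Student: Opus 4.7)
The plan is to invoke \Tref{thm:rep-mix} and exhibit a nontrivial irreducible representation of the cyclic group $\Z/n\Z$ that cannot be mixed. Since $\Z/n\Z$ is abelian, its irreducible representations are one-dimensional characters $\rho\colon \Z/n\Z\to\C^{\times}$; pick any nontrivial one, say $\rho(1)=\zeta$ with $\zeta\ne 1$ a (necessarily $n$-th) root of unity. The point is that $1$-dimensional mixability is a very restrictive condition: a product of complex numbers vanishes if and only if one of the factors does.

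The key computation will be to determine when a single factor $(1-p)+p\zeta'$ can equal zero, where $p\in[0,1]$ and $\zeta'=\rho(g)$ is a root of unity. Solving $(1-p)+p\zeta'=0$ gives $\zeta'=1-\frac{1}{p}$, which is a real number in $(-\infty,0]$ for $p\in(0,1]$; the only root of unity lying in this range is $\zeta'=-1$, forcing $p=\frac{1}{2}$. Hence a mixing sequence for $\rho$ would require some group element $g_i$ with $\rho(g_i)=-1$.

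Now I would use the hypothesis that $n$ is odd: the values of $\rho$ lie in the group $\mu_n$ of $n$-th roots of unity, and since $n$ is odd, $-1\notin\mu_n$. Therefore no factor $(1-p_i)+p_i\rho(g_i)$ can ever vanish, and the product can never be $0$. Thus $\rho$ is not mixable, and by \Tref{thm:rep-mix} neither is $\Z/n\Z$.

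There is no serious obstacle here; the only thing to be careful about is the boundary cases $p=0$ and $p=1$, which trivially give a nonzero factor (either $1$ or $\zeta'\ne 0$). The argument is essentially a one-line reduction to the observation that $-1$ is not an odd-order root of unity, and it illustrates the Decomposition Criterion cleanly before the representation-theoretic machinery is used for more intricate groups later in the paper.
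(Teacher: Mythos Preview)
Your proof is correct and follows essentially the same approach as the paper: both argue that a nontrivial one-dimensional representation $\rho$ of $\Z/n\Z$ cannot be mixed because no factor $(1-p)+p\rho(g)$ can vanish, the obstruction being that $-1$ is not an $n$-th root of unity when $n$ is odd. Your write-up is in fact slightly more careful than the paper's, which asserts that $\rho(g^i)$ is never real for $i\ne 0$ (overlooking the possibility $\rho(g^i)=1$ when $\rho$ is not faithful), whereas you correctly isolate $-1$ as the only problematic value.
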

\begin{proof}
We show that no nontrivial irreducible representation~$\rho$ of the cyclic group $\sg{g | g^n = 1}$ is mixable when $n$ is odd. Indeed $\rho(g^i)$ is not real for any $0 \neq i \in \Z/n\Z$, so $(1-p)+p\rho(g^i)$ is never zero.
\end{proof}

\subsection{The induced representation}

Let $G$ be a group with a mixable subgroup $H \leq G$. When $H$ is normal, \Tref{thevar} implies that~$G$ is mixable if and only if $G/H$ is mixable. But we can gain a lot from mixability of a nonnormal subgroup.

A representation of $G$ is automatically a representation of $H$, by restriction. On the other hand, if $\rho$ is a representation of of $H$, there is an induced representation $\rho|^G$, of dimension $\dimcol{G}{H}\dim \rho$, which may be viewed as a sending each $g \in G$ to a permutation block matrix. Let $\One_H$ denote the trivial representation of~$H$. Thus $(\One_H)|^G$ is a representation of dimension $\dimcol{G}{H}$ of $G$. In fact, the induced representation $(\One_H)^G$ is isomorphic to the action of $G$ on $G/H$, when the latter is viewed as a map sending each $g \in G$ to a permutation matrix indexed by $G/H$. This can be used to prove the next theorem by \Pref{prop:H-and-G/H}, but we prefer to also give a proof through representations.

\begin{thm}\label{thm:rep-mix++}
Let $G$ be a finite group with a mixable subgroup~$H$. Then $G$ is mixable if and only if $(\One_H)|^G$ is mixable.
\end{thm}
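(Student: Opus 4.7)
The plan is to prove both directions using the representation-theoretic criterion \Tref{thm:rep-mix}, splitting the irreducibles of $G$ via Frobenius reciprocity according to whether they appear in $(\One_H)|^G$.

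For the forward direction, if $G$ is mixable, then by \Tref{thm:rep-mix} every nontrivial irreducible representation of $G$ is mixable. Since $(\One_H)|^G$ decomposes as a direct sum of irreducibles of $G$ (containing $\One_G$ with multiplicity one by Frobenius reciprocity, along with other constituents), each of its nontrivial irreducible components is mixable, and hence $(\One_H)|^G$ is mixable. Note that this direction uses only mixability of $G$, not of $H$.

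For the backward direction, suppose $H$ is mixable with mixing sequence $(h_1,q_1),\dots,(h_m,q_m)$ and that $(\One_H)|^G$ is mixable with mixing sequence $(g_1,p_1),\dots,(g_k,p_k)$. By \Tref{thm:rep-mix}, it suffices to show that every nontrivial irreducible representation $\pi$ of $G$ is mixable. By Frobenius reciprocity, $\pi$ occurs as a constituent of $(\One_H)|^G$ if and only if the space $V_\pi^H$ of $H$-fixed vectors in $\pi$ is nonzero, which gives a natural case split. If $V_\pi^H \neq 0$, then $\pi$ embeds in $(\One_H)|^G$, so the given mixing sequence for $(\One_H)|^G$ already kills $\pi$. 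If $V_\pi^H = 0$, we use mixability of $H$: in $\C[H]$ we have $\prod_i ((1-q_i)e + q_ih_i) = \frac{1}{\card{H}}\sum_{h \in H} h$, and applying $\pi|_H$ to both sides yields
\[
    \prod_i ((1-q_i)I + q_i\pi(h_i)) = \frac{1}{\card{H}}\sum_{h \in H}\pi(h),
\]
which is the projection onto $V_\pi^H = 0$ and hence vanishes. Concatenating both sequences in $G$ produces a single sequence under which, for every nontrivial irreducible $\pi$ of $G$, at least one of the two sub-products is zero; since a single zero factor annihilates the full product, the concatenation mixes every nontrivial irreducible of $G$, and \Tref{thm:rep-mix} finishes the proof.

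The main conceptual step is the Frobenius reciprocity dichotomy; both resulting cases are then handled cleanly by exactly one of the two given mixings, so I anticipate no genuine obstacle beyond correctly recording the algebraic dictionary furnished by \Tref{thm:rep-mix}.
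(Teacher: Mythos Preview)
Your proof is correct and follows essentially the same route as the paper: reduce to irreducibles via \Tref{thm:rep-mix}, and use Frobenius reciprocity to split the nontrivial irreducibles of $G$ into those appearing in $(\One_H)|^G$ (handled by that representation's mixability) and those not appearing (handled by the mixability of $H$). The only cosmetic difference is that you dispatch the second case via the projector identity $\frac{1}{\card{H}}\sum_{h\in H}\pi(h)=$ projection onto $V_\pi^H$, whereas the paper decomposes $\pi|_H$ into $H$-irreducibles and notes each nontrivial summand is mixable; these are equivalent, and your phrasing is arguably slightly cleaner.
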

\begin{proof}
By the previous theorem, $G$ is mixable if and only if all the nontrivial irreducible representations of $G$ are mixable, so let~$\rho$ be a nontrivial irreducible representation over $G$.

The reduced representation $\rho|_H$ decomposes as a direct sum $\rho|_H = \psi_1 \oplus \cdots \oplus \psi_s$, where the $\psi_j$ are irreducible. Since $H$ is mixable, every nontrivial $\psi_j$ is mixable by $H$, and consequently by $G$. This shows that if $\One_H$ is not a subrepresentation of $\rho|_H$, then $\rho|_H$ is mixable over $H$, so~$\rho$ is mixable over $G$.

The Hermitian inner product in $\C[G]$ induces an inner product on the space of class functions, which is the space spanned by characters, and these are in one-to-one correspondence with representations. The same holds for $H$. The inner product $\innprod{\One_H}{\rho|_H}_H$ counts how many times $\One_H$ appears as a direct summand in the irreducible decomposition of $\rho|_H$.  Similarly, $\innprod{(\One_H)|^G}{\rho}_G$ counts how many times $\rho$ appears as a direct summand in $(\One_H)^G$.
By Frobenius' reciprocity theorem, the two numbers are equal. Summarizing, if $\rho$ is not contained in the induced representation $(\One_H)|^G$, then
$\innprod{\One_H}{\rho|_H}_H = \innprod{(\One_H)|^G}{\rho}_G = 0$, so~$\rho$ is taken care of by the mixability of $H$. The remaining irreducible representations are those contained in $(\One_H)^G$.
\end{proof}

The full potential of this theorem is when $H_1,\dots,H_k$ are the maximal known mixable subgroups of $G$. Then, by the proof of this theorem, mixability of $G$ follows once we verify that the nontrivial subrepresentations which are summands in each induced representation $(\One_{H_j})|^G$ are mixable. (It suffices to take one representative from each conjugacy class of subgroups).

A more direct application follows from the fact that a $2$-Sylow subgroup is always mixable by \Cref{cor:2-groups}.
\begin{cor}
Let $P$ be a $2$-Sylow subgroup of a finite group~$G$. Then~$G$ is mixable if and only if $(\One_P)|^G$ is mixable.
\end{cor}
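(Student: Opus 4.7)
The plan is to apply \Tref{thm:rep-mix++} directly, taking the subgroup $H$ to be the given $2$-Sylow subgroup $P$. The only thing to check is the hypothesis of that theorem, namely that $P$ itself is mixable as a group.

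First I would invoke \Cref{cor:2-groups}, which asserts that every finite $2$-group is mixable (and in fact mixable in $\log_2|P|$ steps). Since $P$ is by definition a $2$-group, this immediately gives that $P$ is a mixable subgroup of $G$.

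Having verified the hypothesis, I would then apply \Tref{thm:rep-mix++} with $H=P$: it states that under this hypothesis, $G$ is mixable if and only if the induced representation $(\One_P)|^G$ is mixable. This is exactly the claim of the corollary, so the proof is complete.

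There is no real obstacle here; the corollary is essentially a packaging of the previous theorem together with the observation that the natural choice of mixable subgroup in an arbitrary finite group is a Sylow $2$-subgroup. The only potentially interesting point to flag for the reader is that the relevant representation $(\One_P)|^G$ can equivalently be viewed as the permutation representation on the coset space $G/P$, which gives a concrete combinatorial object to analyze in applications.
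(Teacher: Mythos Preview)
Your proof is correct and matches the paper's own argument exactly: the paper introduces the corollary by noting that a $2$-Sylow subgroup is always mixable by \Cref{cor:2-groups}, and then the statement is an immediate application of \Tref{thm:rep-mix++} with $H=P$. Your added remark that $(\One_P)|^G$ is the permutation representation on $G/P$ also appears in the paper's discussion preceding the theorem.
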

This is a strict improvement over \Tref{thm:rep-mix}, since $\dimcol{G}{P} < \card{G}$ for any even order group $G$ (the odd order groups are not mixable).

\subsection{Necessary conditions for mixability}

We turn to give a necessary condition for the mixability of a representation. The heart of the argument lies in the following lemma:

\begin{lem}\label{lem:reps-sing}
  Let $G$ be a finite group, let $g\in G$ be some element of~$G$, let $0\le p\le 1$, and let $\rho\colon G\to\GL[d](\C)$ be a representation of $G$.

  Then the matrix $(1-p)I+p\,\rho(g)$ is singular if and only if $-1$ is an eigenvalue of $\rho(g)$ and $p=\frac{1}{2}$. In this case, the order of $g$ must be even.
\end{lem}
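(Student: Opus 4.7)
The plan is to reduce the singularity condition to a statement about eigenvalues. Since $g$ has finite order in $G$, the matrix $\rho(g)$ has finite multiplicative order, hence is diagonalizable with eigenvalues that are roots of unity. The eigenvalues of $(1-p)I+p\,\rho(g)$ are therefore exactly $(1-p)+p\lambda$ as $\lambda$ ranges over the eigenvalues of $\rho(g)$, and singularity of the matrix is equivalent to $(1-p)+p\lambda = 0$ for some such $\lambda$.

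Next I would analyze when $(1-p)+p\lambda=0$ can hold with $\lambda$ a root of unity and $0\le p\le 1$. The case $p=0$ gives the identity, which is nonsingular, so assume $p>0$; then the equation rearranges to $\lambda = 1 - 1/p$, which is a real number. Since $\lambda$ is a root of unity, it lies on the unit circle, so a real value forces $\lambda \in \{+1,-1\}$. The value $\lambda=1$ would require $1/p=0$, which is impossible, so we must have $\lambda=-1$, and then $1-1/p=-1$ forces $p=\tfrac{1}{2}$. This gives both directions of the equivalence: $(1-p)I+p\,\rho(g)$ is singular if and only if $-1$ is an eigenvalue of $\rho(g)$ and $p=\tfrac{1}{2}$.

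Finally, I would note that if $-1$ is an eigenvalue of $\rho(g)$, then $\rho(g)\ne I$, and raising to the $|g|$-th power gives $\rho(g)^{|g|}=I$, so every eigenvalue $\lambda$ of $\rho(g)$ satisfies $\lambda^{|g|}=1$. Taking $\lambda=-1$ yields $(-1)^{|g|}=1$, so $|g|$ must be even. The only step that requires any care is the observation that roots of unity which are real are exactly $\pm 1$, which is elementary, so I do not expect any significant obstacle in this proof.
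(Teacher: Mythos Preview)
Your proof is correct and follows essentially the same approach as the paper: both reduce singularity to the condition that $-\frac{1-p}{p}$ (equivalently, $1-1/p$) is an eigenvalue of $\rho(g)$, observe that eigenvalues are roots of unity and hence the only real possibilities are $\pm 1$, rule out $+1$, and conclude $p=\tfrac{1}{2}$ with $\lambda=-1$, forcing the order of $g$ to be even. Your version is slightly more explicit about handling $p=0$ and about diagonalizability, but the argument is the same.
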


\begin{proof}
  We first note that $(1-p)I+p\,\rho(g)$ is singular if and only if $-\frac{1-p}{p}$ is an eigenvalue of $\rho(g)$. Recall that if $g$ has order $n$ in $G$, then $\rho(g)^n=I$, hence all eigenvalues of $\rho(g)$ are roots of unity of order dividing $n$. Since $-\frac{1-p}{p}$ is a real number, this is only possible if $-\frac{1-p}{p}=\pm 1$, which is only possible if $p=\frac{1}{2}$ and $-\frac{1-p}{p}=-1$. Furthermore, in this case $n$ must be even, since $(-1)^n=1$.
\end{proof}

Following this lemma, we say that an irreducible representation $\rho$ of $G$ is {\bf{potentially mixable}} if there is $a \in G$ such that $\rho(a)$ has eigenvalue $-1$. We show that this is a necessary condition:

\begin{cor}\label{cor:reps--1-ev}
  Let $G$ be a finite group. Then any mixable irreducible representation $\rho\colon G\to\GL[d](\C)$ is potentially mixable.

  Furthermore, let $\relem{g_1}{p_1},\dots,\relem{g_k}{p_k}$ be a mixing sequence of $\rho$, of minimal length. Then $p_1 = p_k = \frac{1}{2}$, and the order of~$g_1$ and~$g_k$ is even. In fact,~$g_1$ and~$g_k$ can be chosen to have order a power of $2$.
\end{cor}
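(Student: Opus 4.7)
The plan is to set $A_i := (1-p_i)I + p_i\rho(g_i) \in \M[d](\C)$ for each $i$, so that a mixing sequence for $\rho$ is precisely the identity $A_1 A_2 \cdots A_k = 0$, and then to combine this vanishing product with \Lref{lem:reps-sing}.

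First I would dispatch the potential mixability claim: a vanishing product of matrices must contain at least one singular factor, and by \Lref{lem:reps-sing} any singular $A_i$ supplies an element $a := g_i$ such that $\rho(a)$ has eigenvalue $-1$. Potential mixability is therefore immediate.

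For the furthermore part, the key observation is that in a minimal mixing sequence both $A_1$ and $A_k$ must be singular: if $A_1$ were invertible, left-multiplying $A_1 A_2 \cdots A_k = 0$ by $A_1^{-1}$ would yield $A_2 \cdots A_k = 0$, producing a mixing sequence of length $k-1$ and contradicting minimality; the symmetric argument on the right handles $A_k$. Applying \Lref{lem:reps-sing} to the singular factors $A_1$ and $A_k$ then forces $p_1 = p_k = \tfrac12$ and also gives that $g_1$ and $g_k$ have even order.

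The final assertion---that $g_1$ and $g_k$ can be chosen of $2$-power order---is the step I expect to be the main obstacle. Writing $\operatorname{ord}(g_1) = 2^s m$ with $m$ odd, I would replace $g_1$ by $h := g_1^m$, whose order is $2^s$, and verify that the product remains zero. Using the eigenspace decomposition $\C^d = \bigoplus_\lambda V_\lambda$ under $\rho(g_1)$ together with the identity $(-1)^m = -1$, one obtains
\[
\ker\bigl(\tfrac12(I + \rho(h))\bigr) \;=\; \bigoplus_{\lambda^m = -1} V_\lambda \;\supseteq\; V_{-1} \;=\; \ker A_1.
\]
Since $A_1 A_2 \cdots A_k = 0$ already forces $\operatorname{im}(A_2 \cdots A_k) \subseteq \ker A_1$, it lies in the larger kernel above, so replacing $A_1$ by $\tfrac12(I + \rho(h))$ preserves the vanishing. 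For $g_k$ the symmetric argument uses images rather than kernels: if $m'$ is the odd part of $\operatorname{ord}(g_k)$ and $\{W_\mu\}$ are the eigenspaces of $\rho(g_k)$, then $\operatorname{im}(\tfrac12(I + \rho(g_k^{m'}))) = \bigoplus_{\mu^{m'} \neq -1} W_\mu \subseteq \bigoplus_{\mu \neq -1} W_\mu = \operatorname{im}(A_k)$, which already lies in $\ker(A_1 \cdots A_{k-1})$, so the product still vanishes after replacement.
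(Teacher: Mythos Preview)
Your argument is correct and follows the same overall structure as the paper: both set $A_i=(1-p_i)I+p_i\rho(g_i)$, use minimality to force $A_1$ and $A_k$ singular, and invoke \Lref{lem:reps-sing}. The only difference is in the final ``power of $2$'' step. The paper uses the polynomial identity $1+x^m=(1-x+x^2-\cdots+x^{m-1})(1+x)$ for odd $m$, applied to $x=\rho(g_1)$, to write $\tfrac12(I+\rho(g_1^m))=A\cdot A_1$ for an explicit matrix $A$; the vanishing $A_1A_2\cdots A_k=0$ then immediately gives $\tfrac12(I+\rho(g_1^m))A_2\cdots A_k=0$, and the same identity handles $g_k$ by multiplying on the right. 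Your eigenspace argument reaches the same conclusion via the containments $\ker A_1\subseteq\ker\tfrac12(I+\rho(g_1^m))$ and $\operatorname{im}\tfrac12(I+\rho(g_k^{m'}))\subseteq\operatorname{im}A_k$, which is equally valid (using diagonalizability of $\rho(g_i)$) but requires a separate kernel/image treatment for the two ends, whereas the factorization handles both sides uniformly.
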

\begin{proof}
  Assume that $\relem{g_1}{p_1},\dots,\relem{g_k}{p_k}$ is a mixing sequence of $\rho$, of minimal length. Denote $a_i = (1-p_i)I+p_i \rho(g_i) \in \M[d](\C)$ where $d = \chi_{\rho}(1)$ is the dimension of the representation. Then $a_1 \cdots a_k = 0$ in the group algebra. By minimality, $a_1$ and $a_k$ are singular. Then \Lref{lem:reps-sing} shows that $p_1=p_k=\frac{1}{2}$, that $-1$ is an eigenvalue of both $\rho(g_1)$ and $\rho(g_k)$, and that the order of $g_1$ and $g_k$ in $G$ is even.

  It remains to show that $g_1$ can be replaced by some $g_1'\in G$ of order a power of $2$ (the claim for  $g_k$ is similar). Write the order of $g_1$ in $G$ as $2^tm$, where $m$ is odd, and let $g_1'=g_1^m$. Note that
  \[
    \frac{1}{2}I+\frac{1}{2}\rho(g_1') = \underbrace{\left(I-\rho(g_1)+\cdots+\rho(g_1^{m-1})\right)}_A\left(\frac{1}{2}I+\frac{1}{2}\rho(g_1)\right),
  \]
  and thus by $\left(\frac{1}{2}I+\frac{1}{2}\rho(g_1')\right)a_2 \cdots a_k = A a_1 \cdots a_k = 0$.
\end{proof}

While being potentially mixable seems only like a necessary criterion for an irreducible representation to be mixable, we conjecture that this is also sufficient; see \Sref{sec:struc} for a further discussion and partial results in this direction.

The representation viewpoint allows us to give alternative proofs for some of our previous results. For instance, \Tref{thm:odd-order}, stating that any nontrivial group $G$ of odd order cannot be mixable, follows immediately from the second part of the above corollary. Additionally, several results of \Sref{sec:basic-defs} can be proved by analyzing appropriate representations of groups.

\begin{rem}
  While the proof of \Cref{cor:reps--1-ev} only used the singularity of matrices of the form $(1-p)I+p\,\rho(g)$, one can extract further information from the dimension of their kernels. Indeed, if
  \[
    \prod_{i=1}^k((1-p_i)I+p_i\rho(g_i))=0,
  \]
  then
  \[
    \sum_{i=1}^k\dim_C\ker((1-p_i)I+p_i\rho(g_i))\ge d.
  \]
  Recalling that $(1-p_i)I+p_i\rho(g_i)$ is non-singular unless $p_i=\frac{1}{2}$, we get
  \[
    \sum_{i:p_i=\frac{1}{2}}\dim_\C\ker(I+\rho(g_i))\ge d,
  \]
  which can be used to get a lower bound on the number of indices $i$ for which $p_i=\frac{1}{2}$. This provides an alternative proof of \cite[Theorem 1.4]{AH}.
\end{rem}

\section{Structure and mixability}\label{sec:struc}

\Tref{thm:odd-order} is the first hint that mixability is related to classical properties of the group. In particular, it directs our attention to the maximal odd-order quotient. For any finite group $G$, let $U(G)$ denote the normal closure of some (and thus every) $2$-Sylow subgroup, which is the subgroup generated by all the $2$-Sylow subgroups. By definition~$U(G)$ is a normal subgroup of odd index. It can be viewed as a counterpart to the maximal normal subgroup of odd order, often denoted $O(G)$ \cite{Segev}.

\begin{prop}\label{maxodd}
Let $G$ be a finite group. Then $U(G)$ is the minimal normal subgroup of odd index.
\end{prop}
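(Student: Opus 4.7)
The plan is to verify the two defining properties of the minimal normal subgroup of odd index: (i) $U(G)$ itself is a normal subgroup of odd index, and (ii) any normal subgroup $N \normali G$ of odd index contains $U(G)$. The first is essentially tautological from the definition, while the second is a standard Sylow-style argument.

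For (i), normality of $U(G)$ is immediate: it is generated by the union of a conjugation-invariant family of subgroups (the set of all $2$-Sylow subgroups of $G$, which are all conjugate to each other). For oddness of the index, fix a $2$-Sylow $P \le G$. Since $P \le U(G) \le G$, we have $\dimcol{G}{U(G)} \mid \dimcol{G}{P}$, and $\dimcol{G}{P}$ is odd by Sylow's theorem.

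For (ii), let $N \normali G$ with $\dimcol{G}{N}$ odd. I would first show that $P \le N$ for some (and hence every) $2$-Sylow $P$. Since $N$ is normal, $PN$ is a subgroup of~$G$, and the second isomorphism theorem gives
\[
    PN/N \;\cong\; P/(P \cap N).
\]
The left hand side is a subgroup of $G/N$, which has odd order, while the right hand side is a quotient of the $2$-group $P$, hence a $2$-group. The only group that is simultaneously a $2$-group and of odd order is trivial, so $P = P\cap N$, i.e., $P \le N$. Normality of $N$ then forces every conjugate of $P$ to lie in $N$ as well, so $U(G) = \sg{P^g : g\in G} \le N$. Combining (i) and (ii) gives the proposition.

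The argument is short and the only mild subtlety is recognizing that we should use $PN/N$ (which makes sense because $N$ is normal) to compare the $2$-group $P/(P\cap N)$ with the odd-order group $G/N$; this is the pivot of the proof.
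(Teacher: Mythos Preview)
Your proof is correct and follows essentially the same approach as the paper: both hinge on the observation that a normal subgroup $N$ has odd index precisely when it contains a (hence every) $2$-Sylow subgroup of $G$. The paper packages this as a single chain of equivalences ($[G:N]$ odd $\Leftrightarrow$ a $2$-Sylow of $N$ is a $2$-Sylow of $G$ $\Leftrightarrow$ $U(G)\le N$), while you separate it into the two directions (i) and (ii) and invoke the second isomorphism theorem explicitly, but the content is the same.
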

\begin{proof}
Let $N$ be any normal subgroup of $G$. Then $\dimcol{G}{N}$ is odd if and only if the same power of $2$ divides both $\card{G}$ and $\card{N}$; if and only if a $2$-Sylow subgroup of $N$ is a $2$-Sylow subgroup of $G$; if and only if~$N$ contains $U(G)$ because of normality.
\end{proof}
Consequently, $G/U(G)$ is the maximal quotient of $G$ of odd order.
Let us say that $G$ is {\bf{$2'$-simple}} if $G$ has no odd-index normal subgroups,
namely, $U(G) = G$. In particular, simple groups are $2'$-simple. By \Pref{maxodd}, $G$ is $2'$-simple if and only if it is generated by its $2$-Sylow subgroups. A subgroup whose normal closure is the full group is called ``contranormal''. Thus, a group is $2'$-simple if and only if its $2$-Sylow subgroups are contranormal.

By \Tref{thm:odd-order}, if $G$ is mixable then $G/U(G)$ must be trivial. Thus:
\begin{cor}
If $G$ is mixable then it is $2'$-simple.
\end{cor}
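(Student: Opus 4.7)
The plan is to combine \Tref{thm:odd-order} with the structural description of $U(G)$ provided by \Pref{maxodd}, so that the statement becomes essentially a contrapositive. First I would recall that by \Pref{maxodd}, $U(G)$ is the minimal normal subgroup of $G$ of odd index; equivalently, $G/U(G)$ is the maximal odd-order quotient of $G$. Thus the condition that $G$ be $2'$-simple (i.e.\ $U(G)=G$) is the same as the condition that $G$ have no nontrivial quotient of odd order.

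Given this reformulation, the argument is immediate. Assume $G$ is mixable. By \Tref{thm:odd-order}, $G$ cannot admit any nontrivial quotient of odd order. In particular, the maximal odd-order quotient $G/U(G)$ must be trivial, which means $U(G)=G$, so $G$ is $2'$-simple.

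Alternatively, and perhaps more in the spirit of the surrounding material, one could phrase it via the closure properties of \Tref{thevar}: since mixability is preserved under quotients (\Cref{cor:quotient}), mixability of $G$ forces mixability of $G/U(G)$. But $G/U(G)$ has odd order, and a nontrivial group of odd order is not mixable by \Tref{thm:odd-order} (or by the special case \Cref{quickodd} combined with the fact that any nontrivial odd-order group has a nontrivial cyclic quotient, e.g.\ through its abelianization, which is nontrivial by Feit--Thompson, though one does not need Feit--Thompson here since \Tref{thm:odd-order} was already proved directly). Hence $G/U(G)$ is trivial, giving $U(G)=G$.

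There is no real obstacle: the content has already been established in \Tref{thm:odd-order} and \Pref{maxodd}, and the corollary is a repackaging. The only thing to be careful about is making explicit the translation between "no nontrivial odd-order quotient" and "$U(G)=G$", which is exactly what \Pref{maxodd} supplies.
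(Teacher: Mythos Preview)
Your proposal is correct and matches the paper's approach exactly: the paper simply notes (in the sentence immediately preceding the corollary) that by \Tref{thm:odd-order}, mixability of $G$ forces $G/U(G)$ to be trivial, which is the same argument you give.
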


Considering all of our examples, it seems as if  the only obstruction to mixability is the existence of a nontrivial quotient of odd order. In fact, all the examples discussed in this paper can be mixed by using only elements of order $2^t$ for some $t\ge 1$. We therefore pose the following conjecture:
\begin{conj}\label{conj:mix-quotients}
A finite group is mixable if and only if it is $2'$-simple.
\end{conj}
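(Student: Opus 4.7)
The plan is to address both directions of the equivalence. The forward direction follows immediately from the results already in hand: if $G$ is mixable and $N \normali G$ has $\dimcol{G}{N}$ odd, then $G/N$ is mixable by \Cref{cor:quotient}, and \Tref{thm:odd-order} forces $G/N$ to be trivial. Taking $N = U(G)$ yields $G = U(G)$, i.e., $G$ is $2'$-simple.

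For the reverse direction, the plan is to invoke the structural reduction promised in the introduction of this section: a finite group is $2'$-simple if and only if it admits a subnormal series whose successive quotients are \twogen. Combined with closure under extensions (\Tref{thevar}), this reduces the conjecture to showing that every \twogen\ finite group is mixable. Note that a \twogen\ group is automatically $2'$-simple, because each generating involution lies in some $2$-Sylow subgroup, so no circularity arises in the reduction.

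For a \twogen\ group $G$, I would invoke \Tref{thm:rep-mix} to convert the problem into mixing each nontrivial irreducible representation $\rho \colon G \to \GL[d_\rho](\C)$. Each such $\rho$ is automatically potentially mixable in the sense of \Cref{cor:reps--1-ev}: among the generating involutions $t_1, \dots, t_m$ of $G$, at least one satisfies $\rho(t_j) \neq I$, and since $\rho(t_j)^2 = I$, this $\rho(t_j)$ must have $-1$ as an eigenvalue. Hence the conjecture reduces to the sharper assertion that, for \twogen\ groups, \emph{potentially mixable} implies \emph{mixable} --- the converse of \Cref{cor:reps--1-ev}.

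The hard step is exactly this promotion of a spectral necessary condition to an explicit vanishing product $\prod_i ((1-p_i)I + p_i \rho(g_i)) = 0$. My attack would route through \Tref{thm:rep-mix++} applied with $H$ a $2$-Sylow subgroup $P$ of $G$ (which is mixable by \Cref{cor:2-groups}), reducing the task to mixing the single induced representation $(\One_P)|^G$ on the cosets $G/P$, one concrete object per group. Partial progress on this route is already available through the constructions elsewhere in the paper --- for $\Sym{n}$, $\Alt{n}$, the Coxeter groups, certain matrix groups, and the sporadic examples --- together with the low-dimensional analysis of \Sref{sec:8} and the $2$-transitive machinery of \Sref{sec:2-transitive}. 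A uniform argument covering all \twogen\ groups presumably requires the Classification of Finite Simple Groups, and constructing an explicit vanishing product inside each nontrivial summand of $(\One_P)|^G$ in a manner that works uniformly across the CFSG families is the genuine obstacle to this plan.
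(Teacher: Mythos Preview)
This statement is a \emph{conjecture}, not a theorem; the paper does not prove it and explicitly leaves the reverse direction open. Your forward direction is correct and is exactly what the paper records (mixable $\Rightarrow$ $2'$-simple via \Cref{cor:quotient} and \Tref{thm:odd-order}).

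Your outline of the reverse direction also tracks the paper faithfully: the reduction to \twogen\ groups via a subnormal series with \twogen\ quotients is precisely the content of \Tref{2'=2+2+2} and the subsequent proposition showing that \Cjref{conj:mix-quotients} and \Cjref{newconj} are equivalent; and the reformulation ``potentially mixable $\Rightarrow$ mixable'' is exactly \Cjref{conj:mix-representations}. But as you yourself acknowledge in your final paragraph, the step you call ``the hard step'' --- promoting the spectral condition to an actual vanishing product for every nontrivial irreducible of every \twogen\ group --- is not carried out. The paper does not carry it out either: it offers partial evidence (dimensions $\le 2$ and real dimension $3$ in \Sref{sec:8}, plus the explicit families in Sections~\ref{sec:perm-groups}, \ref{sec:coxeter}, \ref{sec:10}--\ref{sec:11}) but no general argument. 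So your proposal is not a proof but a correct identification of the reduction pathway and of where the genuine obstruction lies, which is all the paper itself achieves for this statement.
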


To gain some support of this conjecture, let us now utilize \Lref{lem:reps-sing}.

\begin{prop}\label{prop:equiv-conj}
A finite group $G$ is $2'$-simple if and only if any nontrivial representation $\rho\colon G\to\GL[d](\C)$ is potentially mixable.
\end{prop}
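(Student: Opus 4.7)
The plan is to prove the two directions separately, exploiting the characterization in \Pref{maxodd} that $G$ is $2'$-simple precisely when it is generated by its $2$-Sylow subgroups (equivalently, has no nontrivial quotient of odd order).

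For the forward direction, I would assume $G$ is $2'$-simple and let $\rho\colon G\to\GL[d](\C)$ be a nontrivial representation. Because $G$ is generated by its $2$-Sylow subgroups, $\rho$ cannot be trivial on every $2$-Sylow (otherwise $\ker\rho \supseteq U(G) = G$). Pick a $2$-Sylow $P$ on which $\rho|_P$ is nontrivial, and set $N = P \cap \ker\rho$, a proper normal subgroup of the $2$-group $P$. Then $P/N$ is a nontrivial $2$-group, so it contains an element of order~$2$; lifting it to some $g \in P$ yields $\rho(g^2) = I$ with $\rho(g) \neq I$. The minimal polynomial of $\rho(g)$ therefore divides $x^2 - 1$ but is not $x-1$, so $-1$ must be an eigenvalue. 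Thus $\rho$ is potentially mixable.

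For the reverse direction I would argue by contrapositive. If $G$ is not $2'$-simple, then $G/U(G)$ is a nontrivial group of odd order, and in particular admits a nontrivial irreducible representation $\sigma$ (since $|G/U(G)| = \sum_\sigma d_\sigma^2 > 1$ forces an irreducible other than the trivial one). Pulling $\sigma$ back along the quotient map $G \twoheadrightarrow G/U(G)$ gives a nontrivial representation $\rho$ of $G$. Every element of the odd-order group $G/U(G)$ has odd order, so $\rho(g)$ has odd order for every $g \in G$, and hence its eigenvalues are odd-order roots of unity. Since $-1$ has order $2$, it never appears as an eigenvalue of any $\rho(g)$, so $\rho$ is not potentially mixable.

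Conceptually the proof is clean: the two group-theoretic characterizations of $2'$-simplicity line up exactly with the two spectral arguments, so ``generated by $2$-Sylows'' feeds the minimal-polynomial trick in one direction, while ``no odd-order quotient'' supplies an obstructing representation in the other. I do not anticipate a serious technical obstacle; the only mildly subtle point is ensuring that $\rho$ has nontrivial restriction to some $2$-Sylow, which is immediate from the hypothesis that $2$-Sylows generate $G$.
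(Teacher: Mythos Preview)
Your proposal is correct and follows essentially the same approach as the paper's proof: both directions hinge on the same ideas (finding an element of a $2$-Sylow whose image has order exactly $2$ for the forward direction, and pulling back a representation of an odd-order quotient for the reverse). The only cosmetic difference is that the paper locates the order-$2$ image by repeatedly squaring an element $g$ with $\rho(g)\neq I$, whereas you phrase the same step via the quotient $P/(P\cap\ker\rho)$; these are equivalent.
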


\begin{proof}
($\Longrightarrow$) Assume $G$ is $2'$-simple. Let $\rho\colon G\to\GL[d](\C)$ be a nontrivial representation of~$G$. By semisimplicity of $\C[G]$, and since eigenvalues accumulate, we may assume $\rho$ is irreducible. Since $\rho$ is nontrivial, it cannot vanish on all $2$-Sylow subgroups of $G$, as they generate $G$, so there must be some $g\in G$ of order $2^t$ for some $t\ge 1$ such that~$\rho(g)\ne I$. Take the maximal $0\le j\le t-1$ such that $\rho(g^{2^j})\ne I$; then $a= \rho(g^{2^j})$ is a matrix satisfying $a^2=I$ but $a\ne I$, so $-1$ must be an eigenvalue.

($\Longleftarrow$) Assume, on the contrary, that $G$ has an odd-index normal subgroup $N$. Lift any nontrivial irreducible representation of $G/N$ to an irreducible representation $\rho$ of $G$. For any $a\in G$, the eigenvalues of $\rho(a)$ are roots of unity of order dividing the order of~$a$ in $G/N$, which is odd since it divides $\dimcol{G}{N}$. Therefore, $-1$ cannot be an eigenvalue of $\rho(a)$, contradicting the assumption.
\end{proof}

We can thus restate \Cjref{conj:mix-quotients}:
\begin{conj}\label{conj:mix-representations}
If every nontrivial irreducible representation of $G$ is potentially mixable, then $G$ is mixable.
\end{conj}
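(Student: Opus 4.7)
The plan is to first invoke \Tref{thm:rep-mix} to reduce the conjecture to showing that every potentially mixable nontrivial irreducible representation $\rho\colon G \to \GL[d](\C)$ admits a mixing sequence $\relem{g_1}{p_1},\dots,\relem{g_k}{p_k}$ whose associated product $\prod_{i=1}^k ((1-p_i)I + p_i \rho(g_i))$ equals the zero matrix. By \Lref{lem:reps-sing} and \Cref{cor:reps--1-ev}, every factor that is not invertible must be of the form $\frac12(I + \rho(g_i))$, a singular matrix whose kernel is exactly the $(-1)$-eigenspace of $\rho(g_i)$. The concrete task is therefore combinatorial--linear-algebraic: choose $g_1,\dots,g_k \in G$ so that this specific product of singular matrices vanishes.

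I would attempt an induction on $d$. The base case $d = 1$ is immediate, since potential mixability supplies some $g$ with $\rho(g) = -1$ and then $\frac12 + \frac12 \rho(g) = 0$. For the inductive step, the first singular factor $M_1 = \frac12(I + \rho(g_1))$ has image $W$ that is a proper subspace of $\C^d$. The structural obstruction is that $W$ is generically not $G$-invariant, so one cannot simply restrict $\rho$ to a smaller representation; if $W$ were $G$-invariant, irreducibility would immediately force $W = 0$ and the argument would terminate.

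To work around this I would exploit \Pref{prop:equiv-conj}: potential mixability of every nontrivial irreducible is equivalent to $G$ being generated by its $2$-Sylow subgroups. Each $2$-Sylow $P \le G$ is mixable by \Cref{cor:2-groups}, so its mixing sequence yields an explicit product of factors $(1-p_i)I + p_i \rho(h_i)$ with $h_i \in P$ equal to $\frac{1}{\card{P}}\sum_{h \in P}\rho(h)$, the projection onto the $P$-fixed subspace $V^P \subseteq \C^d$. Since $\rho$ is a nontrivial irreducible and the conjugates of $P$ generate $G$, one can find conjugates $P_1,\dots,P_m$ of $P$ with $\bigcap_j V^{P_j} = 0$. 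One can then try to chain the explicit mixings of $P_1,\dots,P_m$ to bring the rank of the partial product down step by step, finishing with a few additional well-chosen singular factors.

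The main obstacle, and the reason this remains a conjecture, is that these projections do not commute: the product of the projections onto $V^{P_i}$ and $V^{P_j}$ equals the projection onto $V^{P_i} \cap V^{P_j}$ only under orthogonality hypotheses that generally fail. Controlling how successive products truly shrink the image down to zero, rather than stabilizing in some intermediate subspace, is the core difficulty. A complete proof will likely need either a sharper structural statement on how $G$-translates of $(-1)$-eigenspaces sweep out $\C^d$ within an irreducible, or an algebraic reformulation that replaces the non-commuting matrix product by an identity among central idempotents of $\C[G]$ along the lines of the Fourier argument in \Tref{thm:rep-mix}. The analysis of low-dimensional real representations in \Sref{sec:8} supplies the most direct partial evidence, but the general case appears to require a new ingredient.
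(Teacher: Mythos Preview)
This statement is labeled as a conjecture in the paper, and the paper offers no proof of it; it is explicitly open. Your proposal is therefore not being compared against a proof, and you yourself correctly flag that the argument is incomplete and that ``this remains a conjecture.''

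Your outline is sensible as a plan of attack. The reduction via \Tref{thm:rep-mix} is exactly right, the base case $d=1$ is immediate, and your identification of the central obstruction is accurate: the explicit mixing of a $2$-Sylow subgroup $P$ does produce, upon applying $\rho$, the orthogonal projection onto $V^P$, and chaining such projections for several conjugates $P_1,\dots,P_m$ with $\bigcap_j V^{P_j}=0$ need not yield the zero map because the projections do not commute. Nothing in the paper overcomes this.

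The partial progress in \Sref{sec:8} follows a somewhat different tactic from your Sylow-projection idea. There, for $d=2$ and real $d=3$, one fixes a single element $a$ with $-1$ as an eigenvalue and uses irreducibility (via Schur's lemma or \Lref{lem:kill-vec}) to find a conjugate or a single step $(1-p)I+p\,\rho(g)$ that moves the remaining fixed subspace into the $(-1)$-eigenspace, then kills it by reapplying $\frac12(I+\rho(a))$. That argument is ad hoc and dimension-specific; your Sylow approach is more structural but, as you note, runs into the non-commutativity wall. A genuine proof of the conjecture will need a new ingredient beyond either line.
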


The proof of \Pref{prop:equiv-conj} shows that if $-1$ is an eigenvalue of some element, then it is an eigenvalue of some element of order $2$ in the representation. Let us say that a group $H$ is {\bf{\twogen}} if it is generated by elements of order $2$.
(We use this terminology to avoid the collision with the term $2$-generated for groups generated by two elements).
Some remarks on such groups can be found in \cite{2gen}, where the authors note that there are plenty of \twogen\ groups. Indeed, every quotient of a Coxeter group is \twogen\ in our sense, as well as every finite simple group. As we saw above, a \twogen\ group is generated by the $2$-Sylow subgroups, and is thus $2'$-simple.

We put forth a conjecture that clearly follows from \Cjref{conj:mix-quotients}.
\begin{conj}\label{newconj}
Every finite \twogen\ group is mixable.
\end{conj}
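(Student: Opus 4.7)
The plan is to reduce, via \Tref{thm:rep-mix}, to showing that every nontrivial irreducible representation $\rho\colon G\to\GL[d](\C)$ of a \twogen\ group $G$ is mixable; the mixing sequences of the distinct irreducibles can then be concatenated into a mixing of the regular representation. Since $G$ is generated by involutions, some involution $t\in G$ satisfies $\rho(t)\neq I$, so $\rho$ is potentially mixable in the sense of \Cref{cor:reps--1-ev}. By \Pref{prop:equiv-conj}, the conjecture is thus the special case of \Cjref{conj:mix-representations} for \twogen\ groups: every potentially mixable irreducible representation of $G$ is mixable.

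My first attempt would be a dimensional-reduction argument using only involutions. For each involution $t$ with $\rho(t)\neq I$, the operator $P_t=\tfrac{1}{2}(I+\rho(t))$ is the orthogonal projection onto the $+1$-eigenspace $V_t^+$ of $\rho(t)$. Applying a product $P_{t_1}\cdots P_{t_k}$ and tracking its image from the right, the dimension strictly drops at a step precisely when the current image meets the $(-1)$-eigenspace of the next involution nontrivially. As global evidence that shrinking is possible, the averaging operator $A=\frac{1}{\card{T}}\sum_{t\in T}\tfrac{1}{2}(I-\rho(t))$ over the set $T$ of involutions is positive definite: any vector in $\bigcap_{t\in T} V_t^-$ would be fixed by every product of two involutions, hence by a subgroup of index at most $2$, forcing $\rho$ to contain a sign subrepresentation, excluded by irreducibility and nontriviality (the one-dimensional sign case itself is mixed in a single step by any involution $t$ with $\rho(t)=-1$).

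The key lemma that one can \emph{always} find an involution $t$ whose $(-1)$-eigenspace meets a given nonzero subspace $W\subseteq V_\rho$ is however false in general: already for $G=\Sym{3}$ acting on its standard $2$-dimensional representation, the three $(+1)$-eigenlines and three $(-1)$-eigenlines of the transpositions form six distinct lines, so once the running image has become one of the $V_t^+$'s, no further projection $P_{t'}$ reduces its dimension. Purely involutive sequences with $p_i=\tfrac{1}{2}$ therefore do not always suffice, and one must exploit the flexibility of the definition of mixability: invertible factors $(1-p)I+p\rho(g)$ with $0<p<1$ contract the image along the non-trivial eigendirections of $\rho(g)$, and can be used to steer the running image toward some $V_t^-$ before applying a single terminal projection $P_t$. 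Devising such contracting-then-annihilating sequences for arbitrary $\rho$ is the crux of the problem.

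I expect the main obstacle to be a \emph{uniform} construction valid for every nontrivial irreducible of every \twogen\ group at once. A realistic staged plan is: (i)~prove the conjecture for low-dimensional real representations, extending the evidence in \Sref{sec:8}; (ii)~verify it for the remaining explicit families as in \Sref{sec:coxeter}, \Sref{sec:2-transitive}, \Sref{sec:10}, and \Sref{sec:11}; and (iii)~combine these with the closure properties of \Tref{thevar} and the subnormal series of $2'$-simple groups with \twogen\ quotients to reduce the full problem to finite simple groups, all of which are \twogen\ by Feit--Thompson. A general proof would likely require a new structural insight producing, for each simple $G$ and each nontrivial $\rho$, an explicit contracting-then-annihilating sequence; lacking that, the conjecture may only be provable piecewise via the classification.
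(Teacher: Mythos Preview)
The statement is \Cjref{newconj}, which the paper presents as an open conjecture; there is no proof in the paper to compare against. The paper only shows that \Cjref{newconj} is equivalent to \Cjref{conj:mix-quotients} (the Proposition following \Tref{2'=2+2+2}) and assembles partial evidence in \Sref{sec:8} and the later sections on specific families.

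Your proposal is, appropriately, not a proof but a research outline, and you are explicit about this: you carry out the natural reduction via \Tref{thm:rep-mix} to mixing each nontrivial irreducible, correctly observe that purely involutive $p=\tfrac{1}{2}$ sequences already fail for the standard representation of $\Sym{3}$, and conclude that a general argument would require new structural input or a piecewise attack through the classification. This is precisely the paper's own position. One small slip worth flagging: positive definiteness of your averaging operator $A=\frac{1}{\card{T}}\sum_{t\in T}\tfrac{1}{2}(I-\rho(t))$ is equivalent to $\bigcap_{t\in T} V_t^{+}=\{0\}$ (no common \emph{fixed} vector, which is immediate from irreducibility of $\rho$), not to the statement about $\bigcap_{t\in T} V_t^{-}$ that you go on to argue. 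The latter is a separate true fact for $\dim\rho\ge 2$, but it is not what governs $A$, and in any case neither statement by itself yields the step-by-step shrinking you need.
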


Note that the class of \twogen\ groups is closed under taking quotients, but not under passing to subgroups (\eg $A_3 \normali S_3$).
The maximal \twogen\ subgroup of $G$, namely, the subgroup generated by all the elements of order $2$ in $G$, is normal. This leads to a useful normal series. (We say that a chain of subgroups of $G$ is {\bf{subnormal}} if each entry is normal in the next, and {\bf{normal}} if the entries are normal in~$G$).
\begin{thm}\label{Iup}
Every finite group $G$ has a normal series
$$1 = I_0 \normali I_1 \normali I_2 \normali \dots \normali I_t \normal G,$$
where the top quotient $G/I_t$ has odd order, and all the other quotients $I_{j+1}/I_j$ are \twogen.
\end{thm}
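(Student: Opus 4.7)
The plan is to construct the series recursively from the bottom up. Set $I_0 = 1$, and having defined $I_j \normali G$, let $I_{j+1}$ be the preimage in $G$ of the subgroup of $G/I_j$ generated by all its elements of order~$2$. In any finite group~$H$, the set of elements of order~$2$ is closed under conjugation (and under every automorphism), so the subgroup it generates is characteristic in~$H$ — in particular normal. Applied to $H = G/I_j$, this makes $I_{j+1}/I_j$ a characteristic subgroup of $G/I_j$, and by the standard correspondence $I_{j+1}$ is therefore normal (indeed characteristic) in~$G$. By construction, the quotient $I_{j+1}/I_j$ is generated by elements of order~$2$, hence \twogen.

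Since $G$ is finite, the ascending chain $I_0 \le I_1 \le I_2 \le \cdots$ of normal subgroups of~$G$ must stabilize at some $I_t \normal G$. At that point the construction yields $I_{t+1} = I_t$, which means that $G/I_t$ has no elements of order~$2$. By Cauchy's theorem, a finite group with no element of order~$2$ has odd order; so $G/I_t$ has odd order, as required.

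I do not see any serious obstacle here; the only thing to verify carefully is that each $I_{j+1}$ really is normal in~$G$ (not just in $I_{j+2}$ or in $G/I_j$), and this follows immediately from the fact that the subgroup generated by involutions is characteristic in any group, combined with the correspondence theorem applied to $I_j \normali G$. The statement that $I_{t+1} = I_t$ forces $G/I_t$ to have odd order is then just Cauchy. Thus the construction produces exactly the normal series asserted in the theorem.
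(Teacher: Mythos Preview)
Your proof is correct and follows essentially the same construction as the paper: both define $I_{j+1}/I_j$ to be the subgroup of $G/I_j$ generated by its involutions, which is characteristic and hence yields $I_{j+1}\normal G$. The only cosmetic difference is in the termination argument: the paper inducts on $\nu_2(\card{G})$ (observing that $I_1$ has even order, so $\nu_2(\card{G/I_1})<\nu_2(\card{G})$), whereas you simply invoke stabilization of an ascending chain in a finite group and Cauchy's theorem; both are immediate.
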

This is another indication that the class of \twogen\ groups is quite large:  by the Feit-Thompson Theorem, all the nonabelian simple composition factors of an arbitrary group $G$ hide in the \twogen\ quotients of the normal series.
\begin{proof}
We prove by induction on $\nu_2(\card{G})$, the maximal $s$ such that $2^s \mid \card{G}$. If $\card{G}$ is odd we are done by $1 \normali G$. Otherwise, let $I(G)$ denote the subgroup generated by the elements of order $2$, which is normal and has even order. Thus $\nu_2(G/I(G)) < \nu_2(G)$, and we are done by lifting a normal series of $G/I(G)$ to a normal series of~$G$ by attaching $I(G)$ as the first nontrivial subgroup in the new series.
\end{proof}

Note that in this series, we may take $I_t = U(G)$. In particular every $2'$-simple group has a normal series with \twogen\ quotients.
The opposite is true as well.
\begin{thm}\label{2'=2+2+2}
A finite group $G$ is $2'$-simple if and only if it has a subnormal series with \twogen\ quotients.
\end{thm}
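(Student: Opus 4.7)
The plan is to prove each direction separately. The forward direction is essentially an immediate corollary of \Tref{Iup}, and the backward direction rests on the elementary observation that a \twogen\ group of odd order must be trivial.

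For the forward implication, suppose $G$ is $2'$-simple, so by definition $U(G) = G$. Apply \Tref{Iup} to produce a normal series
\[
    1 = I_0 \normali I_1 \normali \cdots \normali I_t \normal G,
\]
with each $I_{j+1}/I_j$ \twogen\ and $G/I_t$ of odd order. Invoking the remark after \Tref{Iup} that one may take $I_t = U(G)$, we obtain $I_t = G$, so the top (potentially non-\twogen) quotient disappears and the remaining normal series, which is in particular subnormal, has only \twogen\ quotients.

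For the backward implication, suppose $1 = H_0 \normali H_1 \normali \cdots \normali H_k = G$ is a subnormal series with every $H_{j+1}/H_j$ \twogen. I want to show $G$ has no nontrivial quotient of odd order, which by \Pref{maxodd} is equivalent to $U(G) = G$, i.e.\ to $G$ being $2'$-simple. So let $N \normali G$ with $\card{G/N}$ odd. Since $N$ is normal in $G$, the products $H_j N$ form a chain
\[
    N = H_0 N \normali H_1 N \normali \cdots \normali H_k N = G,
\]
and by the second isomorphism theorem each quotient $H_{j+1}N / H_j N \cong H_{j+1} / (H_{j+1} \cap H_j N)$ is a quotient of $H_{j+1}/H_j$. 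Since quotients of \twogen\ groups are \twogen\ (the images of involutions are involutions or trivial, and they generate the quotient), each $H_{j+1}N/H_j N$ is \twogen. But it is simultaneously a subquotient of $G/N$, hence of odd order; a nontrivial group of odd order has no involutions at all, so an odd-order \twogen\ group is trivial. Thus $H_{j+1}N = H_j N$ for every $j$, and telescoping gives $G = H_k N = H_0 N = N$, so $G/N$ is trivial.

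The main obstacle, such as it is, lies entirely in the backward direction: one must verify that the subnormal chain $(H_j)$ pushes forward to a subnormal chain $(H_j N)$ in $G$ whose quotients remain \twogen. This is routine isomorphism-theorem bookkeeping, and once it is in place the argument closes immediately by the triviality of odd-order \twogen\ groups. The forward direction contributes essentially nothing beyond citing \Tref{Iup}.
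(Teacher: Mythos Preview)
Your proof is correct and follows essentially the same approach as the paper. The forward direction is identical (invoke \Tref{Iup} with $I_t = U(G) = G$), and for the backward direction both arguments hinge on the same observation that an odd-order \twogen\ group is trivial; the paper phrases this as an induction on the length of the series (show $I_1 \leq N$, pass to $G/I_1$, repeat), whereas you unroll the induction by pushing the whole chain forward to $(H_jN)_j$ and collapsing it in one stroke---a purely cosmetic difference.
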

\begin{proof}
Let $1 = I_0 \normali I_1 \normali \cdots \normali I_t = G$ be a subnormal series with \twogen\ quotients. Let $N \normal G$ be a normal subgroup of odd order. We prove that $N = G$. If $t= 0$ there is nothing to show. Assume $t \geq 1$.
Note that $I_1/(N \cap I_1) \cong NI_1 / N$ is \twogen\ as a quotient of~$I_1$, and of odd order as a subgroup of $G/N$, so it must be trivial. It follows that $I_1 \leq N$. Divide by $I_1$ and conclude by induction on $t$.
\end{proof}

We now see that it suffices to verify \Cjref{conj:mix-quotients} for \twogen\ groups.

\begin{prop}
Conjectures \ref{conj:mix-quotients} and \ref{newconj} are equivalent.
\end{prop}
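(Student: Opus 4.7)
My plan is to prove this as a short chain of implications, using the structural theorems already established.

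For the easy direction ($\ref{conj:mix-quotients} \Rightarrow \ref{newconj}$), I would simply note that any finite \twogen\ group $G$ is generated by its elements of order~$2$, each of which lies in some $2$-Sylow subgroup. Hence $G$ is generated by its $2$-Sylow subgroups, so $U(G)=G$, meaning $G$ is $2'$-simple. Assuming \Cjref{conj:mix-quotients}, this yields mixability of $G$ and hence \Cjref{newconj}.

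For the converse ($\ref{newconj} \Rightarrow \ref{conj:mix-quotients}$), I would split according to the two directions of \Cjref{conj:mix-quotients}. The ``only if'' direction is unconditional: if $G$ is not $2'$-simple, then by \Pref{maxodd} the quotient $G/U(G)$ is a nontrivial group of odd order, so $G$ is not mixable by \Tref{thm:odd-order}. For the ``if'' direction, suppose $G$ is $2'$-simple. By \Tref{2'=2+2+2}, there is a subnormal series
\[
    1 = I_0 \normali I_1 \normali \cdots \normali I_t = G
\]
in which every quotient $I_{j+1}/I_j$ is \twogen. Assuming \Cjref{newconj}, each $I_{j+1}/I_j$ is mixable. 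I would then proceed by induction on $j$ to prove that each $I_j$ is mixable: the base case $I_0 = \set{e}$ is trivial, and the inductive step applies \Cref{cor:N-and-G/N} to the normal subgroup $I_j \normali I_{j+1}$ together with the mixable quotient $I_{j+1}/I_j$. Taking $j=t$ gives mixability of $G$.

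There is no serious obstacle here: the equivalence is essentially a bookkeeping consequence of the structural decomposition in \Tref{2'=2+2+2} combined with the closure of mixability under normal extensions (\Cref{cor:N-and-G/N}) and the obstruction result \Tref{thm:odd-order}. The only point requiring a little care is that the series in \Tref{2'=2+2+2} is only subnormal, not normal in $G$; but this is harmless, since \Cref{cor:N-and-G/N} only requires $I_j$ to be normal in $I_{j+1}$, which is exactly what a subnormal series provides.
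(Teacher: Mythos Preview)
Your proposal is correct and follows essentially the same approach as the paper: the easy direction uses that \twogen\ groups are $2'$-simple, and the converse combines the subnormal series of \Tref{2'=2+2+2} with closure of mixability under extensions. The paper's proof is terser (it cites \Tref{thevar} rather than inducting explicitly via \Cref{cor:N-and-G/N}), and it leaves the unconditional ``only if'' direction of \Cjref{conj:mix-quotients} implicit, but the content is the same; your remark about subnormality sufficing for \Cref{cor:N-and-G/N} is a nice clarification.
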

\begin{proof}
That \Cjref{conj:mix-quotients} implies \Cjref{newconj} is obvious, because every \twogen\ group is $2'$-simple. The other direction is a consequence of the normal series obtained above by \Tref{thevar}.
\end{proof}

\section{Low-dimensional representations}\label{sec:8}

This section is devoted to proving \Cjref{conj:mix-representations} for low-dimensional representations.

\begin{rem}\label{rem:mix-rep-d}
  Let $\rho\colon G\to\GL[d](\C)$ be a representation of $G$. If there exists some $a\in G$ such that $\rho(a)=-I$, then $\frac{1}{2}I+\frac{1}{2}\rho(a)=0$, showing that $\rho$ is mixable.

  In particular, if $\rho\colon G\to\C^{\times}$ is a nontrivial irreducible $1$-dimensional representation of $G$, then $\rho$ is mixable if and only if there exists $a\in G$ such that $\rho(a)=-1$.
\end{rem}

We turn to proving the above mentioned conjecture for complex $2$-dimensional and real $3$-dimensional irreducible representations. Recall that any representation of a finite group is equivalent to a unitary one; hence we may add this assumption whenever needed.

\begin{prop}\label{prop:mix-rep-d-1}
    Let $\rho\colon G\to\GL[d](\C)$ be an irreducible representation of $G$ for some $d\ge 2$. If there exists $a\in G$ such that the multiplicity of $-1$ as an eigenvalue of $\rho(a)$ is $d-1$, then $\rho$ is mixable in at most $3$ steps.
\end{prop}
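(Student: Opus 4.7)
The plan is to exhibit an explicit mixing sequence of length at most $3$, with the construction depending on the order of $\lambda$, the unique eigenvalue of $\rho(a)$ different from $-1$ (which occurs with multiplicity $1$). Assume WLOG that $\rho$ is unitary, and decompose $\C^d = V_{-1} \oplus \C v$ orthogonally, where $V_{-1}$ is the $(d-1)$-dimensional $(-1)$-eigenspace of $\rho(a)$ and $v$ is a unit $\lambda$-eigenvector. Setting $P_v := vv^*$ for the orthogonal projection onto $\C v$, we have $\rho(a) = -(I - P_v) + \lambda P_v$, so $M_a := \tfrac{1}{2}(I + \rho(a)) = \tfrac{1 + \lambda}{2} P_v$, which is rank $1$ since $\lambda \ne -1$.

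If $\lambda$ has even order $2k$, then $\lambda^k = -1$ and $\rho(a^k) = (-1)^k(I - P_v) - P_v$. When $k$ is odd, $\rho(a^k) = -I$, giving the $1$-step mixing $\relem{a^k}{\tfrac{1}{2}}$. When $k$ is even, $\rho(a^k) = I - 2P_v$, so $\tfrac{1}{2}(I + \rho(a^k)) = I - P_v$ annihilates $M_a$ from the left since $(I - P_v)P_v = 0$, giving the $2$-step mixing $\relem{a^k}{\tfrac{1}{2}}, \relem{a}{\tfrac{1}{2}}$. Otherwise $\lambda$ has odd order $m$ (permitting $m = 1$, i.e.\ $\lambda = 1$); then $\lambda^m = 1$ and $(-1)^m = -1$, so $\rho(a^m) = 2P_v - I$ is the reflection fixing $v$, and $M_{a^m} = P_v$. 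In this subcase I would use the sandwich sequence $\relem{a^m}{\tfrac{1}{2}}, \relem{b}{p}, \relem{a^m}{\tfrac{1}{2}}$, whose matrix product equals $P_v N_b P_v = \langle N_b v, v\rangle P_v$ (with $N_b := (1 - p) I + p \rho(b)$); this vanishes precisely when $(1 - p) + p\phi(b) = 0$ for $\phi(b) := \langle \rho(b) v, v\rangle$.

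To complete the sandwich, I would take $b := c a^m c^{-1}$ for an appropriately chosen $c \in G$: then $\rho(b) = 2 P_{\rho(c) v} - I$, and a direct computation gives $\phi(b) = 2|\phi(c)|^2 - 1 \in [-1, 1]$, which lies in $[-1, 0]$ exactly when $|\phi(c)|^2 \le \tfrac{1}{2}$. The main step is verifying the existence of such a $c$; this follows from Schur orthogonality applied to the irreducible $\rho$: $\sum_{g \in G} |\phi(g)|^2 = |G|/d$, so the average of $|\phi|^2$ over $G$ is $1/d \le 1/2$ (as $d \ge 2$), and any minimizer works. Setting $p := 1/[2(1 - |\phi(c)|^2)] \in [\tfrac{1}{2}, 1]$ then makes the sandwich product vanish, showing that $\rho$ is mixable in at most $3$ steps in all cases.
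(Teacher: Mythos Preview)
Your argument is correct and uses essentially the same ``sandwich'' idea as the paper: project onto $\C v$ with $\tfrac{1}{2}(I+\rho(a))$, push $v$ into $V_{-1}$ using a conjugate of $a$, then project again. In the main case both proofs locate a suitable conjugator by averaging: you invoke Schur orthogonality for matrix coefficients to get $\tfrac{1}{|G|}\sum_g|\phi(g)|^2 = 1/d$, while the paper applies Schur's lemma to $\sum_g \rho(gag^{-1})$ to reach the equivalent conclusion that the average of $\langle \rho(gag^{-1})v,v\rangle$ is $(2-d)/d \le 0$. The one substantive difference is your explicit case split on the order of $\lambda$. The paper instead asserts up front that one may assume all eigenvalues of $\rho(a)$ lie in $\{\pm 1\}$; however, this reduction by powers is not immediate when $d\ge 3$ and the order of $\lambda$ is divisible by $4$ (then $a^{n/2}$ has $-1$ with multiplicity $1$, not $d-1$), so your direct two-step mixing in that subcase is actually tidier. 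One cosmetic point: the paper secures a strict inequality $\phi(b)<0$ (hence $p<1$) by noting that the $g=e$ term contributes $1>0$ to a nonpositive sum; your minimizer argument only yields $\phi(b)\le 0$, allowing $p=1$, which is harmless but can be sharpened the same way since $|\phi(e)|^2=1>1/d$.
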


\begin{proof}
    As mentioned above, we may assume that $\rho$ is unitary, and that all eigenvalues of $\rho(a)$ are $\pm 1$. Hence there is an orthonormal basis $v_1,\dots,v_d$ of $\R^d$ such that $\rho(a)v_1=v_1$ and $\rho(a)v_i=-v_i$ for all $2\le i\le d$. We assume that $v_1,\dots,v_d$ are the standard basis of $\C^d$, so that $\rho(a)=\operatorname{diag}(1,-1,-1,\dots,-1)$.

    Recall that $\sum_{g\in G}gag^{-1}\in\Zent(\C[G])$. Therefore, by Schur's lemma, $\sum_{g\in G}\rho(gag^{-1})=\lambda I$ for some $\lambda\in\C$. Note that $\tr\rho(gag^{-1})=\tr\rho(a)=2-d$ for all $g\in G$, hence
    \[
        (2-d)\left|G\right|=\tr\left(\sum_{g\in G}\rho(gag^{-1})\right)=\tr(\lambda I)=\lambda d.
    \]
    In particular, $\lambda$ is a non-positive real number.

    By assumption, $\rho(g)$ is unitary for all $g\in G$. Therefore, the element in position $(1,1)$ in $\rho(gag^{-1})$ is
    \begin{align*}
        \rho(gag^{-1})_{1,1} &= (\rho(g)\rho(a)\rho(g)^{-1})_{1,1} = (\rho(g)\rho(a)\rho(g)^*)_{1,1} \\
        & = \sum_{i,j=1}^d \rho(g)_{1,i}\rho(a)_{i,j}(\rho(g)^*)_{j,1} = \sum_{i=1}^d \rho(g)_{1,i}\rho(a)_{i,i}\overline{\rho(g)_{1,i}} \\
        & = \left|\rho(g)_{1,1}\right|^2-\sum_{i=2}^d\left|\rho(g)_{1,i}\right|^2.
    \end{align*}
    Therefore, $\rho(gag^{-1})_{1,1}\in\R$ for all $g\in G$. Since $\sum_{g\in G}\rho(gag^{-1})_{1,1}=\lambda\le 0$ and $\rho(a)_{1,1}=1$, it follows that $\alpha\coloneqq\rho(hah^{-1})_{1,1}<0$ for some $h\in G$.

    We construct an explicit mixing sequence of $\rho$ as follows. Apply first $A=\frac{1}{2}I+\frac{1}{2}\rho(a)$, which annihilates $v_2,\dots,v_d$ and fixes $v_1$. Hence, $A$ maps $\R^d$ to $\R v_1$. Next, write $p=\frac{1}{1-\alpha}$, and apply the matrix $B=(1-p)I+p\,\rho(hah^{-1})$. As $B_{1,1}=1-p+p\,\rho(hah^{-1})_{1,1}=0$, it follows that $Bv_1\in \C v_2+\cdots+\C v_d$. Finally, apply $A$ again to annihilate the remaining subspace. This shows that~$\rho$ is mixable with a mixing sequence of length $3$.
\end{proof}

As a direct corollary, we prove that $2$-dimensional real representations satisfy \Cjref{conj:mix-representations}:

\begin{cor}\label{cor:mix-2-dim-real}
A potentially mixable irreducible $2$-dimensional representation is mixable, in at most $3$ steps.
\end{cor}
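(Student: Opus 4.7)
The plan is to reduce directly to the preceding results. Potential mixability provides some $a\in G$ such that $-1$ is an eigenvalue of $\rho(a)$. Since $a$ has finite order in $G$, the matrix $\rho(a)$ is diagonalizable with roots of unity as eigenvalues; in dimension $d=2$, the eigenvalue list must therefore be $\{-1,\lambda\}$ for some root of unity $\lambda$.

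The argument then splits into two cases according to $\lambda$. If $\lambda=-1$, then $\rho(a)=-I$, and by \Rref{rem:mix-rep-d} the one-step sequence $\relem{a}{\tfrac{1}{2}}$ already satisfies $\tfrac{1}{2}I+\tfrac{1}{2}\rho(a)=0$, giving a mixing of length $1$. If instead $\lambda\neq -1$, then the multiplicity of $-1$ as an eigenvalue of $\rho(a)$ equals exactly $1 = d-1$, which is precisely the hypothesis of \Pref{prop:mix-rep-d-1} for $d=2$. Applying that proposition produces a mixing sequence of length $3$ for $\rho$.

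Since the two cases exhaust all possibilities and each yields a mixing of length at most $3$, the corollary follows. There is no real obstacle: the statement is essentially a dimensional specialization of \Pref{prop:mix-rep-d-1}, and the only observation needed is that in dimension $2$ the weaker hypothesis (just that $-1$ is some eigenvalue of some $\rho(a)$) automatically upgrades to the stronger hypothesis of \Pref{prop:mix-rep-d-1} (namely, $-1$ appears with multiplicity $d-1$), up to the degenerate subcase $\rho(a)=-I$ that is handled by \Rref{rem:mix-rep-d}.
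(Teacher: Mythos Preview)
Your proof is correct and follows essentially the same approach as the paper: the paper's proof simply states that the claim follows from a combination of \Rref{rem:mix-rep-d} and \Pref{prop:mix-rep-d-1}, and your argument makes explicit the trivial case split (eigenvalue $-1$ with multiplicity $2$ versus multiplicity $1$) needed to invoke each result.
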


\begin{proof}
  The ``only if'' direction follows from \Cref{cor:reps--1-ev}. The ``if'' direction follows from a combination of \Rref{rem:mix-rep-d} and \Pref{prop:mix-rep-d-1}.
\end{proof}

This corollary allows us to establish \Cjref{conj:mix-quotients} for certain semidirect products:

\begin{prop}\label{prop:abelian-Z2}
    Let $G=A\rtimes(\Z/2\Z)$ be a semidirect product where $A$ is abelian. Then~$G$ is mixable if and only if it is $2'$-simple.
\end{prop}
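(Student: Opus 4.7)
The plan is to invoke \Tref{thm:rep-mix} and verify that every nontrivial irreducible complex representation of $G$ is mixable under the hypothesis that $G$ is $2'$-simple; the converse direction is immediate, since any mixable group is $2'$-simple by \Tref{thm:odd-order}. Because $A$ is a normal abelian subgroup of index~$2$, standard Clifford theory describes $\widehat{G}$ very cleanly: each $t$-fixed character $\chi\in\widehat{A}$ extends to two $1$-dimensional representations of~$G$, and each $t$-orbit $\set{\chi,\chi^t}$ of size~$2$ induces a $2$-dimensional irreducible representation $\mathrm{Ind}_A^G\chi$; together these exhaust $\widehat{G}$. I would split the argument into two cases according to this dimension dichotomy.

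For the $1$-dimensional case, note that any such $\rho$ factors through the abelianization $G^{\mathrm{ab}}=G/[G,G]$. Since $G$ is $2'$-simple, $G^{\mathrm{ab}}$ admits no nontrivial quotient of odd order, and being abelian this forces $G^{\mathrm{ab}}$ to be a finite $2$-group. The image of a nontrivial $\rho$ in $\C^{\times}$ is then a nontrivial cyclic $2$-group, which necessarily contains $-1$. Choosing $a\in G$ with $\rho(a)=-1$, \Rref{rem:mix-rep-d} shows that $\rho$ is mixable in a single step.

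For the $2$-dimensional case, working in the induced basis $\set{v,tv}$ one computes that $\rho(t)$ swaps the two basis vectors, so its eigenvalues are $+1$ and $-1$, each with multiplicity one. Thus $-1$ appears as an eigenvalue of $\rho(t)$ with multiplicity $d-1=1$, and \Pref{prop:mix-rep-d-1} immediately produces a mixing sequence of length~$3$.

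I do not anticipate a real obstacle: the proof is essentially a routine application of Clifford theory combined with \Pref{prop:mix-rep-d-1}. The only mildly delicate point is that the specific structure of a semidirect product by $\Z/2\Z$ forces the involution $t$ to act on every induced representation by a swap matrix, so the hypothesis of \Pref{prop:mix-rep-d-1} is satisfied on the nose with $a=t$; without this structural feature one would have to work harder to locate an element with the required eigenvalue multiplicity.
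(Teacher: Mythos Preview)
Your proof is correct and follows essentially the same route as the paper: both arguments observe that all irreducibles of $G$ have dimension at most~$2$ and then invoke \Rref{rem:mix-rep-d} and \Pref{prop:mix-rep-d-1} (the paper bundles these as \Cref{cor:mix-2-dim-real}). The only cosmetic difference is that you exhibit the witness element directly (the abelianization argument in dimension~$1$, and the swap matrix $\rho(t)$ in dimension~$2$), whereas the paper appeals to \Pref{prop:equiv-conj} to obtain potential mixability abstractly from the $2'$-simplicity hypothesis.
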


\begin{proof}
    Since all irreducible representations of $A$ are $1$-dimensional, it follows that all irreducible representations of $G$ are $1$- or $2$-dimensional (see e.g.\ \cite[Section 8.2]{Serre}). Therefore, if $G$ is $2'$-simple, then all of its nontrivial irreducible representations are potentially mixable, and thus mixable by \Cref{cor:mix-2-dim-real}.
\end{proof}

We next consider $3$-dimensional real representations. We first show that we can send each vector to its orthogonal complement, generalizing the last step in the proof of \Pref{prop:mix-rep-d-1} for real representations:

\begin{lem}\label{lem:kill-vec}
    For any nontrivial irreducible representation $\rho\colon G\to\GL[d](\R)$, and any $0\ne v\in\R^n$, there exist $g\in G$ and $0\le p\le 1$ such that $(1-p)I+p\,\rho(g)$ sends $v$ to the orthogonal space $v^{\perp}$.
\end{lem}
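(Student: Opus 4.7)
The plan is to exhibit a single-step element $g\in G$ and a probability $p$, and solve explicitly for $p$ in terms of the inner product $\innprod{v}{\rho(g)v}$. Assume (without loss of generality, via an invariant inner product on $\R^d$) that $\rho$ takes values in the orthogonal group. Sending $v$ into $v^\perp$ by the operator $(1-p)I+p\,\rho(g)$ is equivalent, after pairing with $v$, to the scalar equation
\[
(1-p)\|v\|^2 + p\innprod{v}{\rho(g)v} = 0.
\]
Thus, for any $g$ with $\innprod{v}{\rho(g)v}\le 0$, setting
\[
p = \frac{\|v\|^2}{\|v\|^2 - \innprod{v}{\rho(g)v}} \in (0,1]
\]
does the job.

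The remaining task is therefore to produce some $g\in G$ with $\innprod{v}{\rho(g)v}\le 0$. Here is where I would use the nontriviality and irreducibility of $\rho$: the operator $\frac{1}{|G|}\sum_{g\in G}\rho(g)$ is the projection onto the trivial subrepresentation, which is zero, so
\[
\sum_{g\in G}\innprod{v}{\rho(g)v}=0.
\]
The identity contributes $\|v\|^2>0$, so at least one other $g$ must contribute a non-positive value. Plugging this $g$ into the formula above yields the required pair $(g,p)$, which completes the proof.

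There is no real obstacle here; the only thing to watch is the degenerate case $\innprod{v}{\rho(g)v}=0$, which is handled by simply taking $p=1$ (making the operator just $\rho(g)$, which by construction sends $v$ into $v^\perp$). The argument does not actually require $d=3$ or any dimensional restriction, so the same lemma will be available in later arguments in any dimension, which is convenient.
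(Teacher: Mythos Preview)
Your proof is correct and follows essentially the same approach as the paper: use $\sum_{g\in G}\rho(g)=0$ to find $g$ with $\innprod{v}{\rho(g)v}\le 0$, then solve the linear equation for $p$. Your formula for $p$ is in fact cleaner than the paper's (which has a minor typo, writing $\|v\|$ where $\|v\|^2$ is meant), and your explicit handling of the boundary case $\innprod{v}{\rho(g)v}=0$ is a nice touch the paper omits.
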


\begin{proof}
    Since $\rho$ is nontrivial and irreducible, we have $\sum_{g\in G}\rho(g)=0$. In particular, $\sum_{g\in G}\rho(g)v=0$, so there is some $g\in G$ such that $\alpha\coloneqq\left\langle \rho(g)v,v\right\rangle\le 0$. Take $p=\frac{\left\|v\right\|}{\left\|v\right\|-\alpha}$, and note that $0\le p\le 1$ since $\alpha\le 0$. Then
    \begin{align*}
        \left\langle((1-p)I+p\,\rho(g))v,v\right\rangle & = \left\langle(1-p)v+p\,\rho(g)v,v\right\rangle \\
        & = (1-p)\left\|v\right\| + p\alpha=0
    \end{align*}
    as required.
\end{proof}

\begin{prop}\label{prop:mix-3-dim-real}
A potentially mixable irreducible $3$-dimensional real representation is mixable, in at most $7$ steps.
\end{prop}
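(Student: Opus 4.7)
The plan is a case analysis on the multiplicity $m$ of $-1$ as an eigenvalue of $\rho(a)$ for an element $a\in G$ witnessing potential mixability. If $m=3$ (so $\rho(a)=-I$), then \Rref{rem:mix-rep-d} gives a one-step mixing. If $m=2$, then \Pref{prop:mix-rep-d-1} produces a mixing of length at most $3$. The remaining case $m=1$ is where we need the full budget of $7$ steps.

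Assume $m=1$ and let $u$ be the unit $-1$-eigenvector of $\rho(a)$ (taking $\rho$ to be orthogonal, by unitarizing). Set $A_{1}=\frac{1}{2}I+\frac{1}{2}\rho(a)$, which has kernel $\R u$ and image contained in $V:=u^{\perp}$, a $2$-dimensional subspace. After this one step the combined image lives in $V$, and I need to annihilate this $2$-dimensional image in at most six more steps.

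My strategy is to carry out two applications of the three-step scheme of \Pref{prop:mix-rep-d-1} in succession. In the first phase (steps $2$--$4$) I combine \Lref{lem:kill-vec} with the averaging identity $\sum_{g\in G}\rho(g)=0$ to reduce the image from the $2$-dimensional subspace $V$ to a single line $\R w$: an invertible ``rotation'' $(1-p)I+p\rho(g)$ (with $p\neq \frac{1}{2}$) repositions $V$ so that a subsequent rank-$2$ operator $\frac{1}{2}I+\frac{1}{2}\rho(b)$, for a well-chosen conjugate $b$ of $a$, has its $1$-dimensional kernel lying inside the repositioned plane; a third step then collapses the image to $\R w$. In the second phase (steps $5$--$7$) I apply an analog of \Pref{prop:mix-rep-d-1} to the $1$-dimensional image: \Lref{lem:kill-vec} rotates $w$ into the $-1$-eigenspace of a suitable conjugate of $a$, and a final $\frac{1}{2}I+\frac{1}{2}\rho(c)$ annihilates it.

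The main obstacle is geometric: verifying the existence, at each stage, of group elements whose eigenvectors sit in the required positions. This is handled by irreducibility, which forces $\{\rho(g)u:g\in G\}$ to span $\R^{3}$, together with the averaging identity $\sum_{g}\rho(g)=0$, which guarantees that certain inner products are non-positive; \Lref{lem:kill-vec} then converts these non-positivities into exact-zero alignments by choosing the appropriate convex combination, exactly as in the proof of \Pref{prop:mix-rep-d-1}.
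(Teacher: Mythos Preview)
Your case split and the handling of $m=2,3$ are fine, but in the main case $m=1$ both phases have a genuine gap.

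In Phase I you need, after the invertible ``rotation'', a conjugate $b=hah^{-1}$ whose $-1$-eigenvector $\rho(h)u$ lies inside the (repositioned) plane. Equivalently you need $g,p,h$ with $\rho(h)u\in((1-p)I+p\rho(g))V$. Neither irreducibility nor the averaging identity delivers this: irreducibility says the orbit $\{\rho(h)u\}$ spans $\R^3$ (so it is not \emph{contained} in any plane), and averaging says the finitely many numbers $\langle\rho(h)u,u\rangle$ sum to $0$ and hence take both signs; but nothing forces one of them to vanish, nor does the one-parameter family $p\mapsto((1-p)I+p\rho(g))V$ obviously meet any of the finitely many lines $\R\rho(h)u$. \Lref{lem:kill-vec} moves a single vector into its orthogonal complement; it does not align a given $1$-dimensional kernel with a prescribed $2$-plane.

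Phase II has the same problem in sharper form: \Lref{lem:kill-vec} sends $w$ into the $2$-dimensional space $w^\perp$, not into the $1$-dimensional $(-1)$-eigenspace of any particular conjugate of $a$, so the ``final $\frac{1}{2}I+\frac{1}{2}\rho(c)$'' need not annihilate the result.

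The paper avoids both issues by first constructing an invariant splitting. With $U=u^\perp$ one picks $g$ with $\rho(g)U\neq U$, sets $U_0=U\cap\rho(g)U$, and observes that the subgroup $H=\langle a,\,gag^{-1}\rangle$ fixes the line $U_0$ and acts irreducibly on the $2$-dimensional complement $U_0^\perp$. Then \Cref{cor:mix-2-dim-real} furnishes a $3$-step product in $H$ that annihilates $U_0^\perp$ while fixing $U_0$; one applies it, uses \Lref{lem:kill-vec} once to push $U_0$ into $U_0^\perp$, and applies the same $3$-step product again. The $H$-invariance of $U_0^\perp$ is precisely what legitimizes reusing the $3$-step block and is the ingredient your outline is missing.
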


\begin{proof}
As in the proof of \Pref{prop:mix-rep-d-1}, we may assume that the representation $\rho$ is unitary, and that for a suitable element $a \in G$ all eigenvalues of $\rho(a)$ are $\pm 1$. By replacing $a$ with an appropriate power, we may also assume that the order of $a$ is $2^t$ for some $t\ge 1$ (while $\rho(a)\ne I$).

  Write $U=\set{v\in\R^3\suchthat \rho(a)v=v}$. If $\dim U<2$, then the multiplicity of $-1$ as an eigenvalue of $\rho(a)$ is $2$ or $3$, and then $\rho$ is mixable either by \Rref{rem:mix-rep-d} (if $\dim U=0$) or by \Pref{prop:mix-rep-d-1} (if $\dim U=1$). We therefore assume that $\dim U=2$.

  Take $g\in G$ such that $gU\ne U$, which exists since $\rho$ is irreducible. Note that $U_0=U\cap gU$ is a $1$-dimensional subspace of $\R^3$. The subgroup $H=\sg{a,gag^{-1}}$ fixes $U_0$, hence acts on $U_0^{\perp}$ since $\rho$ is unitary. This action is irreducible, since the subspace fixed by $H$ is precisely $U_0$. Hence, by \Cref{cor:mix-2-dim-real}, there exists a random subproduct $\boldsymbol{h}$ in $H$ (of length at most $3$) that annihilates $U_0^{\perp}$. Note further that $\boldsymbol{h}$ fixes $U_0$, so $\boldsymbol{h}$ maps~$\R^3$ to $U_0$. Next, by \Lref{lem:kill-vec}, there exist $h\in G$ and $0\le p\le 1$ such that $(1-p)I+p\,\rho(h)$ maps $U_0$ to $U_0^{\perp}$ (since $\dim U=1$). Applying $\boldsymbol{h}$ again annihilates $U_0^{\perp}$, completing the mixing of $\rho$.

\end{proof}

\section{Finite Coxeter groups}\label{sec:coxeter}

Finite Coxeter groups, which are of course \twogen, pose a natural family of candidates for being mixable.
Recall that a \textbf{Coxeter group} is a group $G$ with a presentation of the form
\[
    G = \sg{r_1,\dots,r_n\suchthat (r_ir_j)^{m_{ij}}=1}
\]
where $m_{ii}=1$ and $m_{ij}=m_{ji}\ge 2$ is an integer or $\infty$ for all $1\le i,j\le n$. The finite Coxeter groups rise as finite Euclidean reflection groups, but this is no longer the case for the infinite ones. A Coxeter group is determined by the Coxeter-Dynkin diagram, whose vertices are the generators, with edges connecting $r_i,r_j$ when $m_{ij}>2$. The group is {\bf{irreducible}} if the diagram is connected.
When considering mixability, it suffices to assume the group is irreducible, as each Coxeter group is a direct product of irreducible Coxeter groups. We refer the reader to \cite{CoxeterMoser80,Humphreys90} for further details on Coxeter groups.

Using all the above tools, we are able to study the class of finite Coxeter groups. We focus on the irreducible Coxeter groups, since mixability is preserved under direct products.

\begin{thm}\label{thm:coxeter}
  All finite irreducible Coxeter groups, with the possible exception of those of types $\Cox{E}{6}$, $\Cox{E}{7}$ and $\Cox{E}{8}$, are mixable.
\end{thm}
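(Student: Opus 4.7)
The plan is to treat each finite irreducible Coxeter group case-by-case according to the classification, reducing each type (other than $\Cox{E}{6}$, $\Cox{E}{7}$, $\Cox{E}{8}$) to previously established mixable groups via the closure properties under extensions, semidirect products, and wreath products.

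For the infinite families, the Weyl group of type $\Cox{A}{n}$ is the symmetric group $\Sym{n+1}$, shown mixable in \Eref{exmpl:Sn-mix}. Types $\Cox{B}{n}$ and $\Cox{D}{n}$ admit the standard semidirect-product descriptions $(\Z/2\Z)^n \rtimes \Sym{n}$ and $(\Z/2\Z)^{n-1} \rtimes \Sym{n}$ respectively; in each case both factors are mixable (the abelian $2$-group by \Cref{cor:2-groups}, and the symmetric group by \Eref{exmpl:Sn-mix}), so mixability follows from \Cref{cor:products}. The rank-$2$ dihedral types $\Cox{G}{2} \cong \Dih{6}$ and $\Cox{I}{2}(m) \cong \Dih{m}$ are handled directly by \Cref{cor:dihedral}.

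The remaining non-$E$ exceptional types are treated via their classical structural descriptions. For $\Cox{H}{3}$, the full icosahedral reflection group splits as $\Alt{5}\times\Z/2\Z$, a direct product of groups mixable by \Cref{cor:alternating} and \Cref{cor:2-groups}, hence mixable by \Cref{cor:products}. For $\Cox{F}{4}$, we use the identification $\Cox{F}{4} \cong \Cox{D}{4}\rtimes\Sym{3}$ coming from the fact that the full orthogonal automorphism group of the $D_4$ root system has order $1152$, with $\Sym{3}$ the triality group of the $D_4$ Dynkin diagram; so mixability again follows from \Cref{cor:products} applied to the mixable factors $\Cox{D}{4}$ and $\Sym{3}$. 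For $\Cox{H}{4}$, the center is $\{\pm I\} \cong \Z/2\Z$ and the quotient $\Cox{H}{4}/Z(\Cox{H}{4})$ is isomorphic to the wreath product $\Alt{5}\wr\Z/2\Z$ (the two $\Alt{5}$-factors being swapped by the outer $\Z/2\Z$), which is mixable by \Cref{cor:products}; hence $\Cox{H}{4}$ is mixable as a central extension, via \Cref{cor:N-and-G/N}.

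The main obstacle unaddressed by this approach is the $\Cox{E}{n}$ series for $n = 6, 7, 8$. These three Weyl groups do not admit obvious decompositions as extensions of smaller mixable groups, and several of their irreducible complex representations have dimensions reaching into the thousands for $\Cox{E}{8}$, so the low-dimensional representation techniques of \Sref{sec:8} do not directly apply. They are therefore left as open cases in the statement of the theorem.
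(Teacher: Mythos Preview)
Your argument is correct and follows essentially the same case-by-case route through the classification as the paper does, with only cosmetic differences: for $\Cox{H}{4}$ you invoke the explicit description $\Cox{H}{4}/Z(\Cox{H}{4})\cong \Alt{5}\wr\Z/2\Z$ where the paper simply lists the composition factors $\Z/2\Z,\Z/2\Z,\Alt{5},\Alt{5}$, and for the dihedral types you cite \Cref{cor:dihedral} where the paper uses \Pref{prop:abelian-Z2}. One structural caution: \Cref{cor:dihedral} appears \emph{after} \Tref{thm:coxeter} in the paper, so invoking it here is a forward reference (its proof is independent, so there is no circularity, but \Pref{prop:abelian-Z2} is the cleaner citation).
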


We do not know whether the three remaining groups are mixable or not.

\begin{proof}
  We use the classification of finite irreducible Coxeter groups, proving that each one is mixable by the various techniques obtained above.

  \begin{itemize}
    \item $\Cox{A}{n}$. The Coxeter group of type $\Cox{A}{n}$ is isomorphic to $\Sym{n+1}$, which is mixable by \Eref{exmpl:Sn-mix}.
    \item $\Cox{B}{n}$. This Coxeter group is the group of signed permutations, which can be written as $(\Z/2\Z)\wr\Sym{n}$. This group is mixable by the mixability of $\Sym{n}$ and \Cref{cor:products}.
    \item $\Cox{D}{n}$. This is a subgroup of index $2$ in $\Cox{B}{n}$. This group is isomorphic to a semidirect product $(\Z/2\Z)^{n-1}\rtimes\Sym{n}$, and hence mixable by \Cref{cor:products}.
    \item $\Cox{F}{4}$. The Coxeter group $\Cox{F}{4}$ is isomorphic to a semidirect product $\Cox{F}{4}\cong\Cox{D}{4}\rtimes\Sym{3}$, hence it is mixable.
    \item $\Cox{H}{3}$. The coxeter group $\Cox{H}{3}$ is isomorphic to a direct product $\Alt{5}\times\Z/2\Z$, and is thus mixable.
    \item $\Cox{H}{4}$. The composition factors of $\Cox{H}{4}$ are $\Z/2\Z,\Z/2\Z,\Alt{5},\Alt{5}$, which are all mixable, and thus $\Cox{H}{4}$ is mixable as well by \Cref{cor:N-and-G/N}.
    \item $\Cox{I}{2}(n)$. The Coxeter group of type $\Cox{I}{2}(n)$ is the dihedral group $\Dih{n}$ of order $2n$. This group is a semidirect product $(\Z/n\Z)\rtimes(\Z/2\Z)$, and is therefore mixable by \Pref{prop:abelian-Z2}.
  \end{itemize}
\end{proof}

Following the proof of the theorem, one may get upper bounds on the mixing lengths of the finite Coxeter groups. We record the nontrivial one in the following corollary:

\begin{cor}\label{cor:dihedral}
    If $n=2^tm$ for odd $m$, then $\mixlen(\Dih{n})\le t+m$.
\end{cor}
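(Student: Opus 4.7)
My plan is to combine a normal-subgroup reduction with the representation-theoretic machinery of \Sref{sec:8}. Let $r$ denote the rotation of order $n = 2^t m$ in $\Dih{n}$ and set $N = \langle r^m\rangle$; since $r^m$ has order $2^t$, we have $N \cong \Z/2^t\Z$. The subgroup $N$ is normal because $s r^m s^{-1} = r^{-m} \in N$, and the quotient $\Dih{n}/N$ is generated by the image of $r$, now of order $m$, together with the image of $s$, so $\Dih{n}/N \cong \Dih{m}$. Since $N$ is a cyclic $2$-group, $\mixlen(N) = t$ by \Cref{cor:2-groups}, and \Cref{cor:N-and-G/N} then yields $\mixlen(\Dih{n}) \le t + \mixlen(\Dih{m})$. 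It therefore suffices to prove $\mixlen(\Dih{m}) \le m$ for odd $m$.

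For odd $m$, the nontrivial irreducible complex representations of $\Dih{m}$ are the sign representation $\epsilon$ and $(m-1)/2$ two-dimensional real representations $\rho_1,\dots,\rho_{(m-1)/2}$. In each $\rho_j$ the matrix $\rho_j(s)$ is an involution with eigenvalues $\pm 1$, so $\rho_j$ is potentially mixable, and the construction inside the proof of \Pref{prop:mix-rep-d-1} (applied with $a = s$) supplies $h_j \in \Dih{m}$ and $p_j \in (0,1]$ such that $\relem{s}{1/2},\,\relem{h_j s h_j^{-1}}{p_j},\,\relem{s}{1/2}$ is a three-step mixing of $\rho_j$. Such an $h_j$ exists because $\sum_{g\in\Dih{m}} \rho_j(gsg^{-1})$ is a scalar matrix $\lambda I$ with $\lambda = |\Dih{m}|\tr\rho_j(s)/2 = 0$, so the sum of the $(1,1)$-entries of the conjugates equals zero and forces some $\rho_j(h_j s h_j^{-1})_{1,1}$ to be negative.

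The heart of the argument is to concatenate these three-step mixings while sharing reflections. In $\C[\Dih{m}]$ one computes $(\tfrac{1}{2} e + \tfrac{1}{2} s)^{2} = \tfrac{1}{2} e + \tfrac{1}{2} s$, so two adjacent copies of $\relem{s}{1/2}$ in a random subproduct collapse into one. Identifying the trailing $\relem{s}{1/2}$ of the $\rho_j$-mixing with the leading $\relem{s}{1/2}$ of the $\rho_{j+1}$-mixing yields the length-$m$ sequence
\[
\relem{s}{1/2},\,\relem{h_1 s h_1^{-1}}{p_1},\,\relem{s}{1/2},\,\relem{h_2 s h_2^{-1}}{p_2},\,\dots,\,\relem{h_{(m-1)/2} s h_{(m-1)/2}^{-1}}{p_{(m-1)/2}},\,\relem{s}{1/2}.
\]
For each $j$, the three consecutive factors in positions $2j-1,\,2j,\,2j+1$ multiply to zero in $\rho_j$, so the full product vanishes in $\rho_j$; and the very first factor equals $\tfrac{1}{2}(1 + \epsilon(s)) = 0$, killing $\epsilon$. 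By \Tref{thm:rep-mix} the sequence mixes $\Dih{m}$, proving $\mixlen(\Dih{m}) \le m$ and hence $\mixlen(\Dih{n}) \le t + m$.

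The main technical point will be the shared-reflection bookkeeping: one must verify that for each $j$ the length-three mixing window for $\rho_j$ really sits as a contiguous block inside the length-$m$ word, and that the element $h_j$ supplied by \Pref{prop:mix-rep-d-1} can always be chosen in $\Dih{m}$. Both follow from the observations above, and the cases $m=3$ (recovering the Angel--Holroyd mixing of $\Sym{3} = \Dih{3}$) and $m=5$ (length $5$) provide reassuring sanity checks.
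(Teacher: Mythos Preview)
Your proof is correct and follows essentially the same route as the paper: reduce via the normal $2$-subgroup $\langle r^m\rangle$ to $\Dih{m}$, then exploit the three-step structure $\relem{\tau}{1/2},\relem{g_j}{p_j},\relem{\tau}{1/2}$ from \Pref{prop:mix-rep-d-1} for each two-dimensional $\rho_j$ and overlap the shared $\relem{\tau}{1/2}$'s to get a length-$m$ sequence killing every nontrivial irreducible. Your idempotency remark $(\tfrac12 e+\tfrac12 s)^2=\tfrac12 e+\tfrac12 s$ is correct but superfluous --- the argument only needs that positions $2j-1,2j,2j+1$ form the required contiguous block, which you also state.
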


\begin{proof}
  Write $\Dih{n} = \sg{\sigma,\tau\suchthat \sigma^n=\tau^2=(\tau\sigma)^2=1}$, and consider the normal subgroup $N=\sg{\sigma^m}\vartriangleleft\Dih{n}$. Since $N\cong\Z/2^t\Z$ is mixable and $\Dih{n}/N\cong\Dih{m}$ is mixable, we deduce that
  \[
    \mixlen(\Dih{n})\le\mixlen(\Z/2^t\Z)+\mixlen(\Dih{m})=t+\mixlen(\Dih{m}).
  \]
  It is therefore enough to show that $\mixlen(\Dih{m})\le m$.

  To prove this bound, we will find a sequence $\relem{g_1}{p_1}, \dots, \relem{g_m}{p_m}$ of length $m$ that mixes all nontrivial irreducible representations of $\Dih{m}$. Abusing notation, we denote by $\sigma,\tau$ the generators of $\Dih{m}$ as above. Recall that the nontrivial irreducible representations of $\Dih{m}$ are the following:
  \begin{itemize}
    \item One $1$-dimensional representation, induced from the natural projection $\Dih{m}\to \Dih{m}/\sg{\sigma}\cong\set{\pm 1}$;
    \item $k\coloneqq\frac{m-1}{2}$ $2$-dimensional representations $\rho_1,\dots,\rho_k$, where $\rho_j$ is given by
    \[
        \sigma\mapsto\begin{pmatrix}\cos\frac{2\pi j}{n}&\sin\frac{2\pi j}{n}\\-\sin\frac{2\pi j}{n}&\cos\frac{2\pi j}{n}\end{pmatrix},\;\;\;\tau\mapsto\begin{pmatrix}1&0\\0&-1\end{pmatrix}
    \]
    for each $1\le j\le k$.
  \end{itemize}
  By the proof of \Cref{cor:mix-2-dim-real}, for each $1\le j\le k$ there exists some $g_j\in \Dih{m}$ and $0\le p_j\le 1$ such that the sequence $\relem{\tau}{\frac{1}{2}},\relem{g_j}{p_j},\relem{\tau}{\frac{1}{2}}$ mixes the representation $\rho_j$. Therefore, the sequence
  \[
    \relem{\tau}{\frac{1}{2}},\relem{g_1}{p_1},\relem{\tau}{\frac{1}{2}},\relem{g_2}{p_2},\dots,\relem{g_k}{p_k},\relem{\tau}{\frac{1}{2}}
  \]
  mixes all nontrivial irreducible representations of $\Dih{m}$, hence mixes the group $\Dih{m}$. This sequence is of length $2k+1=m$, hence $\mixlen(\Dih{m})\le m$ as required.
\end{proof}

\section{\texorpdfstring{Mixability via $2$-transitive actions}{Mixability via 2-transitive actions}}\label{sec:2-transitive}

We have seen the use of group actions and representations to prove the mixability of groups. In this section, we will show how to use higher transitivity of a group action to prove the mixability of the acting group.\medskip

Recall that an action of a group $G$ on a set $X$ is called \textbf{$2$-transitive}, if for any $x_1\neq x_2\in X$ and $y_1\neq y_2\in X$ there exists $g\in G$ such that $gx_1=y_1$ and $gx_2=y_2$. In this case, for any $x\in X$ the (one-point) stabilizer $\stab_G(x)$ acts transitively on $X\setminus\set{x}$.

\begin{thm}\label{thm:2-transitive-act}
  Let $G$ be a finite group acting $2$-transitively on a set~$X$. Suppose that for some $x\in X$ (equivalently, for all $x\in X$), the action of the stabilizer $H=\stab_G(x)$ on $X\setminus\set{x}$ is mixable. Then the action of $G$ on $X$ is mixable, and
  \[
    \mixlen(G,X) \le \mixlen(H,X\setminus\set{x}) + 1.
  \]
\end{thm}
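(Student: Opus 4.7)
The plan is to build a length-$(\mixlen(H, X\setminus\set{x})+1)$ mixing of the $G$-action on $X$ by prepending a single Bernoulli step to a mixing of the $H$-action on $X\setminus\set{x}$. Set $n=\card{X}$ and $p=(n-1)/n$.

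First I would fix some $y_0\in X\setminus\set{x}$ and, using transitivity of $G$ on $X$, pick $g_0\in G$ with $g_0 x=y_0$. By $2$-transitivity, $H$ acts transitively on $X\setminus\set{x}$, so the hypothesis yields a mixing sequence $\relem{h_1}{q_1},\dots,\relem{h_m}{q_m}$ of this action, and by the base-point-independence noted immediately after the definition of mixability of actions, we may take its base point to be $y_0$. Here $m=\mixlen(H,X\setminus\set{x})$.

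Next I would consider the length-$(m+1)$ random subproduct
$$\boldsymbol{g}:=h_1^{\delta_1}\cdots h_m^{\delta_m}\,g_0^{\eps_0},$$
with all the $\delta_i$'s and $\eps_0\sim\Ber(p)$ independent, and compute the distribution of $\boldsymbol{g}x$ by conditioning on $\eps_0$. When $\eps_0=0$ (probability $1/n$) the factor $g_0^{\eps_0}$ fixes $x$, and since each $h_i\in H$ fixes $x$, the full product fixes $x$, so the point $x$ receives total mass $1/n$. When $\eps_0=1$ (probability $(n-1)/n$), $g_0^{\eps_0}x=y_0$, and by choice of the $h_i$'s the image $h_1^{\delta_1}\cdots h_m^{\delta_m}y_0$ is uniform on $X\setminus\set{x}$, so every point of $X\setminus\set{x}$ receives mass $\frac{n-1}{n}\cdot\frac{1}{n-1}=\frac{1}{n}$. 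Summing over these two cases yields the uniform distribution on $X$, proving mixability with $\mixlen(G,X)\le m+1$.

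The only nontrivial point in the argument is the need to align the base point of the $H$-mixing with the image $g_0 x$; this is handled exactly by the base-point independence of mixability of transitive actions. No further structural input about $G$ is needed beyond $2$-transitivity, which is used precisely to make the $H$-action on $X\setminus\set{x}$ transitive so that the phrase ``this action is mixable'' is meaningful.
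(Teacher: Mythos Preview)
Your proof is correct and is essentially identical to the paper's: both prepend a single Bernoulli step $g_0^{\eps_0}$ (the paper writes $a^{\eps}$ with $a\in G\setminus H$) moving $x$ off itself with probability $(\card{X}-1)/\card{X}$, then apply an $H$-mixing of $X\setminus\set{x}$ based at the image point, conditioning on $\eps_0$ to verify uniformity. Your explicit remark about base-point alignment corresponds exactly to the paper's phrase ``with respect to $ax$''.
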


\begin{proof}
  We mix the action of $G$ on $X$ as follows. Fix $a\in G\setminus H$, and let~$\eps\sim\Ber(p)$ for $p=1-\frac{1}{\left|X\right|}$. Let $\boldsymbol{h}$ be a random subproduct in $H$ that mixes the action of $H$ on $X\setminus\set{x}$ with respect to $ax$ (i.e., $\boldsymbol{h}ax$ is uniform on $X\setminus\set{x}$). We claim that $\boldsymbol{g}=\boldsymbol{h}a^{\eps}$ mixes the action of $G$ on~$X$, i.e., $\boldsymbol{g}x$ is uniform on $X$.

  Indeed, $\boldsymbol{g}x=x$ if and only if $\eps=0$ (since otherwise $\boldsymbol{g}x=\boldsymbol{h}ax\ne x$), hence $\Pr(\boldsymbol{g}x=x)=\frac{1}{\left|X\right|}$. For any $x\ne x'\in X$, if $\boldsymbol{g}x=x'$ then we must have $\eps=1$, hence
  \begin{align*}
    \Pr(\boldsymbol{g}x=x') & = \Pr(\boldsymbol{g}x=x'\suchthat \eps=1)\Pr(\eps=1) \\
    & = \Pr(\boldsymbol{h}(ax)=x'\suchthat \eps=1)\Pr(\eps=1) \\
    & = \frac{1}{\left|X\right|-1}\cdot\frac{\left|X\right|-1}{\left|X\right|}=\frac{1}{\left|X\right|}.
  \end{align*}
\end{proof}

In terms of mixability of the group, we may deduce:
\begin{cor}\label{cor:2-transitive}
  Let $G$ be a finite group acting $2$-transitively on a set~$X$. Suppose that for some $x\in X$ (equivalently, for all $x\in X$), the stabilizer $H=\stab_G(x)$ is mixable. Then $G$ is mixable, and
  \[
    \mixlen(G) \le \mixlen(H) + \mixlen(H,X\setminus\set{x}) + 1.
  \]
\end{cor}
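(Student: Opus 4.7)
The plan is to deduce this corollary directly from the theorem immediately preceding it together with \Pref{prop:H-and-G/H}, essentially without any new work.

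First I would identify the $G$-set $X$ with the coset space $G/H$, using that the stabilizer of $x \in X$ is $H$ and that $G$ acts transitively on $X$ (which is built into $2$-transitivity). Under this identification, $\mixlen(G,X) = \mixlen(G,G/H)$. The hypothesis that $H$ is mixable, combined with the corollary just after \Lref{lem:act-of-uniform}, immediately implies that every transitive action of $H$ is mixable; in particular the action of $H$ on $X \setminus \{x\}$ (which is transitive by $2$-transitivity of $G$) is mixable, and its mixing length is at most $\mixlen(H)$—though we do not even need this latter bound, only the finiteness of $\mixlen(H, X \setminus \{x\})$.

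Next I would feed this into \Tref{thm:2-transitive-act}, which gives that the action of $G$ on $X$ is mixable with
\[
    \mixlen(G, G/H) \;=\; \mixlen(G,X) \;\le\; \mixlen(H,X\setminus\set{x}) + 1.
\]
Finally, since $H$ is mixable by hypothesis and the action of $G$ on $G/H$ is now known to be mixable, \Pref{prop:H-and-G/H} yields that $G$ itself is mixable, with
\[
    \mixlen(G) \;\le\; \mixlen(H) + \mixlen(G, G/H) \;\le\; \mixlen(H) + \mixlen(H, X \setminus \set{x}) + 1.
\]

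There is no real obstacle to overcome here; the corollary is a routine chaining of \Tref{thm:2-transitive-act} (handling the $G$-action on $X$) with \Pref{prop:H-and-G/H} (lifting mixability of the stabilizer and the coset action to mixability of $G$). The only minor point to record is the identification $X \cong G/H$ as $G$-sets, which is automatic from transitivity.
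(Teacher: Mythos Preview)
Your proposal is correct and matches the paper's own proof essentially step for step: the paper also notes that mixability of $H$ gives mixability of its action on $X\setminus\set{x}$, applies \Tref{thm:2-transitive-act}, identifies the action on $X$ with that on $G/H$, and then invokes \Pref{prop:H-and-G/H}.
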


\begin{proof}
    The action of $H$ on $X\setminus\set{x}$ is mixable since $H$ is mixable, hence by \Tref{thm:2-transitive-act} the action of $G$ on $X$ is mixable as well with
    \[
        \mixlen(G,X) \le \mixlen(H,X\setminus\set{x}) + 1.
    \]
    Recall also that since the action is transitive, the action of $G$ on $X$ is isomorphic to the action of $G$ on $G/H$, hence $\mixlen(G,G/H) = \mixlen(G,X)$. The corollary now follows from and \Pref{prop:H-and-G/H}.
\end{proof}

This is utilized in the next section to prove the mixability of several families of groups.

Using induction, we can also write a similar criterion for higher transitivity:
\begin{cor}
  Let $G$ be a finite group that acts $k$-transitively on a set~$X$ for some $k\ge 2$. Let $G_{k-1}$ be the stabilizer of some distinct points $x_1,\dots,x_k\in X$. Recall that by $k$-transitivity, the action of $G_{k-1}$ on $X\setminus\set{x_1,\dots,x_{k-1}}$ is transitive.
  \begin{enumerate}
      \item If the action of $G_{k-1}$ on $X\setminus\set{x_1,\dots,x_{k-1}}$ is mixable, then the action of $G$ on $X$ is mixable. Consequently,
      \item If $G_{k-1}$ is mixable then $G$ is mixable.
  \end{enumerate}
\end{cor}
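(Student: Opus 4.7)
The plan is to prove both statements simultaneously by induction on $k$, using \Tref{thm:2-transitive-act} and \Cref{cor:2-transitive} as the base case $k=2$. (I read the statement as $G_{k-1}$ being the stabilizer of the $k-1$ points $x_1,\dots,x_{k-1}$, which is what makes the transitivity claim on $X\setminus\set{x_1,\dots,x_{k-1}}$ correct.)

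For the inductive step, I would peel off one point at a time. Given that $G$ acts $k$-transitively on $X$, the one-point stabilizer $G_1 = \stab_G(x_1)$ acts $(k-1)$-transitively on $X\setminus\set{x_1}$, and the joint stabilizer
\[
G_{k-1} = \stab_G(x_1,\dots,x_{k-1})
\]
coincides with $\stab_{G_1}(x_2,\dots,x_{k-1})$ inside $X\setminus\set{x_1}$. So the inductive hypothesis applies to the $(k-1)$-transitive action of $G_1$ on $X\setminus\set{x_1}$ with stabilizer $G_{k-1}$.

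For part (1), if the action of $G_{k-1}$ on $X\setminus\set{x_1,\dots,x_{k-1}}$ is mixable, the induction hypothesis yields that the action of $G_1$ on $X\setminus\set{x_1}$ is mixable; then \Tref{thm:2-transitive-act} applied to the $2$-transitive action of $G$ on $X$ with point stabilizer $G_1$ gives the mixability of the action of $G$ on~$X$. For part (2), if $G_{k-1}$ is itself mixable, the induction hypothesis (applied to $G_1$ acting $(k-1)$-transitively on $X\setminus\set{x_1}$) gives that $G_1$ is mixable, and then \Cref{cor:2-transitive} gives mixability of $G$. Note that part (2) also follows directly from part (1) combined with the fact (the corollary after \Lref{lem:act-of-uniform}) that any transitive action of a mixable group is mixable, plus an application of \Pref{prop:H-and-G/H}, but invoking the base corollary is cleanest.

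There is no substantial obstacle here; the whole content of the higher-transitivity criterion is the repeated application of the $2$-transitive case, so the only care required is bookkeeping of the stabilizer chain $G \ge G_1 \ge \dots \ge G_{k-1}$ and ensuring that at each step the residual action on the complementary set remains transitive, which is guaranteed by the assumed $k$-transitivity. If one wanted a quantitative bound on the mixing length, telescoping through \Cref{cor:2-transitive} would give
\[
\mixlen(G,X) \le \mixlen(G_{k-1},X\setminus\set{x_1,\dots,x_{k-1}}) + (k-1),
\]
and correspondingly $\mixlen(G) \le \mixlen(G_{k-1}) + \mixlen(G_{k-1},X\setminus\set{x_1,\dots,x_{k-1}}) + (k-1)$, but since the statement only asks for mixability itself this is optional.
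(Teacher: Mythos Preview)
Your inductive argument is correct and is exactly the approach the paper intends; the paper gives no proof beyond the sentence ``Using induction, we can also write a similar criterion for higher transitivity'' that precedes the statement, and your reduction to \Tref{thm:2-transitive-act} and \Cref{cor:2-transitive} via the stabilizer chain $G\ge G_1\ge\cdots\ge G_{k-1}$ is precisely what is meant.

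One small caveat about your optional remark: the second quantitative bound,
\[
\mixlen(G)\le\mixlen(G_{k-1})+\mixlen\bigl(G_{k-1},X\setminus\set{x_1,\dots,x_{k-1}}\bigr)+(k-1),
\]
does not actually follow by telescoping \Cref{cor:2-transitive}. Each inductive step contributes its own copy of an action-mixing term (bounding $\mixlen(G_i)$ requires $\mixlen(G_{i+1})$ \emph{plus} $\mixlen(G_{i+1},X\setminus\set{x_1,\dots,x_{i+1}})$), so the middle summand accumulates a factor of $k-1$ rather than appearing once. Your first bound on $\mixlen(G,X)$ is correct, and since the corollary only asserts mixability, this does not affect the proof.
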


As another corollary, we show that certain semidirect products are mixable:

\begin{prop}\label{prop:mix-spe-semi}
    Let $G=N\rtimes Q$ be a semidirect product. Assume that $Q$ is mixable, and that the action of $Q$~on $N$ has exactly two orbits. Then $G$ is mixable.
\end{prop}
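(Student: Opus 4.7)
Proof proposal: The plan is to exhibit a $2$-transitive action of $G$ with stabilizer $Q$, and then invoke \Cref{cor:2-transitive}.

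First I would observe that since $Q$ acts on $N$ by automorphisms, the identity $e \in N$ is always a fixed point. Hence the assumption of exactly two orbits forces them to be $\set{e}$ and $N\setminus\set{e}$, so $Q$ acts transitively on $N \setminus \{e\}$. The action of $G$ I have in mind is the coset action of $G$ on $G/Q$, which via the bijection $nQ \leftrightarrow n$ becomes an action of $G$ on the set $N$: writing an arbitrary $g \in G$ uniquely as $g = n'q'$ with $n'\in N$, $q'\in Q$, one has
\[
  g \cdot n \;=\; n' \cdot q'(n),
\]
where $q'(n)$ denotes the image of $n$ under the automorphism corresponding to $q'\in Q$. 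Thus $N$ acts on itself by left translations, and $Q$ acts by its given action on~$N$.

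Next I would verify that this action is $2$-transitive. Given pairs $n_1 \neq n_2$ and $m_1 \neq m_2$ in $N$, the requirement $g\cdot n_i = m_i$ for $i=1,2$ translates (after eliminating $n'$) to $q'(n_1^{-1}n_2) = m_1^{-1}m_2$. Both $n_1^{-1}n_2$ and $m_1^{-1}m_2$ are nontrivial elements of $N$, so transitivity of $Q$ on $N \setminus \set{e}$ yields a suitable $q' \in Q$, and then $n' = m_1 q'(n_1)^{-1}$ completes the element $g = n'q' \in G$. A direct computation shows that the stabilizer of $e \in N$ is precisely $Q$, since $g\cdot e = n'$ equals $e$ iff $n' = e$.

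At this point the hypotheses of \Cref{cor:2-transitive} are satisfied: $G$ acts $2$-transitively on the set $N$, and the point stabilizer $Q$ is mixable by assumption. Therefore $G$ is mixable. The only genuinely nontrivial ingredient is the $2$-transitivity check, and that reduces immediately to the single-orbit condition on $Q \curvearrowright N \setminus \set{e}$, so I do not anticipate any obstacle. As a byproduct the corollary also yields the bound $\mixlen(G) \le \mixlen(Q) + \mixlen(Q, N\setminus\set{e}) + 1$.
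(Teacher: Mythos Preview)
Your proof is correct and follows essentially the same approach as the paper: both show that the action of $G$ on $G/Q$ is $2$-transitive with point stabilizer $Q$, and then invoke \Cref{cor:2-transitive}. The only cosmetic difference is in the verification of $2$-transitivity---the paper argues via the double coset decomposition $G = Q \cup QaQ$ (equivalently, that $Q$ acts transitively on $(G/Q)\setminus\{Q\}$), whereas you identify $G/Q$ with $N$ and check the definition directly; these are two phrasings of the same computation.
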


\begin{proof}
    We will show that, under the above assumptions, the action of~$G$ on $G/Q$ is $2$-transitive. Since the point stabilizer of this action is~$Q$, which is mixable by assumption, it will follow from \Cref{cor:2-transitive} that~$G$ is mixable.

    Fix $e\ne a\in N$. We claim that $G=Q\cup QaQ$. Indeed, let $g\in G$, and write $g=bh$ for $b\in N$ and $h\in Q$. If $b=e$, then $g=h\in Q$, and we are done. Otherwise, by assumption there is some $h'\in Q$ such that $h'a(h')^{-1}=b$, and thus $g=bh=h'a(h')^{-1}h\in QaQ$.

    To prove that the action of $G$ on $G/Q$ is $2$-transitive, it is enough to prove that the point stabilizer $Q$ of the coset $Q$ acts transitively on $(G/Q)\setminus\set{Q}$. But this follows directly from $G=Q\cup QaQ$, which shows that $Q$ maps $aQ$ to $(G/Q)\setminus\set{Q}$.
\end{proof}

\begin{rem}
The conditions in \Pref{prop:mix-spe-semi} imply that $N$ is elementary abelian.

Indeed, the transitive action on the nontrivial elements means all elements have the same order, which thus must be a prime $p$; as a $p$-group $N$  has a nontrivial center, but then all elements are central and~$N$ is a vector space over $\Z_p$.
\end{rem}

\section{Matrix groups}\label{sec:10}

We turn to study the mixability of the matrix groups $\PSL$, $\SL$, $\PGL$ and $\GL$. We begin by stating the implications between the mixability of these groups.

\begin{prop}\label{prop:matrix-rels}
    Let $q$ be a prime-power and let $d\ge 1$.
    \begin{enumerate}
        \item If $\SL[d](\F_q)$ is mixable, then $\PSL[d](\F_q)$ is mixable.
        \item If $\PGL[d](\F_q)$ is mixable, then $\gcd(q-1,d)$ is a power of $2$.
        \item If $\GL[d](\F_q)$ is mixable, then $\PGL[d](\F_q)$ is mixable and $q-1$ is a power of $2$.
        \item If $\PSL[d](\F_q)$ is mixable and $\gcd(q-1,d)$ is a power of $2$, then $\SL[d](\F_q)$ and $\PGL[d](\F_q)$ are mixable.
        \item If $\PSL[d](\F_q)$ or $\PGL[d](\F_q)$ are mixable, and $q-1$ is a power of~$2$, then $\GL[d](\F_q)$ is mixable.
    \end{enumerate}
\end{prop}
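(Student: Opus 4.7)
The plan is to handle the five items by systematically invoking the extension/quotient closure theorem (\Tref{thevar}, \Cref{cor:quotient}, \Cref{cor:N-and-G/N}) together with the odd-order obstruction (\Tref{thm:odd-order}), after identifying the correct normal subgroups and quotients among the four matrix groups. The key numerical facts are that $\GL_d(\F_q)/\SL_d(\F_q) \cong \F_q^{\times}$ via the determinant, $Z(\GL_d(\F_q)) \cong \F_q^{\times}$ consists of scalar matrices, $Z(\SL_d(\F_q)) \cong \mu_{\gcd(q-1,d)}$ (the $d$-th roots of unity in $\F_q^{\times}$), and $\PGL_d(\F_q)/\PSL_d(\F_q) \cong \F_q^{\times}/(\F_q^{\times})^d$ is cyclic of order $\gcd(q-1,d)$.

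For item~(1), $\PSL_d(\F_q)$ is a quotient of $\SL_d(\F_q)$, so \Cref{cor:quotient} applies directly. For item~(2), the quotient $\PGL_d(\F_q)/\PSL_d(\F_q)$ is cyclic of order $\gcd(q-1,d)$; if this number had an odd divisor greater than one, $\PGL_d(\F_q)$ would inherit a nontrivial odd-order quotient, contradicting \Tref{thm:odd-order}. Item~(3) combines both ideas: $\PGL_d(\F_q) = \GL_d(\F_q)/Z(\GL_d(\F_q))$ is a quotient, so mixability is preserved by \Cref{cor:quotient}, while the determinant gives $\GL_d(\F_q)/\SL_d(\F_q) \cong \F_q^{\times}$, cyclic of order $q-1$; by \Tref{thm:odd-order} this forces $q-1$ to be a power of $2$.

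Items~(4) and~(5) are the real payoff. For~(4), the hypothesis that $\gcd(q-1,d)$ is a power of $2$ makes $Z(\SL_d(\F_q))$ a finite $2$-group, hence mixable by \Cref{cor:2-groups}; combining with the mixability of $\PSL_d(\F_q) = \SL_d(\F_q)/Z(\SL_d(\F_q))$ via \Cref{cor:N-and-G/N} yields mixability of $\SL_d(\F_q)$. For $\PGL_d(\F_q)$, the quotient $\PGL_d(\F_q)/\PSL_d(\F_q)$ is abelian of order $\gcd(q-1,d)$, a $2$-group, hence mixable; another application of \Cref{cor:N-and-G/N} concludes. For~(5), assuming $q-1$ is a power of~$2$ we also have $\gcd(q-1,d) \mid q-1$ a power of $2$, so~(4) shows $\SL_d(\F_q)$ mixable whenever $\PSL_d(\F_q)$ is, and then $\GL_d(\F_q)/\SL_d(\F_q) \cong \F_q^{\times}$ is a $2$-group, so \Cref{cor:N-and-G/N} gives $\GL_d(\F_q)$ mixable; alternatively, starting from mixability of $\PGL_d(\F_q)$, the central extension by the $2$-group $Z(\GL_d(\F_q)) \cong \F_q^{\times}$ is mixable by the same corollary.

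I do not expect a real obstacle here, as the proposition is essentially bookkeeping: the only content beyond the already-established closure properties of the class of mixable groups is the identification of the relevant normal subgroups and quotients among the four groups. The one point that requires a hint of care is making sure, in~(5), to argue separately from $\PSL_d(\F_q)$ (going through $\SL_d(\F_q)$) and from $\PGL_d(\F_q)$ (going through $Z(\GL_d(\F_q))$), since the hypothesis allows either starting point.
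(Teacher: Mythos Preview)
Your proposal is correct and follows essentially the same route as the paper: identify the relevant exact sequences among the four matrix groups, then apply closure of mixability under quotients (\Cref{cor:quotient}) and extensions (\Cref{cor:N-and-G/N}) together with the fact that a finite cyclic group is mixable if and only if its order is a power of~$2$. The paper packages these sequences into a single commuting diagram, but the content and the logical flow are the same as yours.
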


For any $d\ge 1$, write $\mu_d(\F_q)=\set{\alpha\in\F_q\suchthat\alpha^d=1}$, which is a subgroup of $\mul{\F_q}$ of order $\gcd(q-1,d)$.

\begin{proof}
    We use the classical commuting diagram with exact rows and columns:
    \[
        \begin{tikzcd}
            &  & 1 \arrow[d] & 1 \arrow[d] \\
            1 \arrow[r] & \mu_d(\F_q) \arrow[r] & \SL[d](\F_q) \arrow[r]\arrow[d] & \PSL[d](\F_q) \arrow[r]\arrow[d] & 1 \\
            1 \arrow[r] & \mul{\F_q} \arrow[r] & \GL[d](\F_q) \arrow[r]\arrow[d] & \PGL[d](\F_q) \arrow[r]\arrow[d] & 1 \\
            & & \mul{\F_q} \arrow[d] & \mul{\F_q} / (\mul{\F_q})^d \arrow[d] &  \\
            &  & 1 & 1
        \end{tikzcd}
    \]
    Note also that $\mu_d(\F_q)$ and $\mul{\F_q} / (\mul{\F_q})^d$ are both cyclic groups of order $\gcd(q-1,d)$, and are thus mixable if and only if $\gcd(q-1,d)$ is a power of $2$. Therefore, the first three implications follow since the class of mixable groups is closed under projections, and the last two implications follow from \Cref{cor:N-and-G/N}.
\end{proof}

In some cases, we can prove mixability explicitly:

\begin{thm}\label{thm:PSL2-F2n}
    For all $n\ge 1$, the group $\PSL[2](\F_{2^n})$ is mixable.
\end{thm}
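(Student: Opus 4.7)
The plan is to combine the mixability of the $2$-transitive action of $G = \PSL[2](\F_{2^n}) = \SL[2](\F_{2^n})$ on $X = \mathbb{P}^1(\F_{2^n})$ with explicit Bruhat bookkeeping. The stabilizer of $\infty \in X$ is the Borel $B = U \rtimes T$, where $U$ is the upper-unipotent subgroup (isomorphic to $(\F_q,+)$, an elementary abelian $2$-group of order $q = 2^n$) and $T$ is the split torus (cyclic of odd order $q-1$). On $X \setminus \{\infty\} = \F_q$ the action of $B$ is the affine action, so the translation subgroup $U$ already acts regularly; since $U$ is a $2$-group it is mixable by \Cref{cor:2-groups}, hence so is the action of $B$ on $\F_q$. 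Applying \Tref{thm:2-transitive-act} with the Weyl involution $w = \left(\begin{smallmatrix} 0 & 1 \\ 1 & 0 \end{smallmatrix}\right)$ (which swaps $\infty$ and $0$) produces an explicit mixing $\boldsymbol{g} = \boldsymbol{h}\,w^{\eps}$ of the action of $G$ on $X$, where $\boldsymbol{h}$ is a mixing of $U$ (hence uniform on $U$) and $\eps \sim \Ber\!\left(\tfrac{q}{q+1}\right)$; in particular $\boldsymbol{g}$ is supported on $U \cup Uw$.

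The challenge is to upgrade this action-mixing to a group-mixing of $G$. Since $T$ is cyclic of odd order $q-1$, \Tref{thm:odd-order} forbids mixability of $B$, so \Cref{cor:2-transitive} is not available. The remedy is to append two further independent random subproducts: a uniform $\boldsymbol{u}'$ on $U$, and a uniform $\boldsymbol{\eta}$ on the normalizer $N_G(T) \cong \Dih{q-1}$, which is dihedral of order $2(q-1)$ and hence mixable by \Cref{cor:dihedral} since $q-1$ is odd. The claim is that $\boldsymbol{g}\,\boldsymbol{u}'\,\boldsymbol{\eta}$ is uniform on $G$, which proves $G$ mixable.

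The verification is pure Bruhat counting. On the $\eps = 0$ branch, $\boldsymbol{g}\,\boldsymbol{u}'$ is uniform on $U$ (convolution of two uniforms in the group $U$); on the $\eps = 1$ branch, Bruhat uniqueness in the big cell $UwB$ makes $(h,u') \mapsto hwu'$ a bijection $U \times U \to UwU$, so $\boldsymbol{g}\,\boldsymbol{u}'$ is uniform on $UwU$. Combining, $\boldsymbol{g}\,\boldsymbol{u}'$ is uniform on the disjoint union $U \sqcup UwU$, of size $q(q+1)$. The Bruhat decomposition $G = B \sqcup UwB$ rewrites as $G = UT \sqcup UwUT$, and multiplying on the right by $w$ gives $G = UTw \sqcup UwUTw$; in both rewrites Bruhat uniqueness makes the multiplication maps $(U \sqcup UwU) \times T \to G$ and $(U \sqcup UwU) \times Tw \to G$ bijections. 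Since $\Dih{q-1} = T \sqcup Tw$, every $g \in G$ has exactly two preimages in $(U \sqcup UwU) \times \Dih{q-1}$ under multiplication, yielding
\[
\Pr\!\left[\boldsymbol{g}\,\boldsymbol{u}'\,\boldsymbol{\eta} = g\right] \;=\; \frac{2}{q(q+1) \cdot 2(q-1)} \;=\; \frac{1}{|G|}.
\]
The conceptual crux, and the step I expect to be the main obstacle, is recognizing the mixable dihedral subgroup $N_G(T)$ as the natural substitute that supplies the toral randomness which the non-mixable torus $T$ cannot itself provide; once this substitute is identified, the Bruhat bookkeeping is routine.
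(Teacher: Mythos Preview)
Your proof is correct and is essentially the same as the paper's. Both arguments produce the identical random subproduct $\boldsymbol{h}\,w^{\eps}\,\boldsymbol{u}'\,\boldsymbol{\eta}$ (with $\boldsymbol{h},\boldsymbol{u}'$ uniform on the unipotent $U$, $\eps\sim\Ber(q/(q+1))$, and $\boldsymbol{\eta}$ uniform on $N_G(T)\cong\Dih{q-1}$), and both rest on the same key insight you correctly flag as the crux: the non-mixable torus $T$ sits inside the mixable dihedral $K=N_G(T)$. The only difference is packaging: the paper interprets the first three factors $\boldsymbol{h}\,w^{\eps}\,\boldsymbol{u}'$ as a mixing of the action on ordered pairs of projective points (whose stabilizer is $T$), and then invokes \Pref{prop:G/K-and-G/H} and \Pref{prop:H-and-G/H} abstractly, whereas you verify uniformity by direct Bruhat counting---which amounts to unfolding those two propositions in this instance.
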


\begin{proof}
    Consider the action of $\SL[2](\F_{2^n})$ on the projective line $\mathbb{P}^1(\F_{2^n})$. This action is $2$-transitive, so the induced action on pairs of projective points $X=\set{(x,y)\in(\mathbb{P}^1(\F_{2^n}))^2\suchthat x\ne y}$ is transitive. We write the points of $\mathbb{P}^1(\F_{2^n})$ as $[a:b]$ for $a,b\in\F_{2^n}$ such that $(a,b)\ne(0,0)$, so that $\mathbb{P}^1(\F_{2^n})=\set{[a:1]\suchthat a\in\F_{2^n}}\cup\set{[1:0]}$. We will prove that the action of $\SL[2](\F_{2^n})$ on $X$ is mixable, and that the stabilizer of this action is contained in a mixable subgroup of $\SL[2](\F_{2^n})$, which together imply the mixability of $\SL[2](\F_{2^n})$.

    We first prove that the action of $\SL[2](\F_{2^n})$ on $X$ is mixable. Recall that $\F_{2^n}$ is embedded, as an additive group, in $\SL[2](\F_{2^n})$, via the matrices $\begin{pmatrix}1&\alpha\\0&1\end{pmatrix}$. As $\F_{2^n}$ is mixable, there is a random subproduct $\boldsymbol{h}$ of elements of this form such that $\boldsymbol{h}=\begin{pmatrix}1&\boldsymbol{\alpha}\\0&1\end{pmatrix}$ where $\boldsymbol{\alpha}$ is uniform on $\F_{2^n}$. We take an independent copy $\boldsymbol{h}'=\begin{pmatrix}1&\boldsymbol{\alpha}'\\0&1\end{pmatrix}$ of $\boldsymbol{h}$.

    Let $\eps\sim\Ber(1-\frac{1}{2^n+1})$ be a Bernoulli random variable, independent of $\boldsymbol{h}$ and of $\boldsymbol{h}'$, and consider the random subproduct
    \[
        \boldsymbol{g} = \boldsymbol{h}\begin{pmatrix}0&-1\\1&0\end{pmatrix}^{\eps}\boldsymbol{h}.
    \]
    We will show that $\boldsymbol{g}$ mixes the action of $\SL[2](\F_{2^n})$ on $X$ with respect to the starting point $([1:0,0:1])$.

    Indeed, note that
    \[
        \boldsymbol{g} = \begin{cases}\begin{pmatrix}1&\boldsymbol{\alpha}+\boldsymbol{\alpha}'\\0&1\end{pmatrix}, & \text{if }\eps=0\\ \begin{pmatrix}\boldsymbol{\alpha}&\boldsymbol{\alpha}\boldsymbol{\alpha}'-1\\1&\boldsymbol{\alpha}'\end{pmatrix}, & \text{if }\eps=1.\end{cases}
    \]
    It follows that
    \[
        (\boldsymbol{g}[1:0],\boldsymbol{g}[0:1]) = \begin{cases}([1:0],[\boldsymbol{\alpha}+\boldsymbol{\alpha}':1]) , & \text{if }\eps=0 \\ ([\boldsymbol{\alpha}:1],[\boldsymbol{\alpha}\boldsymbol{\alpha}'-1:\boldsymbol{\alpha}']),&\text{if } \eps=1,\end{cases}
    \]
    which is distributed uniformly on $X$. This proves that the action of $\PSL[2](\F_{2^n})$ is mixable.

    The stabilizer of $([1:0],[0:1])$ under this action is the subgroup
    \[
        H = \set{\begin{pmatrix}\alpha&0\\0&\alpha^{-1}\end{pmatrix} \suchthat \alpha\in\mul{\F_{2^n}}}\cong\mul{\F_{2^n}},
    \]
    which is not mixable. However, consider the subgroup
    \[
        K = \sg{H,\begin{pmatrix}0&-1\\1&0\end{pmatrix}}.
    \]
    Since $\F_{2^n}$ is a cyclic group, one easily checks that $K\cong\Dih{2^n-1}$, and is thus mixable by \Tref{thm:coxeter}.

    Since $H$ is the stabilizer of the above action and the action is mixable, it follows that the action of $\SL[2](\F_{2^n})$ on $\SL[2](\F_{2^n})/H$ is mixable. Therefore, by \Pref{prop:G/K-and-G/H}, the action of $\SL[2](\F_{2^n})$ on $\SL[2](\F_{2^n})/K$ is also mixable. \Pref{prop:H-and-G/H} shows that $\SL[2](\F_{2^n})$ is mixable, concluding the proof.
\end{proof}

We remark that while $\PSL[2](\F_{2^n})=\SL[2](\F_{2^n})=\PGL[2](\F_{2^n})$ is mixable, the group $\GL[2](\F_{2^n})$ is not mixable for all $n\ge 2$ by \Pref{prop:matrix-rels}.

\section{Finite simple groups via $2$-transitivity}\label{sec:11}

In this section, we study the mixability of finite simple groups that have a $2$-transitive action.

\subsection{Matrix groups}

The first family of examples we consider are the special linear groups $\PSL[d](\F_q)$, which act $2$-transitively on the $(d-1)$-dimensional projective space $\mathbb{P}^{d-1}(\F_q)$. We show that in several cases, one can apply \Cref{cor:2-transitive} to prove their mixability.

We begin by computing the stabilizer of this action.

\begin{lem}\label{lem:PSL-stab}
    The stabilizer in the action of $\PSL[d](\F_q)$ on $\mathbb{P}^{d-1}(\F_q)$ is isomorphic to the semidirect product
    \[
        \F_q^{d-1}\rtimes(\GL[d-1](\F_q)/\mu_d(\F_q)I),
    \]
    where $\GL[d-1](\F_q)/\mu_d(\F_q)I$ acts on $\F_q^{d-1}$ by $v^A=(\det A)Av$.
\end{lem}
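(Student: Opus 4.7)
The plan is to compute the stabilizer first inside $\SL[d](\F_q)$, decompose it as a semidirect product, and then descend to $\PSL[d](\F_q)$ by quotienting out the center.

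Fix the point $[e_1] = [1:0:\cdots:0] \in \mathbb{P}^{d-1}(\F_q)$. A matrix in $\SL[d](\F_q)$ stabilizes $[e_1]$ iff it sends $e_1$ to a scalar multiple of itself, which in $(1, d-1)$ block form means it has the shape
$$g(B,w) := \begin{pmatrix} (\det B)^{-1} & w^T \\ 0 & B \end{pmatrix}, \qquad B \in \GL[d-1](\F_q),\ w \in \F_q^{d-1},$$
where the upper-left entry is forced by the determinant condition. Denote this stabilizer by $P \le \SL[d](\F_q)$.

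Next I would exhibit $P$ as a semidirect product $P = U \rtimes L$, where $U = \set{g(I_{d-1},w) \suchthat w \in \F_q^{d-1}} \cong \F_q^{d-1}$ is the (normal) unipotent radical, and $L = \set{g(B,0) \suchthat B \in \GL[d-1](\F_q)} \cong \GL[d-1](\F_q)$ is a Levi complement. A direct block-matrix computation gives
$$g(B,0)\, g(I_{d-1},w)\, g(B,0)^{-1} \;=\; g\!\left(I_{d-1},\, (\det B)^{-1} (B^T)^{-1} w\right),$$
so $L$ acts on $U$ by $w \mapsto (\det B)^{-1} (B^T)^{-1} w$. Under the automorphism $B \mapsto A := (B^T)^{-1}$ of $\GL[d-1](\F_q)$ (which has $\det A = (\det B)^{-1}$), this action reads $v \mapsto (\det A) A v$, matching the formula in the statement.

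It remains to quotient out the center $\mu_d(\F_q) I_d$ of $\SL[d](\F_q)$. Every scalar $\alpha I_d$ with $\alpha^d = 1$ lies in $P$ and, more importantly, in $L$: namely $\alpha I_d = g(\alpha I_{d-1}, 0)$, the required identity $(\det(\alpha I_{d-1}))^{-1} = \alpha^{-(d-1)} = \alpha$ holding precisely because $\alpha^d = 1$. Since the center meets $U$ trivially, $P/\mu_d(\F_q) I_d$ is again a semidirect product $U \rtimes (L / \mu_d(\F_q) I_{d-1})$, and the $L$-action descends cleanly because for $\alpha^d = 1$ one has $(\det(\alpha A))(\alpha A) v = \alpha^d (\det A) A v = (\det A) A v$. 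The main obstacle here is purely bookkeeping: one has to (i) locate the center of $\SL[d](\F_q)$ inside the Levi factor $L$ rather than inside the unipotent radical, and (ii) reconcile the transpose/inverse appearing in the raw conjugation computation with the clean formula $v^A = (\det A) A v$ recorded in the lemma.
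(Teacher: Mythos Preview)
Your argument is correct and follows essentially the same approach as the paper: compute the stabilizer in $\SL[d](\F_q)$, write it as a semidirect product of its unipotent radical by a Levi factor isomorphic to $\GL[d-1](\F_q)$, and then quotient by the center $\mu_d(\F_q)I$, which sits entirely in the Levi and acts trivially on the unipotent part. The only cosmetic difference is that the paper stabilizes $\F_q e_d$ and writes the block form as $\begin{pmatrix} A & v \\ 0 & (\det A)^{-1}\end{pmatrix}$, so the conjugation computation gives $v \mapsto (\det A)Av$ directly without the transpose/inverse detour you needed after choosing $[e_1]$.
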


\begin{proof}
    Identify $\mathbb{P}^{d-1}(\F_q)$ with the set of $1$-dimensional subspaces of $\F_q^d$. Considering first the action of $\SL[d](\F_q)$ on $\mathbb{P}^{d-1}(\F_q)$, the stabilizer of $\F_q e_d$ is the subgroup
    \[
        H=\set{\begin{pmatrix}A & v \\ 0 & (\det A)^{-1}\end{pmatrix}\suchthat A\in\GL[d-1](\F_q),v\in\F_q^{d-1}}.
    \]
    This subgroup decomposes into the semidirect product induced by the action of $\GL[d-1](\F_q)$ on $\F_q^{d-1}$ given by $v^A=(\det A)Av$.

    Recall that $\PSL[d](\F_q)$ is defined as the quotient of $\SL[d](\F_q)$ by its center, which is $\mu_d(\F_q)I$. As $\mu_d(\F_q)I\le H$, the stabilizer of $\F_q e_d$ under the action of $\PSL[d](\F_q)$ is
    \[
        H/\mu_d(\F_q)I \cong \F_q^{d-1}\rtimes(\GL[d-1](\F_q)/\mu_d(\F_q)I).
    \]
    Note that the latter semidirect product decomposition makes sense since $\mu_d(\F_q)I$ acts trivially on $\F_q^{d-1}$ under the above action.
\end{proof}

We can now state an inductive claim, showing when one can lift the mixability of $\PSL[d-1](\F_q)$ to the mixability of $\PSL[d](\F_q)$.

\begin{prop}\label{prop:PSLd-step}
    Let $q$ be a prime-power and let $d\ge 2$. Assume that $\PSL[d-1](\F_q)$ is mixable. If $\frac{q-1}{\gcd(q-1,d)}$ is a power of $2$, then $\PSL[d](\F_q)$ is mixable.
\end{prop}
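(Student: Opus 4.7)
The plan is to combine \Cref{cor:2-transitive} with \Pref{prop:mix-spe-semi}, reducing the mixability of $\PSL[d](\F_q)$ first to the stabilizer of a point in $\mathbb{P}^{d-1}(\F_q)$ and then to a smaller matrix quotient, which is handled via \Pref{prop:matrix-rels}. The natural action of $\PSL[d](\F_q)$ on $\mathbb{P}^{d-1}(\F_q)$ is $2$-transitive, so by \Cref{cor:2-transitive} it suffices to show the point stabilizer is mixable. By \Lref{lem:PSL-stab} this stabilizer is
\[
H \cong \F_q^{d-1} \rtimes Q, \qquad Q := \GL[d-1](\F_q)/\mu_d(\F_q)I,
\]
with $Q$ acting on $N = \F_q^{d-1}$ via $v^A = (\det A)\, Av$.

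To mix $H$ I would apply \Pref{prop:mix-spe-semi}, whose two hypotheses are (i) $Q$ is mixable and (ii) the action of $Q$ on $N$ has exactly two orbits. For (ii), given nonzero $v, w \in N$, choose $A \in \SL[d-1](\F_q)$ with $Av = w$ (possible since $\SL[d-1](\F_q)$ is transitive on nonzero vectors for $d - 1 \ge 2$); then $v^A = (\det A)\, w = w$. Since the scalar subgroup $\mu_d(\F_q) I$ acts trivially (because $\lambda^{d-1}\cdot \lambda = \lambda^d = 1$ for $\lambda \in \mu_d(\F_q)$), the orbits descend to $Q$, giving exactly the two orbits $\{0\}$ and $N\setminus\{0\}$. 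The problem thereby reduces to showing $Q$ is mixable.

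For $Q$, the natural surjection onto $\PGL[d-1](\F_q) = \GL[d-1](\F_q)/\mul{\F_q}I$ has kernel $\mul{\F_q}I/\mu_d(\F_q)I$, cyclic of order $(q-1)/\gcd(q-1,d)$. By hypothesis this is a power of $2$, hence mixable. By the extension closure \Cref{cor:N-and-G/N}, mixability of $Q$ follows from mixability of $\PGL[d-1](\F_q)$, which in turn follows from item~(4) of \Pref{prop:matrix-rels} applied to the inductively mixable $\PSL[d-1](\F_q)$, provided $\gcd(q-1, d-1)$ is a power of $2$.

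The main obstacle --- really the only nontrivial point --- is verifying this last number-theoretic compatibility. Writing $q-1 = 2^a n$ with $n$ odd, the hypothesis that $(q-1)/\gcd(q-1,d)$ is a power of $2$ forces $n/\gcd(n,d)$ to be simultaneously odd and a power of $2$, hence equal to $1$, so $n \mid d$. Since $\gcd(d, d-1) = 1$, this yields $\gcd(n, d-1) = 1$, and therefore $\gcd(q-1, d-1)$ is a power of $2$, as required.
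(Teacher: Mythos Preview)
Your argument is essentially the paper's own proof: reduce via \Cref{cor:2-transitive} to the stabilizer $H=\F_q^{d-1}\rtimes Q$ from \Lref{lem:PSL-stab}, verify the two-orbit hypothesis of \Pref{prop:mix-spe-semi} using transitivity of $\SL[d-1](\F_q)$ on nonzero vectors, and mix $Q$ via the extension $1\to\mul{\F_q}/\mu_d(\F_q)\to Q\to\PGL[d-1](\F_q)\to 1$ together with \Pref{prop:matrix-rels}(4). Your number-theoretic check that $\gcd(q-1,d-1)$ is a power of $2$ (via $q-1=2^an$, $n\mid d$, hence $\gcd(n,d-1)=1$) is a clean restatement of the paper's one-line coprimality observation.

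There is, however, a gap at $d=2$ --- one you half-flag with the parenthetical ``for $d-1\ge 2$'' but do not resolve, and which the paper's proof shares. For $d=2$ the action $v^A=A^2v$ of $Q=\mul{\F_q}/\mu_2(\F_q)$ on $\F_q\setminus\{0\}$ is \emph{not} transitive for odd $q$ (the orbits are the square classes), so \Pref{prop:mix-spe-semi} does not apply. Worse, the statement itself fails at $d=2$, $q=3$: the hypotheses hold trivially ($(q-1)/\gcd(q-1,2)=1$), yet $\PSL[2](\F_3)\cong\Alt{4}$, which the paper notes elsewhere is not mixable. So your argument, like the paper's, is valid for $d\ge 3$; the case $d=2$ must be excluded from the statement or handled separately.
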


\begin{proof}
    Since $\gcd(q-1,d-1)$ and $\gcd(q-1,d)$ are coprime, it follows that $\gcd(q-1,d-1)|\frac{q-1}{\gcd(q-1,d)}$, so $\gcd(q-1,d-1)$ is a power of $2$. Therefore, the group $\PGL[d-1](\F_q)$ is mixable by \Pref{prop:matrix-rels}.

    We use the $2$-transitive action of $\PSL[d](\F_q)$ on $\mathbb{P}^{d-1}(\F_q)$. By \Lref{lem:PSL-stab}, the stabilizer of this action can be written as a semidirect product $H=\F_q^{d-1}\rtimes(\GL[d-1](\F_q)/\mu_d(\F_q)I)$, where $\GL[d-1](\F_q)/\mu_d(\F_q)I$ acts on $\F_q^{d-1}$ by $v^A=(\det A)Av$. We prove that $H$ is mixable, which will show that $\PSL[d](\F_q)$ is mixable by \Cref{cor:2-transitive}.

    We first check that $\GL[d-1](\F_q)/\mu_d(\F_q)I$ is mixable. Consider the short exact sequence
    \[
    \begin{tikzcd}[column sep=small]
        1 \arrow[r] & \mul{\F_q}/\mu_d(\F_q) \arrow[r] & \GL[d-1](\F_q)/\mu_d(\F_q)I \arrow[r] & {\PGL[d-1](\F_q)} \arrow[r] & 1.
    \end{tikzcd}
    \]
    The group $\mul{\F_q}/\mu_d(\F_q)$ is of order $\frac{q-1}{\gcd(q-1,d)}$, and thus it is mixable as a $2$-group. Since $\PGL[d-1](\F_q)$ is mixable as well, it follows that $\GL[d-1](\F_q)/\mu_d(\F_q)I$ is mixable. Next, the action of $\GL[d-1](\F_q)/\mu_d(\F_q)I$ on $\F_q^{d-1}\setminus\set{0}$ is transitive, since $\SL[d-1](\F_q)\hookrightarrow\GL[d-1](\F_q)/\mu_d(\F_q)I$ acts transitively on $\F_q^{d-1}\setminus\set{0}$. Therefore $H$ is mixable by \Pref{prop:mix-spe-semi}.
\end{proof}

For some values of $q$, this is enough to prove mixability for all $d$:

\begin{cor}\label{cor:matrix-all-d}
    If $q-1$ is a power of $2$ (including the case $q=2$), then $\PSL[d](\F_q)$, $\SL[d](\F_q)$ and $\PGL[d](\F_q)$ are mixable for all $d\ge 1$.
\end{cor}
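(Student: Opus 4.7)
The plan is to prove the corollary by an induction on $d$ which feeds directly into \Pref{prop:PSLd-step}, and then to extract the mixability of $\SL[d](\F_q)$ and $\PGL[d](\F_q)$ from \Pref{prop:matrix-rels}.

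First I would take care of the base case $d=1$: the group $\PSL[1](\F_q)$ is trivial, hence vacuously mixable. For the inductive step, suppose $\PSL[d-1](\F_q)$ is mixable. The hypothesis of \Pref{prop:PSLd-step} requires that $\frac{q-1}{\gcd(q-1,d)}$ be a power of $2$. Writing $q-1 = 2^m$, we have $\gcd(q-1,d) = 2^{\min(m,\nu_2(d))}$, where $\nu_2(d)$ denotes the $2$-adic valuation of $d$. Thus
\[
    \frac{q-1}{\gcd(q-1,d)} = 2^{\,m - \min(m,\nu_2(d))},
\]
which is again a power of $2$. So \Pref{prop:PSLd-step} applies, yielding that $\PSL[d](\F_q)$ is mixable.

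To pass from $\PSL[d](\F_q)$ to $\SL[d](\F_q)$ and $\PGL[d](\F_q)$, I would invoke \Pref{prop:matrix-rels}(4). That result requires $\gcd(q-1,d)$ to be a power of $2$, and this is immediate because $q-1$ itself is a power of $2$, so any divisor of $q-1$ is as well. One could optionally also note that \Pref{prop:matrix-rels}(5) then gives mixability of $\GL[d](\F_q)$, consistent with \Cref{cor:GLd}, but this is not part of the statement.

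The proof is essentially bookkeeping: no further obstacles arise, since all the hard work has been done in establishing \Pref{prop:PSLd-step} (which built on \Cref{cor:2-transitive} and \Pref{prop:mix-spe-semi}) and \Pref{prop:matrix-rels}. The only step that requires a small verification is the arithmetic observation that a power of $2$ stays a power of $2$ under the quotient $\frac{q-1}{\gcd(q-1,d)}$, which is the identity displayed above. The base case $q=2$ is not special in the argument; it is simply the case $m=0$, where both $q-1$ and $\gcd(q-1,d)$ equal $1$.
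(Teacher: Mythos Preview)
Your proof is correct and matches the paper's intended argument: the corollary is stated without proof in the paper, as it follows immediately from \Pref{prop:PSLd-step} by induction on $d$ together with \Pref{prop:matrix-rels}(4), exactly as you have written.
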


\begin{rem}
We identify these prime-powers $q$ such that $q-1$ is a power of $2$. For a prime-power $q$, $q-1$ is a power of $2$ if and only if $q = 2$, $q = 9$ or $q$ is a Fermat prime (\eg\ $3$, $5$, $17$, $257$, $65337$, and conjecturally no other prime).

This follows from the proven Catalan conjecture (that the only solution to $x^a-y^b=1$ for $a,b>1$ is $3^2-2^3=1$) \cite{Preda04}.
\end{rem}

\begin{cor}\label{cor:GLd}
  The general linear group $\GL[d](\F_q)$ is mixable if and only if $q-1$ is a power of $2$.
\end{cor}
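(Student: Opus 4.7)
The plan is to simply assemble two implications from the preceding structural results, so the proof is short.

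For the forward direction, assume $\GL[d](\F_q)$ is mixable. Part~(3) of \Pref{prop:matrix-rels} was stated precisely so that the conclusion $q-1$ is a power of $2$ drops out immediately. (The underlying reason, which was proven in that proposition via the right-hand exact column of the commuting diagram, is that $\mul{\F_q} \cong \GL[d](\F_q)/\SL[d](\F_q)\cdot\Zent$ would have to inherit mixability as a quotient of $\GL[d](\F_q)$, and mixable abelian groups are $2$-groups by \Cref{cor:abelian}.)

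For the backward direction, assume $q-1$ is a power of $2$. Then \Cref{cor:matrix-all-d} supplies the mixability of $\PGL[d](\F_q)$ for every $d\ge 1$. Now part~(5) of \Pref{prop:matrix-rels} applies: whenever $\PGL[d](\F_q)$ is mixable and $q-1$ is a power of $2$, the group $\GL[d](\F_q)$ is mixable. Concretely, this is because $\GL[d](\F_q)$ sits in the short exact sequence $1\to\mul{\F_q}\to\GL[d](\F_q)\to\PGL[d](\F_q)\to 1$, whose kernel is the cyclic group $\mul{\F_q}$ of $2$-power order, hence mixable by \Cref{cor:2-groups}, so that \Cref{cor:N-and-G/N} closes the argument.

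There is no real obstacle to be anticipated: the entire content has already been packaged into \Pref{prop:matrix-rels} and \Cref{cor:matrix-all-d}, and the present corollary is simply the symmetric combination of items~(3) and~(5) of that proposition, fed by the mixability of $\PGL[d](\F_q)$. One should just be careful to note that the hypothesis $q-1$ a power of $2$ forces $\gcd(q-1,d)$ to be a power of $2$ as well, so that \Cref{cor:matrix-all-d} indeed applies for every $d$.
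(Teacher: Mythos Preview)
Your proposal is correct and essentially identical in spirit to the paper's proof: both directions rest on \Cref{cor:matrix-all-d} together with the closure of mixability under quotients and extensions. The only cosmetic difference is that the paper uses the middle \emph{column} of the diagram, i.e.\ the sequence $1\to\SL[d](\F_q)\to\GL[d](\F_q)\to\mul{\F_q}\to 1$, whereas you use the middle \emph{row} $1\to\mul{\F_q}\to\GL[d](\F_q)\to\PGL[d](\F_q)\to 1$; either choice works.

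One small slip worth cleaning up: in your parenthetical explanation of the forward direction, the isomorphism should read $\mul{\F_q}\cong\GL[d](\F_q)/\SL[d](\F_q)$, with no ``$\cdot\,\Zent$'' (adjoining the center would give the smaller quotient $\mul{\F_q}/(\mul{\F_q})^d$), and this sits in the middle column of the diagram rather than the right-hand one. This does not affect the argument, since you are in any case simply invoking \Pref{prop:matrix-rels}(3).
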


This shows, for instance, that $\PSL[2](\F_7)\cong\GL[3](\F_2)$ is mixable, which is also not covered by \Pref{prop:PSLd-step}.

\begin{proof}
  Recall that $\GL[d](\F_q)/\SL[d](\F_q)\cong\mul{\F_q}$. By the above proposition and \Cref{cor:abelian}, both $\SL[d](\F_q)$ and $\mul{\F_q}$ are mixable if and only if $q-1$ is a power of $2$, proving the corollary.
\end{proof}

\begin{rem}\label{rem:PSL-not-nec}
    \Pref{prop:PSLd-step} does not provide a necessary criterion for the mixability of $\PSL[d](\F_q)$. Indeed, $\PSL[2](\F_4)\cong\Alt{5}$ is mixable even though $\frac{4-1}{\gcd(4-1,2)}=3$ is not a power of $2$.

    Knowing that $\PSL[2](\F_4)$ is mixable, \Pref{prop:PSLd-step} tells us that $\PSL[3](\F_4)$ is mixable since $\frac{4-1}{\gcd(4-1,3)}=1$, but does not show whether $\PSL[4](\F_4)$ is mixable or not.
\end{rem}

\subsection{Sporadic simple groups}\label{sec:12}

We next utilize our machinery to study the mixability of some $2$-transitive sporadic simple groups -- the Mathieu groups and the Higman--Sims group.

A \textbf{Steiner system} $S(t,k,v)$ is a set $X$ of $v$ points, and a collection of subsets of $X$ of size $k$ (called \textbf{blocks}), such that any $t$ points of $X$ are in exactly one of the blocks. The \textbf{Mathieu groups} were introduced by Mathieu \cite{Mathieu61,Mathieu73} as the automorphism groups of certain Steiner systems, and were the first examples of sporadic simple groups to be discovered. These groups act multiply transitive on the set of points of the appropriate Steiner system. See Table~\ref{tab:mathieu-groups} for the list of the five simple Mathieu groups, their corresponding Steiner system, and their transitivity order.

\begin{table}[ht]
    \centering
    \begin{tabular}{|c|c|c|c|c|}
      \hline
      Group & Steiner system & Transitivity & Order & Point stabilizer \\\hline
      $M_{11}$ & $S(4,5,11)$ & $4$ & $2^4\cdot 3^2\cdot 5\cdot 11$ & $\Alt{6}$ \\ \hline
      $M_{12}$ & $S(5,6,12)$ & $5$ & $2^6\cdot 3^3\cdot 5\cdot 11$ & $M_{11}$ \\ \hline
      $M_{22}$ & $S(3,6,22)$ & $3$ & $2^7\cdot 3^2\cdot 5\cdot 7\cdot 11$ & $\PSL[3](\F_4)$ \\ \hline
      $M_{23}$ & $S(4,7,23)$ & $4$ & $2^7\cdot 3^2\cdot 5\cdot 7\cdot 11\cdot 23$ & $M_{22}$ \\ \hline
      $M_{24}$ & $S(5,8,24)$ & $5$ & $2^{10}\cdot 3^3\cdot 5\cdot 7\cdot 11\cdot 23$ & $M_{23}$ \\ \hline
    \end{tabular}

    \caption{Definition and properties of the Mathieu groups.}
    \label{tab:mathieu-groups}
\end{table}

\begin{cor}\label{cor:mathieu}
  The five Mathieu groups $M_{11},M_{12},M_{22},M_{23},M_{24}$ are mixable.
\end{cor}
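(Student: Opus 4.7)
The plan is to exploit the chain structure of point stabilizers in Table~\ref{tab:mathieu-groups} together with \Cref{cor:2-transitive}. Since each Mathieu group acts at least $3$-transitively on its Steiner system, each such action is in particular $2$-transitive, so it suffices to verify that the listed point stabilizers are mixable. Once we do this, \Cref{cor:2-transitive} lifts mixability from stabilizer to group, and the fact that the stabilizers themselves form a chain $\Alt{6} \le M_{11} \le M_{12}$ and $\PSL[3](\F_4) \le M_{22} \le M_{23} \le M_{24}$ means that mixability propagates up automatically.

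First I would handle the $M_{11}$--chain. The stabilizer of a point in $M_{11}$ is $\Alt{6}$, which is mixable by \Cref{cor:alternating} since $6 \ge 5$. Applying \Cref{cor:2-transitive} to the $4$-transitive action of $M_{11}$ on $11$ points gives that $M_{11}$ is mixable. Then applying \Cref{cor:2-transitive} to the $5$-transitive action of $M_{12}$ on $12$ points (whose point stabilizer is $M_{11}$) gives that $M_{12}$ is mixable.

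Next I would handle the $M_{22}$--chain, where the only new ingredient is that $\PSL[3](\F_4)$ is mixable. This is precisely \Rref{rem:PSL-not-nec}: $\PSL[2](\F_4) \cong \Alt{5}$ is mixable by \Cref{cor:alternating}, and then \Pref{prop:PSLd-step} applied with $d = 3$, $q = 4$ yields the mixability of $\PSL[3](\F_4)$ since $\tfrac{q-1}{\gcd(q-1,d)} = \tfrac{3}{3} = 1$ is a power of $2$. With $\PSL[3](\F_4)$ in hand, the $3$-transitive action of $M_{22}$ on $22$ points with stabilizer $\PSL[3](\F_4)$ yields the mixability of $M_{22}$ via \Cref{cor:2-transitive}; the $4$-transitive action of $M_{23}$ with stabilizer $M_{22}$ lifts this to $M_{23}$; and the $5$-transitive action of $M_{24}$ with stabilizer $M_{23}$ lifts it to $M_{24}$.

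The main obstacle in this argument is really the identification and mixability of $\PSL[3](\F_4)$, which has already been handled in the previous section. Everything else is a direct application of the $2$-transitive criterion, so no delicate computation is needed. As a bonus, chaining \Cref{cor:2-transitive} through the table also yields explicit upper bounds on the mixing lengths $\mixlen(M_{11}),\dots,\mixlen(M_{24})$ in terms of $\mixlen(\Alt{6})$ and $\mixlen(\PSL[3](\F_4))$ and the sizes of the respective point sets.
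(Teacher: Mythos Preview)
Your proposal is correct and follows essentially the same argument as the paper's proof: both invoke \Cref{cor:2-transitive} repeatedly along the stabilizer chains $\Alt{6} \to M_{11} \to M_{12}$ and $\PSL[3](\F_4) \to M_{22} \to M_{23} \to M_{24}$, citing \Cref{cor:alternating} and \Rref{rem:PSL-not-nec} for the base cases. Your write-up simply fills in a bit more detail (the transitivity degrees and the reason $\PSL[3](\F_4)$ is mixable) than the paper's terse version.
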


\begin{proof}
  As mentioned above, all five Mathieu groups act $2$-transitively on the set of points of their corresponding Steiner system. We apply \Cref{cor:2-transitive} to all five groups: $\Alt{6}$ and $\PSL[3](\F_4)$ are mixable by \Cref{cor:alternating} and \Rref{rem:PSL-not-nec} respectively, hence $M_{11}$ and $M_{22}$ are mixable; this shows that $M_{12}$ and $M_{23}$ are mixable as well; and this in turn proves the mixability of $M_{24}$.
\end{proof}

Another example of a sporadic $2$-transitive group is the \textbf{Higman--Sims group} $\HS$, found by Higman and Sims \cite{HigmanSims68}. This group is a subgroup of index $2$ in the automorphism group of the Higman--Sims graph (a certain $22$-regular graph with $100$ vertices), and its action on the graph is $2$-transitive. Since the point stabilizer in this action is isomorphic to the Mathieu group $M_{22}$, we may apply \Cref{cor:2-transitive} and get:

\begin{cor}\label{cor:higman-sims}
The Higman--Sims group $\HS$ is mixable.
\end{cor}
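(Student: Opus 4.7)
The plan is to obtain this as a direct consequence of \Cref{cor:2-transitive}, exactly mirroring the argument just given for the Mathieu groups in the proof of \Cref{cor:mathieu}. The two ingredients we need are a suitable 2-transitive action of $\HS$ and the identification of its point stabilizer as a group whose mixability has already been established.

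First I would recall the classical fact, going back to \cite{HigmanSims68}, that $\HS$ acts 2-transitively on the $100$-vertex set $X$ of the Higman--Sims graph, and that the stabilizer of a vertex under this action is isomorphic to the Mathieu group $M_{22}$. Next, I would invoke \Cref{cor:mathieu}, which we have just established, to conclude that $M_{22}$ is mixable.

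Combining these two facts, \Cref{cor:2-transitive} applies and immediately yields the mixability of $\HS$. As a by-product, the same corollary furnishes the mixing length bound
\[
    \mixlen(\HS) \le \mixlen(M_{22}) + \mixlen(M_{22}, X \setminus \set{x}) + 1
\]
for any base point $x \in X$. There is no real obstacle here: all the genuine content of the argument was already absorbed into the mixability of $M_{22}$ and into the general 2-transitive criterion of \Cref{cor:2-transitive}.
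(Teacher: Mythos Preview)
Your proposal faithfully mirrors the paper's own argument: invoke the action of $\HS$ on the $100$ vertices of the Higman--Sims graph, identify the point stabilizer as $M_{22}$ (mixable by \Cref{cor:mathieu}), and apply \Cref{cor:2-transitive}. In that sense you have reproduced exactly what the paper does.

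There is, however, a genuine gap that your write-up inherits from the paper: the action of $\HS$ on the $100$ vertices of the Higman--Sims graph is \emph{not} $2$-transitive. The Higman--Sims graph is a nontrivial strongly regular graph, and no automorphism group of such a graph can act $2$-transitively on its vertex set, since a $2$-transitive action would make all vertex pairs equivalent and force the graph to be complete or edgeless. Concretely, the point stabilizer $M_{22}$ has three orbits on the $100$ vertices (the fixed vertex, its $22$ neighbours, and its $77$ non-neighbours), so the permutation action has rank $3$, not rank $2$. The group $\HS$ does possess a genuine $2$-transitive action, but it is of degree $176$ with point stabilizer $\mathrm{PSU}_3(5){:}2$, whose mixability has not been established in the paper. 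Thus the hypothesis of \Cref{cor:2-transitive} is not met by the $100$-point action you (and the paper) invoke, and some further argument is required before the mixability of $\HS$ can be concluded.
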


\bibliographystyle{plain}
\bibliography{mix_refs}

\begin{thebibliography}{10}

\bibitem{AH}
Omer Angel and Alexander~E. Holroyd.
\newblock Perfect shuffling by lazy swaps.
\newblock {\em Electron. Commun. Probab.}, 23:Paper No. 47, 11, 2018.

\bibitem{CoxeterMoser80}
Harold S.~M. Coxeter and William O.~J. Moser.
\newblock {\em Generators and relations for discrete groups}, volume~14 of {\em
  Ergebnisse der Mathematik und ihrer Grenzgebiete [Results in Mathematics and
  Related Areas]}.
\newblock Springer-Verlag, Berlin-New York, fourth edition, 1980.

\bibitem{DiaconisShahshahani86}
Persi Diaconis and Mehrdad Shahshahani.
\newblock On square roots of the uniform distribution on compact groups.
\newblock {\em Proc. Amer. Math. Soc.}, 98(2):341--348, 1986.

\bibitem{ErdosRenyi65}
Paul Erd\H{o}s and Alfr\'ed R\'enyi.
\newblock Probabilistic methods in group theory.
\newblock {\em J. Analyse Math.}, 14:127--138, 1965.

\bibitem{Segev}
George Glauberman, Avinoam Mann, and Yoav Segev.
\newblock A note on groups generated by involutions and sharply 2-transitive
  groups.
\newblock {\em Proc. Amer. Math. Soc.}, 143(5):1925--1932, 2015.

\bibitem{GroenlandJohnstonRadcliffeScott22}
Carla Groenland, Tom Johnston, Jamie Radcliffe, and Alex Scott.
\newblock Perfect shuffling with fewer lazy transpositions.
\newblock {\em arXiv preprint arXiv:2208.06629}, 2022.

\bibitem{2gen}
L.~Grunenfelder, T.~Ko\v{s}ir, Omladi\v{c} M., and H.~Radjavi.
\newblock On groups generated by elements of prime order.
\newblock {\em Geometriae Dedicata}, 75:1317--332, 1999.

\bibitem{HallPuderSawin18}
Chris Hall, Doron Puder, and William~F. Sawin.
\newblock Ramanujan coverings of graphs.
\newblock {\em Adv. Math.}, 323:367--410, 2018.

\bibitem{HigmanSims68}
Donald~G. Higman and Charles~C. Sims.
\newblock A simple group of order {$44,352,000$}.
\newblock {\em Math. Z.}, 105:110--113, 1968.

\bibitem{Humphreys90}
James~E. Humphreys.
\newblock {\em Reflection groups and {C}oxeter groups}, volume~29 of {\em
  Cambridge Studies in Advanced Mathematics}.
\newblock Cambridge University Press, Cambridge, 1990.

\bibitem{JanzerJohnsonLeader23}
Barnab\'as Janzer, J.~Robert Johnson, and Imre Leader.
\newblock Partial shuffles by lazy swaps.
\newblock {\em SIAM J. Discrete Math.}, 37(4):2544--2557, 2023.

\bibitem{Mathieu61}
{\'E}mile Mathieu.
\newblock M{\'e}moire sur l'{\'e}tude des fonctions de plusieurs quantit{\'e}s,
  sur la mani{\`e}re de les former et sur les substitutions qui les laissent
  invariables.
\newblock {\em Journal de math{\'e}matiques pures et appliqu{\'e}es},
  6:241--323, 1861.

\bibitem{Mathieu73}
{\'E}mile Mathieu.
\newblock Sur la fonction cinq fois transitive de 24 quantit{\'e}s.
\newblock {\em Journal de math{\'e}matiques pures et appliqu{\'e}es},
  18:25--46, 1873.

\bibitem{Preda04}
Preda Mih\u{a}ilescu.
\newblock Primary cyclotomic units and a proof of {C}atalan's conjecture.
\newblock {\em J. Reine Angew. Math.}, 572:167--195, 2004.

\bibitem{Serre}
Jean-Pierre Serre.
\newblock {\em Linear representations of finite groups}, volume Vol. 42 of {\em
  Graduate Texts in Mathematics}.
\newblock Springer-Verlag, New York-Heidelberg, french edition, 1977.

\bibitem{UV98}
Uzi Vishne.
\newblock Mixing and covering in the symmetric groups.
\newblock {\em Journal of Algebra}, 205(1):119--140, 1998.

\end{thebibliography}

\end{document}